\documentclass[11pt]{amsart}
\usepackage{amsmath}
\usepackage{amssymb}
\usepackage{amscd}
\usepackage{amsthm}
\usepackage{enumerate}
\usepackage{fullpage}
\usepackage{mathrsfs}
\usepackage{setspace}
\usepackage{mathtools}
\usepackage{xcolor}
\usepackage[all]{xy}
\usepackage{url}
\usepackage[colorlinks=true,linkcolor=blue,breaklinks,pagebackref]{hyperref}
\usepackage[lite,abbrev,alphabetic]{amsrefs}
\usepackage[poorman]{cleveref}
\usepackage{bbm}
\usepackage{float} 

\usepackage{booktabs,longtable}

\usepackage{tikz}
\usetikzlibrary{calc}

\theoremstyle{definition}
\newtheorem{definition}{Definition}[section]

\theoremstyle{plain}
\newtheorem{theorem}[definition]{Theorem}
\newtheorem{proposition}[definition]{Proposition}
\newtheorem{lemma}[definition]{Lemma}
\newtheorem{corollary}[definition]{Corollary}

\newtheorem*{theorem*}{Theorem}

\theoremstyle{remark}
\newtheorem{remark}[definition]{Remark}

\crefname{theorem}{Theorem}{Theorems}
\crefname{proposition}{Proposition}{Propositions}
\crefname{lemma}{Lemma}{Lemmas}
\crefname{corollary}{Corollary}{Corollaries}
\crefname{conjecture}{Conjecture}{Conjectures}
\crefname{hypothesis}{Hypothesis}{Hypotheses}
\crefname{remark}{Remark}{Remarks}
\crefname{condition}{Condition}{Conditions}
\crefname{example}{Example}{Examples}

\def\N{\mathbb{N}}
\def\C{\mathbb{C}}
\def\R{\mathbb{R}}
\def\Q{\mathbb{Q}}
\def\Z{\mathbb{Z}}
\def\A{\mathbb{A}}

\def\GL{\mathrm{GL}}

\def\SL{\mathrm{SL}}
\def\PSL{\mathrm{PSL}}

\def\M{\mathrm{M}}

\def\rd{\,\mathrm{d}}

\def\inf{\infty}

\def\bs{\backslash}

\def\1{\mathbf{1}}
\def\0{\mathbf{0}}

\DeclareMathOperator{\vol}{vol}

\DeclareMathOperator{\tr}{tr}
\DeclareMathOperator{\id}{id}

\newcommand{\fa}{\mathfrak{a}}
\newcommand{\fA}{\mathfrak{A}}

\newcommand{\fD}{\mathfrak{D}}

\newcommand{\ff}{\mathfrak{f}}
\newcommand{\fF}{\mathfrak{F}}

\newcommand{\fh}{\mathfrak{h}}

\newcommand{\fo}{\mathfrak{o}}
\newcommand{\fp}{\mathfrak{p}}

\newcommand{\fL}{\mathfrak{L}}

\newcommand{\fT}{\mathfrak{T}}

\newcommand{\fX}{\mathfrak{X}}

\newcommand{\bT}{\mathbb{T}}

\newcommand{\cD}{\mathcal{D}}

\newcommand{\cH}{\mathcal{H}}
\newcommand{\cI}{\mathcal{I}}

\newcommand{\cL}{\mathcal{L}}

\newcommand{\cP}{\mathcal{P}}

\newcommand{\rG}{\mathrm{G}}

\numberwithin{equation}{section}

\begin{document}

\title{The Eichler--Selberg trace formula for Hilbert cusp forms, the class numbers of quartic CM fields, and their distributions}
\author{Seiji Kuga}
\author{Andrei Seymour-Howell}
\author{Satoshi Wakatsuki}
\subjclass[2020]{primary 11F41, secondary 11F72}
\address{Faculty of Science and Technology, Sophia University, Kioi-cho 7-1 Chiyoda-ku Tokyo, 102-8554, Japan}
\email{s-kuga-2g7@sophia.ac.jp}
\address{Department of Mathematics Education, Chonnam National University, Gwangju 61186,
Republic of Korea}
\email{aseymourhowell@jnu.ac.kr}
\address{Faculty of Mathematics and Physics, Institute of Science and Engineering, Kanazawa University, Kakumamachi, Kanazawa, Ishikawa, 920-1192, Japan}
\email{wakatsuk@staff.kanazawa-u.ac.jp}

\date{\today}

\begin{abstract}
Motivated by Su's construction of Cohen-type Eisenstein series of half-integral weight over totally real number fields \cite{Su16}, we introduce a generalization of Hurwitz class numbers to totally real number fields.
Using these generalized Hurwitz class numbers, we establish an Eichler--Selberg trace formula for the space of holomorphic Hilbert cusp forms over real quadratic fields of narrow class number one.
While the classical Hurwitz class numbers are defined in terms of class numbers of imaginary quadratic fields, the generalized Hurwitz class numbers appearing in our Eichler--Selberg trace formula are defined in terms of class numbers of quartic CM fields.
For applications of this Eichler--Selberg trace formula, we study the distribution of the generalized Hurwitz class numbers, prove class number relations, and carry out numerical computations of traces of Hecke operators for $\Q(\sqrt{5})$ and $\Q(\sqrt{29})$.
\end{abstract}

\maketitle
\tableofcontents

\section{Introduction and main results}\label{sec:intro}

In this paper, we establish an Eichler--Selberg trace formula for the space of holomorphic Hilbert cusp forms over real quadratic fields of narrow class number one.
For applications, we study the distribution and class number relations of generalized Hurwitz class numbers associated with quartic CM fields, and carry out numerical computations of traces of Hecke operators.
In this section, we first review the classical Hurwitz class numbers and the Eichler--Selberg trace formula.
We then introduce the generalized Hurwitz class numbers, and proceed to the Eichler--Selberg trace formula for spaces of holomorphic Hilbert cusp forms and the distribution of the generalized Hurwitz class numbers.

\subsection{The classical Hurwitz class numbers}
Before stating our main results, we first recall the classical Hurwitz class numbers and the Eichler--Selberg trace formula in the usual setting.
For a natural number $n$, the Hurwitz class number $H(n)$ is defined as the weighted number of $\PSL_2(\Z)$-equivalence classes of positive definite integral binary quadratic forms of discriminant $-n$, where each class is counted with weight equal to the reciprocal of the order of its stabilizer.
If $-n=df^2$ for a fundamental discriminant $d<0$ and a natural number $f$, then for $E=\Q(\sqrt{d})$ we have
\begin{equation}\label{eq:1}
H(n)=h_{E} \, [\fo_E^\times:\Z^\times]^{-1} \, \sum_{u\mid f} u \prod_{p\mid u}(1-(\tfrac{d}{p})\, p^{-1})    
\end{equation}
and $H(n)=0$ otherwise, cf. \cite{Coh75}.
Here $h_E$ denotes the class number of the imaginary quadratic field $E$, $\fo_E$ its ring of integers, $(\tfrac{d}{p})$ the Kronecker symbol, $u$ runs over all positive divisors of $f$, and $p$ runs over all prime divisors of $u$.
In particular, if $-n>4$ is a fundamental discriminant, then $H(n)$ coincides with $h_E/2$.
Moreover, for $n<0$ we set $H(n)=0$, and we define
\begin{equation}\label{eq:2}
H(0)\coloneqq\zeta(-1)=-\frac{1}{12}    
\end{equation}
in terms of the Riemann zeta function $\zeta(s)$.
Let $S_k(\SL_2(\Z))$ denote the space of holomorphic cusp forms of weight $k$ for $\SL_2(\Z)$, and for a natural number $n$, let $T_n$ denote the $n$-th Hecke operator.
Then, for even integers $k\ge 2$, the Eichler--Selberg trace formula
\begin{equation}\label{eq:3}
\tr T_n|_{S_k(\SL_2(\Z))}=-\frac{1}{2}\sum_{t\in\Z}P_k(t,n)\, H(4n-t^2)-\frac{1}{2}\sum_{uu'=n}\min(u,u')^{k-1}+\begin{cases}
 \sum_{a\mid n}a  &  \text{if $k=2$}, \\
   0 & \text{otherwise,}
\end{cases}
\end{equation}
holds; see e.g. \cites{Eic55}, \cite{Sel56}, \cite{Lan76}*{Ch.3, Appendix by D. Zagier}, and \cite{Zag77} (see \cite{Eic57} for weight $2$).
Here we set
\[
P_k(t,n)\coloneqq \frac{\rho^{k-1}-\overline{\rho}^{k-1}}{\rho-\overline{\rho}}, \quad \rho+\overline{\rho}=t, \quad \rho\overline{\rho}=n.
\]
It follows from \eqref{eq:3} that $\dim S_k(\SL_2(\Z))=0$ for $k=2$, $4$, $6$, $8$, $10$, and $14$.
Hence, for these weights, the left-hand side of \eqref{eq:3} is always $0$, and then, as pointed out in \cite{Eic55}*{Added at proof}, the class number relations of Kronecker \cite{Kro60}, Hurwitz \cite{Hur85}, and Eichler \cite{Eic55}*{(10)} follow from \eqref{eq:3}.
Moreover, as shown in \cites{HZ76,Zag75}, the generating function of $H(n)$ is a mock modular form of weight $3/2$.
For recent developments on class number relations and mock modular forms, see \cite{Mer14}.

\subsection{The generalized Hurwitz class numbers}
From the viewpoint of the expression \eqref{eq:1}, one can expect an appropriate generalization of Hurwitz class numbers from the Hilbert analogue of the Cohen Eisenstein series given in \cite{Su16}.
Let $F\ne\Q$ be a totally real number field and denote by $\fo_F$ the integer ring of $F$. For $\kappa\in\Z_{\ge1}$ and a character $\chi$ of the ideal class group of $F$, Su constructed a Hilbert modular form of half-integral parallel weight $(\kappa+\tfrac{1}{2},\cdots,\kappa+\tfrac{1}{2})$ \cite{Su16}*{Theorem 0.1}, which is denoted by $G_{\kappa+\frac{1}{2}}(\cdot,\chi)$ and is a generalization of the Cohen Eisenstein series. For a totally positive element $\xi\in\fo_F$ being congruent to $(-1)^{\kappa}$ times some square in $\fo_F$ modulo $4$, the $\xi$th Fourier coefficient of $G_{\kappa+\frac{1}{2}}(\cdot,\chi)$ is given by
\begin{align*}
C_F^{\kappa,\chi}(\xi):=&\chi(\fD_{(-1)^{\kappa}\xi})L_F(1-\kappa,\overline{\fX_{(-1)^{\kappa}\xi}\chi})\\
&\times
\sum_{\substack{\fa\subset\fo_F:\text{ideal}\\
\fa|\fF_{(-1)^{\kappa}\xi}}}\mu(\fa)\fX_{(-1)^{\kappa}\xi}(\fa)\chi(\fa) \, N_{F/\Q}(\fa)^{\kappa-1} \, \sigma_{2\kappa-1,\chi}(\fF_{(-1)^{\kappa}\xi} \, \fa^{-1}),
\end{align*}
where $\mu$ is the M\"{o}bius function, $\fD_{\xi}$ is the relative discriminant of $F(\sqrt{\xi})/F$, $\fX_{\xi}$ is the quadratic character on the ideal group of $F$ corresponding to $F(\sqrt{\xi})/F$ via the global class field theory, $\fF_{\xi}$ is the integral ideal defined by $(\xi)=\fD_{\xi}\fF_{\xi}^2$, and $\sigma_{k,\chi}$ is the divisor function given as
$$
\sigma_{k,\chi}(\fA)=\sum_{\fa|\fA}\chi(\fa)\, N_{F/\Q}(\fa)^k.
$$
We also write, for an ideal $\fa$ of $\fo_F$, $N_{F/\Q}(\fa)\coloneqq \#(\fo_F/\fa)$ for its norm. Let $\chi_0$ denote the trivial character on the ideal class group of $F$, and 
$h_{E}$ the (wide) class number of the number field $E$.
Since the class number formula implies
\[
L_F(0,\mathfrak{X}_{-\xi})=2^{[F:\Q]-1}[\fo_{F(\sqrt{-\xi})}^\times\colon\fo_F^\times]^{-1}\frac{h_{F(\sqrt{-\xi})}}{h_F},
\]
we have that $C_F^{1,\chi_0}(\xi)$ equals $2^{[F:\Q]-1}H_F(\xi)$, where
\begin{align}
H_F(\xi)\coloneqq\frac{h_{F(\sqrt{-\xi})}}{h_{F}}[\fo_{F(\sqrt{-\xi})}^\times\colon\fo_F^\times]^{-1}\sum_{\fa|\fF_{-\xi}}N_{F/\Q}(\fa)\prod_{\substack{\fp:\text{prime}\\ \fp|\fa}}
\left(
1-(\tfrac{-\xi}{\fp}) N_{F/\Q}(\fp)^{-1}
\right)\label{GHCN}.
\end{align}
Here, the symbol $(\tfrac{-\xi}{\fp})$ takes the value $1$, $-1$, or $0$ according to if the prime ideal $\fp$ is split, inert, or ramified in $F(\sqrt{-\xi})$, that is, it is the Artin symbol.
We shall call this quantity $H_F(n)$ the generalized Hurwitz class number.

Fix a fundamental discriminant $D>0$. 
From this point on in the introduction, we restrict to $F=\Q(\sqrt{D})$, that is, we consider only the real quadratic field $F$ of discriminant $D$.
Here we take $\sqrt{D}$ to be the positive real number, and note that an embedding of $F$ into $\R$ is fixed once and for all. 
Write $\sigma$ for the non-trivial Galois element for $F/\Q$, that is, $\sigma(\sqrt{D})=-\sqrt{D}$. 
We also suppose that the narrow class number of $F$ is $1$.
Under this assumption, since the norm of a fundamental unit is $-1$, it follows from Pell's equation that $D$ has no prime factor congruent to $3$ modulo $4$.
In particular, if $D$ is divisible by $4$, then $D$ has $2$ as a prime factor. 
An element $a \in F$ is said to be totally positive if $a>0$ and $\sigma(a)>0$; 
in this case, we write $a \gg 0$.
Under these assumptions, by \eqref{GHCN}, for $n\in\fo_F$ with $n\gg 0$, we define
\[
H_F(n)\coloneqq h_{F(\sqrt{-n})} \, [\fo_{F(\sqrt{-n})}^\times\colon\fo_F^\times]^{-1} \sum_{\fa|\fF_{-n}}N_{F/\Q}(\fa)\prod_{\fp|\fa}
\left(
1-(\tfrac{-n}{\fp}) N_{F/\Q}(\fp)^{-1}
\right)
\]
when $-n$ is congruent modulo $4$ to some square in $\fo_F$, and set $H_F(n)=0$ otherwise.
If $n\in \fo_F$ is not totally positive, then from the volume of the fundamental domain of the Hilbert modular group (see \cite{Shi63}*{p.70}), and as an analogue of \eqref{eq:2}, we define for $n=0$,
\[
H_F(0)\coloneqq \frac{1}{2}\zeta_F(-1)=\frac{1}{48}B_{2,\chi_D}=\frac{1}{48D}\sum_{a=1}^{D-1} \chi_D(a)\, a^2
\]
and set $H_F(n)=0$ otherwise.
Here, we set $\chi_D(a)\coloneqq \left( \tfrac{D}{a}\right)$, and $B_{k,\chi_D}$ denotes the $k$-th generalized Bernoulli number for $\chi_D$.
The subgroup $U_F$ of totally positive units is generated by $\varepsilon^2$, and we have
\begin{equation}\label{eq:HF1}
H_F(n)=H_F(un)    
\end{equation}
for any $n\in\fo_F$ and any $u\in U_F$.
We note that $F(\sqrt{-n})$ and $F(\sqrt{-\sigma(n)})$ are isomorphic over $\Q$, and hence 
\begin{equation}\label{eq:HF2}
H_F(n)=H_F(\sigma(n))
\end{equation}
holds for any $n\in\fo_F$.
Since there exists $d\in\fo_F$ with $d\gg 0$ such that $\fD_{-n}=(-d)$, that is, $(-n)=(-d)\, \fF_{-n}^2$, by \cite{Oka02}*{Lemma 2.6} we obtain
\begin{equation}\label{eq:HF3}
H_F(n)=H_F(d)\prod_{\fp\mid \fF_{-n}}\frac{N_{F/\Q}(\fp)^{\mathrm{ord}_\fp(\fF_{-n})+1}-N_{F/\Q}(\fp)^{\mathrm{ord}_\fp(\fF_{-n})}(\tfrac{-n}{\fp})-1+(\tfrac{-n}{\fp})}{N_{F/\Q}(\fp)-1},
\end{equation}
where $\fp$ runs over all prime ideals of $\fo_F$ dividing $\fF_{-n}$ and $\mathrm{ord}_\fp(\fa)$ denotes the exponent of $\fp$ in the prime ideal decomposition of $\fa$.

For $n\gg 0$, $F(\sqrt{-n})$ is a CM extension of $F$, that is, a quartic CM field over $\Q$.
As stated in \cite{ST61}*{Example 8.4 (2)}, $F(\sqrt{-n})$ falls into one of the following three cases:
\begin{itemize}
    \item $F(\sqrt{-n})$ is a biquadratic extension of $\Q$.
    \item $F(\sqrt{-n})$ is a cyclic quartic extension of $\Q$.
    \item $F(\sqrt{-n})$ is a non-Galois, and its Galois closure has Galois group $D_4$.
\end{itemize}
When $(-n)=\fD_{-n}$, the number $H_F(n)$ agrees with the class number of $F(\sqrt{-n})$, up to the index of the unit groups. 
In the first case where $F(\sqrt{-n})$ is biquadratic over $\Q$, class field theory implies that
\begin{equation}\label{eq:biquad}
h_{F(\sqrt{-n})} \, [\fo_{F(\sqrt{-n})}^\times\colon\fo_F^\times]^{-1}=\frac{1}{2}\, h_{\Q(\sqrt{-n})}[\fo_{\Q(\sqrt{-n})}^\times\colon\Z^\times]^{-1}\, h_{\Q(\sqrt{-nD})}[\fo_{\Q(\sqrt{-nD})}^\times\colon\Z^\times]^{-1} 
\end{equation}
holds. 
Hence, the essentially new class numbers arise in the other two cases.
These class numbers also admit geometric interpretations; for instance, they are closely related to the number of isomorphism classes of principally polarized abelian surfaces with complex multiplication (see, e.g., \cite{Str14}).
There have already been various studies on class numbers, including classifications of quartic CM fields with class number $1$ (see, e.g., \cite{SO94}), but it appears that there has been no work using the Eichler--Selberg trace formula, which is the main theme of this paper.
We also study the distribution of $H_F(n)$. Since $F(\sqrt{-n})$ is Galois over $\Q$ if and only if $N_{F/\Q}(n)\in(\Z)^2\sqcup D\,(\Z)^2$, it follows that for most $n\gg 0$, $F(\sqrt{-n})$ is non-Galois over $\Q$. Thus, we are led to consider the distribution of class numbers in the non-Galois case.

\subsection{The Eichler--Selberg trace formula for Hilbert cusp forms}
We now describe our Eichler--Selberg trace formula.
Let $S_{\underline{\kappa}}(\SL_2(\fo_F))$ denote the space of Hilbert cusp forms 
of weight $\underline{\kappa}=(\kappa_1,\kappa_2)$ for the congruence subgroup 
$\SL_2(\fo_F)$. 
For a totally positive element $n\in\fo_F$, we denote by $\bT_n$ 
the $n$-th Hecke operator acting on $S_{\underline{\kappa}}(\SL_2(\fo_F))$.
For the definition of $\bT_n$, see \eqref{eq:bTn} 
or \cite{Gar90}*{\S1.15}.
Then, the Hecke operator $T_{n,F}$ analogous to the above $T_n$ can be defined by adjusting the scalar multiple as
\begin{equation}\label{eq:TnFbTn}
T_{n,F}\coloneqq n^{\frac{\kappa_1}{2}-1}\sigma(n)^{\frac{\kappa_2}{2}-1} \bT_n.    
\end{equation}
Note that $\bT_n$ does not depend on multiplying $n$ by an element of $U_F$.
Under these assumptions, an analogue of the Eichler--Selberg trace formula \eqref{eq:3} is given as follows.
\begin{theorem}\label{thm:ES}
If $\kappa_1$ and $\kappa_2$ are even and greater than $1$, then for any totally positive element $n\in\fo_F$, we obtain
\if0\[
\tr\, T_{n,F}|_{S_{\underline{\kappa}}(\SL_2(\fo_F))}  =  \frac{1}{2}\sum_{t\in\fo_F} H_F(4n-t^2)\, P_{\kappa_1}(t,n)\, P_{\kappa_2}(\sigma(t),\sigma(n))   \nonumber  -  \begin{cases}
   \sum_{\fa\mid(n)}N_{F/\Q}(\fa) & \text{if $\kappa_1=\kappa_2=2$,} \\ 0 & \text{otherwise.}
\end{cases}
\]\fi
\begin{align*}
\tr\, T_{n,F}|_{S_{\underline{\kappa}}(\SL_2(\fo_F))}  = &  \frac{1}{2}\sum_{t\in\fo_F} H_F(4n-t^2)\, P_{\kappa_1}(t,n)\, P_{\kappa_2}(\sigma(t),\sigma(n))   \nonumber \\
& -  \begin{cases}
   \sum_{\fa\mid(n)}N_{F/\Q}(\fa) & \text{if $\kappa_1=\kappa_2=2$,} \\ 0 & \text{otherwise.}
\end{cases} \nonumber
\end{align*}
\end{theorem}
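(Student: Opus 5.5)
We propose to derive the formula from the Arthur–Selberg trace formula for $\mathrm{PGL}_2$ over $F$ (equivalently Shimizu's or Saito's explicit trace formula for Hilbert modular forms), specialized to level $\SL_2(\fo_F)$. The test function is the product of a normalized pseudo-coefficient of the discrete series of weight $\kappa_j$ at each real place $j=1,2$ with the characteristic function of $\{g\in \M_2(\fo_{F_v}) : \det g\in n\,\fo_{F_v}^\times\}$ at each finite place $v$. Since the narrow class number is one, the adelic quotient reduces to a single component, and the spectral side equals $\tr\bT_n$ on $S_{\underline{\kappa}}(\SL_2(\fo_F))$ plus a one-dimensional contribution when $\kappa_1=\kappa_2=2$.

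The geometric side we then split into four kinds of terms.

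\textbf{Central terms.} These occur only when $n=\varepsilon^{2m}s^2$. Their contribution is $\vol\times d_{\kappa_1}d_{\kappa_2}$, and via Siegel's $\vol(\SL_2(\fo_F)\bs\fh^2)\propto\zeta_F(-1)$ it should become the $t^2=4n$ term $H_F(0)P_{\kappa_1}P_{\kappa_2}$, where $P_\kappa(\pm2\sqrt n,n)=(\pm\sqrt n)^{\kappa-2}(\kappa-1)$. This is exactly why $H_F(0)=\tfrac12\zeta_F(-1)$ was chosen.

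\textbf{Elliptic terms.} These are indexed by $t\in\fo_F$ with $t^2-4n$ totally negative, i.e.\ $4n-t^2\gg0$, modulo $\pm$. We compute them as follows:
\begin{itemize}
\item The archimedean orbital integral of the pseudo-coefficient at each real place is $-(\rho^{\kappa-1}-\overline\rho^{\kappa-1})/(\rho-\overline\rho)$ up to normalization. The two signs cancel, giving $P_{\kappa_1}(t,n)P_{\kappa_2}(\sigma(t),\sigma(n))$.
\item The global volume factor is $h_{E}/[\fo_E^\times:\fo_F^\times]$ divided by the relevant regulator and $L$-value normalizations, where $E=F(\sqrt{t^2-4n})$.
\item The local orbital integrals at finite $v$ count lattices/orders containing the root. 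Computed via the classical Hijikata/Eichler method, they give $\sum_{\fa\mid\fF}N(\fa)\prod_{\fp\mid\fa}(1-(\tfrac{-\xi}{\fp})N(\fp)^{-1})$, which matches \eqref{GHCN}. The congruence condition mod $4$ corresponds to $t^2-4n$ being a discriminant, which is automatic here.
\end{itemize}
Including both $t$ and $-t$ yields the factor $\tfrac12$.

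\textbf{Hyperbolic and unipotent terms.} Mixed-sign $t^2-4n$ terms, where the element is elliptic at one place and hyperbolic at the other, vanish because the discrete-series pseudo-coefficient has zero orbital integrals on split regular elements. Totally positive $t^2-4n$ terms also vanish, except that, together with the unipotent and the weighted/continuous terms, they must be shown to cancel. This is the key difference from \eqref{eq:3}: over $\Q$ these produce $-\tfrac12\sum\min(u,u')^{k-1}$, while for $F$ with $[F:\Q]=2$ and parallel regular weights the Eisenstein/parabolic contributions to the cuspidal trace should vanish. I would invoke Shimizu's computation that, for two archimedean places with discrete series at both, the terms involving $\min$ and the unipotent $\zeta$-value contributions cancel. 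The obstacle is the careful treatment of the hyperbolic contributions for $\kappa_1=\kappa_2=2$. There the pseudo-coefficient is not cuspidal enough, and the $-\sum_{\fa\mid(n)}N(\fa)$ term arises as the trace of $\bT_n$ on the residual (constant) representation, normalized by \eqref{eq:TnFbTn}.

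\textbf{Normalization.} Finally we check the normalization $T_{n,F}=n^{\kappa_1/2-1}\sigma(n)^{\kappa_2/2-1}\bT_n$ against the central-character twists, using \eqref{eq:HF1} to identify $t$ and $\varepsilon^2$-multiples consistently so that the sum over $t\in\fo_F$ is correct. I expect the main obstacle to be matching the finite-place orbital integrals and unit indices precisely to $H_F$, in particular the factor $[\fo_E^\times:\fo_F^\times]$ and the $2^{[F:\Q]-1}$ from the class number formula.
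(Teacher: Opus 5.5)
Your outline is sound and reaches the right formula. It departs from the paper's own proof exactly where the work lies, and it contains two inaccuracies worth fixing.

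\textbf{Non-elliptic terms.} You start from the full trace formula and want to show, via Shimizu, that the hyperbolic, unipotent and continuous terms cancel. The paper never has to do this. It applies Arthur's $L^2$-Lefschetz formula \cite{Art89}*{Theorem 6.1} to $\overline{\rG}$ (Lemma~\ref{lem:31}), whose geometric side already consists only of the identity term and the regular $\R$-elliptic terms. The reason is that the only proper Levi subgroup, the restriction of scalars of the diagonal torus, has $\R$-split rank $2$ but $\Q$-split rank $1$, so it has no elements that are $\R$-elliptic modulo its split centre. In your language: the test function is cuspidal at two real places, so the simple trace formula applies and nothing is left to cancel. This is also why the $\min(u,u')^{k-1}$ term of \eqref{eq:3} has no analogue here.

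Your remark that at weight $(2,2)$ ``the pseudo-coefficient is not cuspidal enough'' is wrong. Pseudo-coefficients of $D_2$ are just as cuspidal, and nothing extra happens on the geometric side. The correction is purely spectral. The trivial representation lies in the residual spectrum, and the pseudo-coefficient of $D_2$ has trace $-1$ on it at each real place. It therefore enters with sign $+1$ and contributes $\int\overline{h_n}=\sum_{\fa\mid(n)}N_{F/\Q}(\fa)$, which is $\mathscr{R}(\underline{\kappa},h_n)$. Narrow class number one rules out the other characters $\chi\circ\det$.

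\textbf{Elliptic terms.} You factor each orbital integral as a global volume times local orbital integrals computed by Hijikata's method. This is essentially Saito's route, which the paper cites and deliberately avoids. The paper instead follows Knightly--Li and evaluates the global orbital integral in one step. It uses the bijection of Proposition~\ref{KL prop26.23} between $\rG_\gamma(F)\bs\rG(\A_f)/K_f$ and $\bigcup_{\fo}\cP(\fo)\bs\cI(\fo)$, which gives $\sum_{\fo\ni\gamma}h(\fo)/[\fo^\times:\fo_F^\times]=H_F(4n-t^2)$ (Propositions~\ref{prop:KL} and~\ref{prop:KL2}) with no $p$-adic computation. The trade-off is this: the global argument is shorter and more transparent but needs $\fo_F$ to be a PID so that lattices are free, while your local argument is heavier but extends to arbitrary class number and level.

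The normalization problem you anticipate largely disappears. Normalize so that $\fo_{E_v}^\times/\fo_{F_v}^\times$ and the archimedean torus have volume $1$. Then the global volume is exactly $h_E/[\fo_E^\times:\fo_F^\times]$, with no regulators, $L$-values or $2^{[F:\Q]-1}$; that power of $2$ only appears when comparing with Su's coefficients.

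Finally, ``pair $t$ with $-t$'' does not account for $t=0$. That conjugacy class is single, and its factor $\tfrac12$ comes from the centralizer of $\overline{\gamma}$ in $\overline{\rG}$, which contains $E^\times/F^\times$ with index $2$. Hence $\overline{\Phi}=\tfrac12\Phi$ when $\tr\gamma=0$. This is the point of Lemma~\ref{lem:KL3}, whose factor should be read this way; the case $D=5$ of Corollary~\ref{cor:main1}, where the dimension must be $0$, confirms it.
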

\begin{proof}
This formula essentially follows from Saito's explicit formula \cite{Sai84}*{Theorem 2.1}. 
In his paper, the space of adelic automorphic forms on $\GL(2)$ is considered, but by Lemma \ref{lem:32} of this paper, it coincides with the usual space of holomorphic Hilbert cusp forms. 
Although his formula is complicated, one can see from the proof of \cite{Oka02}*{Proposition 2.7} that the present formula follows from it.

In this paper, we give a simple alternative proof without using the explicit formula of \cite{Sai84}. 
More precisely, this theorem follows from Theorem \ref{thm:traceformula}, Proposition \ref{prop:KL2}, and Lemma \ref{lem:KL3} (see also Remark \ref{rem:conjugacyclass}). 
In Theorem \ref{thm:traceformula}, we use the $l$-th Chebyshev polynomial of the second kind $U_l$, but it can be rewritten by means of the relation
\begin{equation}\label{eq:PU}
P_\kappa(t,n)=n^{\frac{\kappa}{2}-1} \, U_{\kappa-2}\left(\frac{t}{2\sqrt{n}}\right).    
\end{equation}
Although the explicit formula of \cite{Sai84} is more general, its proof uses the method of \cite{Hij74}, and therefore involves computations of orbital integrals of elliptic conjugacy classes over individual $p$-adic fields, making the argument rather lengthy. 
In this paper, by using the method of \cite{KL06}, we avoid computations of local orbital integrals over individual $p$-adic fields and give a more transparent alternative proof directly relating generalized Hurwitz class numbers to global orbital integrals of elliptic conjugacy classes (see Propositions \ref{prop:KL} and \ref{prop:KL2}).
\end{proof}
\begin{remark}
\begin{itemize}
    \item As mentioned in the proof, this formula is not essentially new. 
However, it is clear that expressing traces of Hecke operators in such a simple form is important for applications to analytic number theory and computational number theory. 
In particular, since the formula of \cite{Sai84} also contains traces of Atkin--Lehner operators, we plan to rewrite his formula in a simpler form and apply it to the study of root numbers and murmurations in future work.
\item Here we mention some previous works on trace formulas for Hecke operators acting on spaces of holomorphic Hilbert cusp forms. 
Except for the formula for the contribution of elliptic conjugacy classes due to \cite{Hij74} and \cite{Sai84}, a general formula was given in \cite{Shi65}*{\S3}. 
Originally, the assumptions $\kappa_1>2$ and $\kappa_2>2$ were imposed, but these assumptions were removed in \cite{Ish74}.
An analogue of the Eichler--Selberg trace formula for such Hilbert cusp forms is also studied in \cite{Tak86}.
In the formula \cite{Tak86}*{Theorem 2}, the part described in Theorem \ref{thm:ES} by generalized Hurwitz class numbers is expressed instead in terms of the discrete invariants $C_{n,m}$ defined in \cite{Tak86}*{p.144}.
For this reason, it seems that Theorem \ref{thm:ES} cannot be derived directly from \cite{Tak86}*{Theorem 2}.
\item In Theorem \ref{thm:ES}, we assume that the narrow class number of $F$ is $1$, which means that the wide class number of $F$ is $1$ and that the norm of a fundamental unit is $-1$.
To relate the geometric side of the trace formula to generalized Hurwitz class numbers, we use the argument of \cite{KL06}; however, in order to generalize their argument over $\Q$ to $F$, the assumption that the wide class number is $1$ is required, see \S\ref{sec:Elliptic}.
The assumption on units is used in the diffeomorphism \eqref{eq:diff2} of the arithmetic quotient.
This diffeomorphism is used in the interpretation of the spectral side in \cite{Art89}, and is in particular related to the contribution of the trivial representation (cf. the proof of Lemma \ref{lem:31}).
\end{itemize}
\end{remark}

\subsection{Corollaries obtained from the Eichler--Selberg trace formula}

As a corollary of Theorem \ref{thm:ES}, together with \eqref{eq:PU}, we obtain the following dimension formula.
\begin{corollary}\label{cor:main1}
If $\kappa_1$ and $\kappa_2$ are even and greater than $1$, we obtain
\begin{align*}
\dim S_{\underline{\kappa}}(\SL_2(\fo_F))   = &  H_F(0)\, (\kappa_1-1)(\kappa_2-1)  \\
&  + \frac{1}{2}H_F(4)\, (-1)^{\frac{\kappa_1+\kappa_2}{2}}+ H_F(3)\, U_{\kappa_1-2}\left( \frac{1}{2} \right)\, U_{\kappa_2-2}\left( \frac{1}{2} \right)  ,  \nonumber \\
& +I(5)+I(8) -  \begin{cases}
   1  & \text{if $\kappa_1=\kappa_2=2$,} \\ 0 & \text{otherwise.}
\end{cases} \nonumber
\end{align*} 
Here, 
\[
H_F(4)=\frac{1}{4} \times h_{\Q(\sqrt{-D})} \times \begin{cases}
    1 & \text{if $D\equiv 1 \mod 4$},\\
    3 & \text{if $D$ has $2$ as a prime factor},
\end{cases} \qquad 
H_F(3)=\frac{1}{6} \times h_{\Q(\sqrt{-3D})}.
\]
We put
\[
I(5)= \frac{1}{5}\left\{ U_{\kappa_1-2}\left( a_1 \right)\, U_{\kappa_2-2}\left( a_2 \right) + U_{\kappa_1-2}\left( a_2 \right)\, U_{\kappa_2-2}\left( a_1 \right) \right\}, \quad a_j\coloneqq \cos\left(\frac{2j\pi}{5}\right)  \quad \text{if $D=5$},
\]
and $I(5)=0$ if $D\neq 5$.
We also put
\[
I(8)=\frac{1}{4} U_{\kappa_1-2}\left( \frac{1}{\sqrt{2}} \right)\, U_{\kappa_2-2}\left( \frac{1}{\sqrt{2}} \right) \quad \text{if $D=8$},
\]
and $I(8)=0$ if $D\neq 8$.
\end{corollary}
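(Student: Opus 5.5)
Specialize Theorem~\ref{thm:ES} to $n=1$. Since $\bT_1$ is the identity and $T_{1,F}=\bT_1$ by \eqref{eq:TnFbTn}, the left-hand side is $\dim S_{\underline{\kappa}}(\SL_2(\fo_F))$. The correction term is $\sum_{\fa\mid(1)}N_{F/\Q}(\fa)=1$ when $\kappa_1=\kappa_2=2$, which gives the last term of the corollary. By \eqref{eq:PU} with $n=1$ we have $P_\kappa(t,1)=U_{\kappa-2}(t/2)$, so the main term becomes
\[
\frac12\sum_{t\in\fo_F}H_F(4-t^2)\,U_{\kappa_1-2}(t/2)\,U_{\kappa_2-2}(\sigma(t)/2).
\]

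The first task is to find the $t$ that contribute. $H_F(4-t^2)\neq0$ requires either $4-t^2=0$ or $4-t^2\gg0$, that is, $|t|<2$ and $|\sigma(t)|<2$ (or $t=\pm2$). So $t$ is an algebraic integer all of whose conjugates lie in $(-2,2)$, or $t=\pm2$.

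For $t=\pm2$, $H_F(0)$ appears with $U_{\kappa-2}(\pm1)=(\pm1)^{\kappa}(\kappa-1)=\kappa-1$, since $\kappa$ is even. This gives $2\cdot\frac12 H_F(0)(\kappa_1-1)(\kappa_2-1)$.

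For the remaining $t$, Kronecker's theorem says that such $t$ are exactly of the form $2\cos(2\pi k/m)$ with $\Q(t)\subset F$. The rational ones are $t=0,\pm1$. The irrational ones have degree $2$, so they require $\varphi(m)/2=2$, i.e. $m\in\{5,8,10,12\}$, with $\Q(\cos(2\pi/m))$ equal to $\Q(\sqrt5),\Q(\sqrt2),\Q(\sqrt5),\Q(\sqrt3)$ respectively. Since $D$ has no prime factor congruent to $3$ mod $4$, $\Q(\sqrt3)$ is excluded. Hence extra terms occur only for $D=5$ and $D=8$.

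The contributions are computed as follows.

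\begin{itemize}
\item[$t=0$:] We get $\frac12H_F(4)U_{\kappa_1-2}(0)U_{\kappa_2-2}(0)$ with $U_{\kappa-2}(0)=(-1)^{(\kappa-2)/2}$, whose product is $(-1)^{(\kappa_1+\kappa_2)/2}$.
\item[$t=\pm1$:] The two terms $\frac12H_F(3)$ pair up, using evenness of $U_{\kappa-2}$, to give $H_F(3)U_{\kappa_1-2}(\frac12)U_{\kappa_2-2}(\frac12)$.
\item[$D=5$:] The values $t\in\{\pm2\cos(2\pi/5),\pm2\cos(4\pi/5)\}$ are swapped by $\sigma$. They give four terms, which pair under $t\mapsto -t$. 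Here $4-t^2$ generates the same ideal as $\sqrt5$ up to units, and $F(\sqrt{-(4-t^2)})=\Q(\zeta_5)$ or $\Q(\zeta_{10})$. This gives $I(5)$ once $H_F(4-t^2)=\frac15$.
\item[$D=8$:] The values $t=\pm\sqrt2$ give $F(\sqrt{-(4-2)})=\Q(\zeta_8)$, and we need $H_F(2)=\frac14$.
\end{itemize}

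The step I expect to be the main obstacle is evaluating these generalized Hurwitz class numbers explicitly from the definition.

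\begin{itemize}
\item For $H_F(4)$, the field $F(\sqrt{-4})=F(i)$ is biquadratic, so \eqref{eq:biquad} applies. But $-4$ is not squarefree: $\fF_{-4}$ is $(2)$ when $D\equiv1\bmod 4$, and the divisor-sum factor from \eqref{eq:HF3} must be computed. When $2\mid D$ the prime above $2$ ramifies in $F$, and this is what produces the factor $3$ versus $1$. Concretely, I will compute $h_{F(i)}[\fo^\times:\fo_F^\times]^{-1}=\frac12\cdot\frac12\,h_{\Q(\sqrt{-D})}$, using $h_{\Q(i)}=1$ with unit index $2$, and then multiply by the $\fF$-factor. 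The resulting factor is $1$ for $D\equiv 1\bmod 4$, and $3$ in the case $2\mid D$, where $\fF_{-4}$ is nontrivial with $\fp=(\sqrt2$-type$)$ ramified in $F(i)$.
\item For $H_F(3)$, $\Q(\sqrt{-3})$ has unit index $3$, giving $\frac12\cdot\frac13 h_{\Q(\sqrt{-3D})}$, and $-3\equiv$ a square mod $4$ with trivial conductor.
\item For the cyclotomic cases, I will use $h=1$ and unit indices $[\fo_{\Q(\zeta_{5})}^\times:\fo_F^\times]=5$ and $[\fo_{\Q(\zeta_8)}^\times:\fo_F^\times]=4$ (the roots of unity times the Hasse unit index $1$), and check that the relevant $\fF$ is trivial.
\end{itemize}

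Finally, I will sum all contributions.
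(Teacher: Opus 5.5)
Your route is the paper's: specialize Theorem~\ref{thm:ES} to $n=1$, isolate the finitely many $t$ with $4-t^2\gg0$ or $4-t^2=0$, and evaluate $H_F$ at those arguments via \eqref{eq:biquad} and the cyclotomic fields $\Q(\zeta_5)$, $\Q(\zeta_8)$. Using Kronecker's theorem justifies the list of $t$ more cleanly than the paper's ``direct computation''. Your evaluation of $H_F(\frac{5\pm\sqrt5}{2})=\frac15$ and $H_F(2)=\frac14$ uses class number $1$, unit indices $5$ and $4$, and trivial $\fF$ (confirmed by $d_{\Q(\zeta_5)}=5^3=5^2\cdot 5$ and $d_{\Q(\zeta_8)}=2^8=8^2\cdot 4$). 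This replaces the paper's LMFDB lookup. These parts, and the bookkeeping for $t=\pm2,0,\pm1$, are correct.

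There is, however, an error in the step you single out yourself, the evaluation of $H_F(4)$. You claim $\fF_{-4}=(2)$ when $D\equiv1\bmod4$, and then that the divisor-sum factor is $1$. These are incompatible. For nontrivial $\fF_{-4}$, the sum contains, besides the term $1$ from $\fa=(1)$, further terms $N_{F/\Q}(\fa)\prod_{\fp\mid\fa}(1-(\frac{-4}{\fp})N_{F/\Q}(\fp)^{-1})>0$.

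The correct statement is $\fF_{-4}=(1)$. For $D\equiv1\bmod4$, the conductor--discriminant formula gives $d_{F(i)}=(-4)\cdot D\cdot(-4D)=16D^2$, so $N_{F/\Q}(\fD_{-4})=16=N_{F/\Q}((4))$. Since $\fD_{-4}$ divides the discriminant $(4)$ of $\fo_F[i]$, we get $\fD_{-4}=(4)$, and the factor is $1$ for trivial reasons.

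A nontrivial conductor appears only when $2\mid D$, which under narrow class number one forces $D=8$. There $d_{F(i)}=d_{\Q(\zeta_8)}=2^8$ gives $\fD_{-4}=(\sqrt2)^2$, hence $\fF_{-4}=(\sqrt2)$, which is ramified in $F(i)$, and the factor is $1+2=3$. That part of your bullet is right.

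Substitute $\fF_{-4}=(1)$ for $D\equiv1\bmod4$. Also make explicit the check that $\Q(\sqrt{-D})$ and $\Q(\sqrt{-3D})$ have unit index $1$, being neither $\Q(i)$ nor $\Q(\sqrt{-3})$. With these, your argument is complete.
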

\begin{proof}
If one determines $t\in\fo_F$ satisfying $4-t^2\gg0$ and then computes $H_F(4-t^2)$, the above dimension formula follows from Theorem \ref{thm:ES}.
By direct computation, the elements $t\in\fo_F$ satisfying $4-t^2\gg0$ are given as follows.
\[
t=\begin{cases}
  0,\;\; \pm 1 \;\; \text{or}\;\; \pm\frac{1\pm\sqrt{5}}{2}   & \text{when $D=5$,} \\
  0,\;\; \pm 1 \;\; \text{or}\;\; \pm\sqrt{2}  & \text{when $D=8$,}\\
  0\;\; \text{or} \;\; 1  & \text{otherwise.}
\end{cases}
\]
By \eqref{eq:biquad}, we obtain the values of $H_F(4)$ and $H_F(3)$.
The case of $H_F(2)$ when $D=8$ is similar, and we obtain $H_F(2)=1/4$, which yields the term $I(8)$.
In the case $D=5$, by \cite{lmfdb:4.0.125.1}, 
we obtain $H_F\left(\frac{5\pm \sqrt{5}}{2}\right)=\frac{1}{5}$, which yields the term $I(5)$.
\end{proof}
\begin{remark}
A dimension formula for the space of Hilbert cusp forms over a real quadratic field, such as Corollary \ref{cor:main1}, is known to be computable by combining the trace formulas for dimensions \cites{Shi63,Ish74} with the information on torsion elements of $\SL_2(\fo_F)$ given in \cite{Pre68}.
For example, see \cite{Ish88} for the formula and numerical tables for the dimension $\dim S_{(2,2)}(\SL_2(\fo_F))$.
\end{remark}

\begin{corollary}\label{cor:main2}
Among the fundamental discriminants $D$ satisfying the above conditions, we have
$\dim S_{(2,2)}(\SL_2(\fo_F))=0$ only for $D=5$, $8$, $13$, $17$.
In these cases, an analogue of the class number relation for Hurwitz class numbers holds as follows.
\begin{equation}\label{eq:relation}
 \sum_{t\in \fo_F} H_F(4n-t^2) =      2 \sum_{\fa\mid(n)}N_{F/\Q}(\fa)  
\end{equation}
\end{corollary}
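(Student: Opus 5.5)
Both assertions come from specializing the earlier results to $\kappa_1=\kappa_2=2$. For this weight, $P_2(t,n)=1$ (equivalently $U_0\equiv 1$ in \eqref{eq:PU}), so Theorem \ref{thm:ES} becomes
\[
\tr T_{n,F}|_{S_{(2,2)}(\SL_2(\fo_F))}=\frac12\sum_{t\in\fo_F}H_F(4n-t^2)-\sum_{\fa\mid(n)}N_{F/\Q}(\fa).
\]
If $S_{(2,2)}(\SL_2(\fo_F))=0$, the left-hand side vanishes for every totally positive $n$. Multiplying by $2$ then gives \eqref{eq:relation} directly. So the relation is automatic once the vanishing of the dimension is known, and all the work lies in deciding exactly when $\dim S_{(2,2)}(\SL_2(\fo_F))=0$.

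For that I will specialize Corollary \ref{cor:main1} to $\kappa_1=\kappa_2=2$. Since $U_0=1$ and $(-1)^{(2+2)/2}=1$, it gives
\[
\dim S_{(2,2)}(\SL_2(\fo_F))=H_F(0)+\tfrac12H_F(4)+H_F(3)+I(5)+I(8)-1,
\]
with $I(5)=2/5$ if $D=5$ and $I(8)=1/4$ if $D=8$, and both zero otherwise. Substituting the stated values gives
\[
\dim = \frac{B_{2,\chi_D}}{48}+\frac{c_D\,h_{\Q(\sqrt{-D})}}{8}+\frac{h_{\Q(\sqrt{-3D})}}{6}+I(5)+I(8)-1,
\]
where $c_D=1$ or $3$ according to the parity condition in Corollary \ref{cor:main1}.

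For the four small discriminants I will evaluate this directly. The class numbers $h_{\Q(\sqrt{-D})}$ and $h_{\Q(\sqrt{-3D})}$ are known, and $B_{2,\chi_D}$ can be computed from the finite sum; equivalently, $\zeta_F(-1)$ takes the values $1/30,1/12,1/6,1/3$ for $D=5,8,13,17$. I expect the result to be $0$ in each case. As a sanity check, for $D=5$: $\zeta_F(-1)=1/30$ gives $H_F(0)=1/60$; $h_{\Q(\sqrt{-5})}=2$ gives $\tfrac12 H_F(4)=1/4$; $h_{\Q(\sqrt{-15})}=2$ gives $H_F(3)=1/3$; adding $2/5$ yields $1/60+1/4+1/3+2/5=1$, so the dimension is $0$.

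The main obstacle is showing that no other $D$ gives dimension zero. The admissible $D$ are those with narrow class number one, so all odd prime factors are $\equiv1 \bmod 4$; the list begins $5,8,13,17,29,37,41,53,\dots$. The plan is to show that $\dim S_{(2,2)}\ge1$ for every admissible $D>17$. The approach is to compare the volume term with the class-number terms. Siegel's formula gives $\zeta_F(-1)=\frac{D^{3/2}}{4\pi^4}\zeta_F(2)\ge \frac{D^{3/2}}{4\pi^4}$, so $H_F(0)\gg D^{3/2}$. The positive terms $H_F(4)$ and $H_F(3)$ only help, since the dimension is at least $H_F(0)-1$ (when $D>8$). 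Thus $\dim\ge1$ as soon as $H_F(0)=\zeta_F(-1)/2>1$, i.e. $\zeta_F(-1)>2$, and the Siegel bound gives this once $D^{3/2}>8\pi^4$, that is, $D\gtrsim 85$. For the finitely many admissible $D$ between $17$ and that bound ($29,37,41,53,61,73$), I will compute the dimension directly from the formula above, using tabulated $B_{2,\chi_D}$ and class numbers. Since the dimension is an integer, it suffices to check that it is nonzero. Alternatively, I can cite the tables of \cite{Ish88} for these values.
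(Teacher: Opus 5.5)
Your proposal is correct and is essentially the paper's argument. The relation \eqref{eq:relation} is Theorem \ref{thm:ES} at weight $(2,2)$ with vanishing trace, and the classification combines $\dim S_{(2,2)}(\SL_2(\fo_F))\ge H_F(0)-1$ from Corollary \ref{cor:main1} with a functional-equation lower bound for $H_F(0)$ of order $D^{3/2}$ and a finite check for small $D$. The only difference is bookkeeping: your cutoff $D\ge 85$ leaves just $D=29,37,41,53,61,73$, which you evaluate via Corollary \ref{cor:main1} (they do give dimensions $1,1,1,2,2,2$). The paper instead uses the weaker cutoff $D>1000$ and reads off the remaining cases from the tables of \cite{Ish88}.
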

\begin{remark}
Of course, this class number relation is analogous to the class number relation for $H(n)$ obtained from the case $k=2$ of \eqref{eq:3}. 
In \cite{JRW06}, the class number relation was used to verify numerical values of class numbers obtained under the assumption of the GRH. 
This class number relation may also be applicable to the verification of class number computations, although we do not pursue such an application in this paper and leave this for future work.
\end{remark}
\begin{proof}
    For $D\le 1000$ satisfying the above conditions, it can be verified from the numerical tables in \cite{Ish88} that the only values of $D$ for which $\dim S_{(2,2)}(\SL_2(\fo_F))=0$ are $5$, $8$, $13$, and $17$.
    By Corollary \ref{cor:main1} we have $\dim S_{(2,2)}(\SL_2(\fo_F))\ge \frac{1}{48}B_{2,\chi_D}-1$, and using the Dirichlet $L$-function $L(s,\chi_D)$ we obtain
    \[
    \frac{1}{48}B_{2,\chi_D}=-\frac{1}{24}L(-1,\chi_D)=\frac{D^{\frac32}}{96\pi^2}L(2,\chi_D)\ge \frac{D^{\frac32}}{96\pi^2}\frac{\zeta(4)}{\zeta(2)}=\frac{D^{\frac32}}{1440}.
    \]
    Hence, for $D> 1000$ we obtain $\dim S_{(2,2)}(\SL_2(\fo_F))>0$.
\end{proof}

\begin{corollary}\label{cor:main3}
We have that
    \[
    \sum_{\substack{n\mod U_F, \\ N_{F/\Q}(n)<X}}\sum_{t\in\fo_F}H_F(4n-t^2) \sim L(1,\chi_D) \, \zeta_F(2)\,  X^2 ,\qquad X\to\infty. 
    \]
    Here, $n$ runs over all totally positive elements of $\fo_F$ with $N_{F/\Q}(n)<X$, taken modulo $U_F$.
\end{corollary}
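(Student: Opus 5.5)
Take Theorem \ref{thm:ES} with $\kappa_1=\kappa_2=2$, where $P_2(t,n)=1$. This gives, for every totally positive $n$,
\[
\sum_{t\in\fo_F}H_F(4n-t^2)=2\,\tr T_{n,F}|_{S_{(2,2)}(\SL_2(\fo_F))}+2\sigma_F(n),\qquad \sigma_F(n)\coloneqq\sum_{\fa\mid(n)}N_{F/\Q}(\fa).
\]
Both sides depend only on $n$ modulo $U_F$. The left side is unchanged because $t\mapsto ut$ with $u=\varepsilon^{2k}$ a square unit, combined with \eqref{eq:HF1}, permutes the terms. The right side is unchanged because $\bT_n$ depends only on $(n)$ and $n^{0}\sigma(n)^{0}=1$. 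Summing over $n$ modulo $U_F$ with $N_{F/\Q}(n)<X$, it therefore suffices to prove two things: first, that $\sum 2\sigma_F(n)\sim L(1,\chi_D)\zeta_F(2)X^2$; second, that the sum of the traces is $o(X^2)$.

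\emph{Main term.} Since $F$ has narrow class number one, totally positive $n$ modulo $U_F$ correspond bijectively to the nonzero integral ideals of $\fo_F$. The sum $\sum_{N\fn<X}\sigma_F(\fn)$ is the summatory function of the Dirichlet series $\zeta_F(s)\zeta_F(s-1)$. This series has its rightmost pole at $s=2$, with residue $\zeta_F(2)\operatorname{Res}_{s=1}\zeta_F(s)$. By a standard Tauberian argument (Wiener--Ikehara, or elementarily: write $\sigma_F(\fn)=\sum_{\fa\fb=\fn}N\fb$ and use the ideal count $\#\{N\fb<Y\}=cY+O(Y^{1/2})$), the sum is $\sim \tfrac12\zeta_F(2)\operatorname{Res}_{s=1}\zeta_F(s)\,X^2$. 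Since $\zeta_F(s)=\zeta(s)L(s,\chi_D)$, the residue is $L(1,\chi_D)$. Multiplying by $2$ gives exactly $L(1,\chi_D)\zeta_F(2)X^2$. This step is routine.

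\emph{Error term.} This is the main obstacle, because $S_{(2,2)}$ need not vanish. Let $f_1,\dots,f_d$ be a Hecke eigenbasis, with $d=\dim S_{(2,2)}(\SL_2(\fo_F))$ finite and fixed. Then $\tr T_{n,F}=\sum_i\lambda_i(n)$, where $\lambda_i(n)$ is the eigenvalue of $\bT_n$ on $f_i$, normalized so that Deligne-type bounds read $|\lambda_i(n)|\le \sigma_0((n))N(n)^{1/2}$. These bounds are available for parallel weight $(2,2)$ via Shimura/Blasius; alternatively, the trivial Hecke bound $|\lambda_i(n)|\ll N(n)^{1/2+1/2}$ with $\epsilon$-loss is not quite enough, so one can instead use the Rankin--Selberg bound $\sum_{N\fn<X}|\lambda_i(\fn)|^2\ll X^{2}$ together with Cauchy--Schwarz. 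Either route gives $\sum_{N(n)<X}\tr T_{n,F}\ll_\epsilon X^{3/2+\epsilon}=o(X^2)$. Alternatively, one can note that $\sum_{\fn}\lambda_i(\fn)N\fn^{-s}$ is the entire $L$-function of a cusp form shifted by $1/2$, so its partial sums are $O(X^{3/2-\delta})$.

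Combining the main term and the error bound gives the asymptotic claimed in Corollary \ref{cor:main3}.
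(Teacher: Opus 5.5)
Your proof is correct, but it is organized differently from the paper's. The paper never separates a main term from an error term. It multiplies the weight-$(2,2)$ case of Theorem \ref{thm:ES} by $N_{F/\Q}(n)^{-(s-1/2)}$ and sums over $n$ modulo $U_F$, getting
\[
\frac12\sum_{n \bmod U_F}\frac{\sum_{t\in\fo_F}H_F(4n-t^2)}{N_{F/\Q}(n)^{s-\frac12}}=\sum_{j=1}^{d_F}L(f_j,s)+\zeta_F(s-\tfrac12)\,\zeta_F(s-\tfrac32).
\]
It then applies a Tauberian theorem to the left-hand side, whose coefficients are non-negative. The only fact used about the cusp forms is that each $L(f_j,s)$ is entire. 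Hence the rightmost singularity is the simple pole of $\zeta_F(s-\frac32)$ at $s=\frac52$, with residue $\zeta_F(2)L(1,\chi_D)$.

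You instead treat $2\sigma_F(n)$ by an elementary ideal count, and bound the cusp contribution directly. For that you use the Ramanujan bound of Theorem \ref{thm:GRC} extended multiplicatively, or alternatively Rankin--Selberg together with Cauchy--Schwarz. This needs a stronger input on the eigenvalues than mere entireness of $L(f_j,s)$. In exchange it gives an explicit power-saving remainder, namely $L(1,\chi_D)\zeta_F(2)X^2+O(X^{3/2}\log X)$, which the Tauberian argument does not provide. Conversely, the paper's argument relies on the non-negativity of $H_F$, since that is what lets the Tauberian theorem be applied to the whole series at once. Your argument does not need this.

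Two minor points:
\begin{itemize}
\item The substitution showing invariance under $n\mapsto un$ with $u=\varepsilon^{2k}$ is $t\mapsto\varepsilon^{k}t$, not $t\mapsto ut$. In any case, well-definedness also follows from the right-hand side of Theorem \ref{thm:ES}.
\item Your closing aside, that entireness alone yields partial sums $O(X^{3/2-\delta})$, would actually require the functional equation and growth estimates. Nothing in your main argument depends on it.
\end{itemize}
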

\begin{proof}
    Let $d_F\coloneqq\dim S_{(2,2)}(\SL_2(\fo_F))$, and let $\{ f_j \}_{1\le j\le d_F}$ be a basis of $S_{(2,2)}(\SL_2(\fo_F))$ consisting of Hecke eigenforms. Then, by Theorem \ref{thm:ES} for $\kappa_1=\kappa_2=2$, we have
    \[
    \frac{1}{2}\sum_{n\mod U_F} \frac{\sum_{t\in\fo_F}H_F(4n-t^2) }{N_{F/\Q}(n)^{s-\frac{1}{2}}}=\sum_{j=1}^{d_F} L(f_j,s)+\zeta_F(s-\tfrac{1}{2})\, \zeta_F(s-\tfrac{3}{2}).
    \]
    Here, $n$ runs over all totally positive elements of $\fo_F$ modulo $U_F$, and $L(f_j,s)$ denotes the Hecke $L$-function associated with $f_j$ (cf. \cite{Gar90}*{\S1.9}).
    Since $L(f_j,s)$ is entire on $\C$, the assertion follows from the Tauberian theorem.
\end{proof}
\begin{remark}
Note that there exists a constant $c>0$ such that
\begin{equation*}\label{eq:asympbypv}
\sum_{\substack{n\mod U_F,\\ N_{F/\Q}(n)<X}}H_F(n) \sim c\, X^{\frac{3}{2}}, \qquad X\to\infty.    
\end{equation*}
This asymptotic formula can be proved by using the explicit formulas for the prehomogeneous zeta functions of the space of binary quadratic forms (cf. \cite{Dat93}*{\S5} and \cite{HW18}*{Ch.4}) together with a Tauberian theorem.
For the fact that the zeta function is given by the Dirichlet series of $H_F(n)$, see \cite{KTW22}*{\S5}.
\end{remark}
\begin{corollary}\label{cor:main4}
    For any arbitrarily small $\epsilon>0$, 
    \[
    \sum_{t\in\fo_F} H_F(4n-t^2) \ll_\epsilon N_{F/\Q}(n)^{1+\epsilon}.    
    \]
\end{corollary}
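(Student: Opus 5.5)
We apply Theorem \ref{thm:ES} with $\kappa_1=\kappa_2=2$. In this case $P_2(t,n)=1$, so the formula reads
\[
\tr T_{n,F}|_{S_{(2,2)}(\SL_2(\fo_F))}=\frac{1}{2}\sum_{t\in\fo_F}H_F(4n-t^2)-\sum_{\fa\mid(n)}N_{F/\Q}(\fa).
\]
This can be rearranged as
\[
\sum_{t\in\fo_F}H_F(4n-t^2)=2\,\tr T_{n,F}|_{S_{(2,2)}}+2\sum_{\fa\mid(n)}N_{F/\Q}(\fa).
\]
It therefore suffices to bound the two terms on the right separately by $O_\epsilon(N_{F/\Q}(n)^{1+\epsilon})$.

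The divisor term is elementary. We have $\sum_{\fa\mid(n)}N_{F/\Q}(\fa)\le N_{F/\Q}(n)\,\tau_F((n))$, where $\tau_F$ is the number of ideal divisors. Splitting rational primes into at most two prime ideals gives $\tau_F(\fA)\le\tau(N_{F/\Q}(\fA))^2$. The classical divisor bound then yields $\tau_F((n))\ll_\epsilon N_{F/\Q}(n)^{\epsilon}$.

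The trace term is controlled through Hecke eigenforms. Fix a Hecke eigenbasis $\{f_j\}_{1\le j\le d_F}$ of $S_{(2,2)}(\SL_2(\fo_F))$; here $d_F$ depends only on $F$. With the normalization \eqref{eq:TnFbTn} at weight $(2,2)$ we have $T_{n,F}=\bT_n$, and its eigenvalue on $f_j$ is the $n$-th Fourier coefficient in the normalization for which the Ramanujan bound reads $|\lambda_j(n)|\le N_{F/\Q}(n)^{1/2}\tau_F((n))$. This bound for parallel weight $(2,2)$ Hilbert eigenforms is Brylinski--Labesse / Blasius (Ramanujan--Petersson for Hilbert modular forms). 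Summing over $j$ gives $|\tr T_{n,F}|\le d_F\,N_{F/\Q}(n)^{1/2+\epsilon}$.

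The main obstacle I expect is confirming the normalization, namely that at weight $(2,2)$ the eigenvalues of $T_{n,F}$ indeed grow like $N_{F/\Q}(n)^{1/2}$ times a divisor function. Once that is settled, the Ramanujan bound gives a better exponent than needed, and the trace is dominated by the divisor term.

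If one prefers to avoid the full Ramanujan conjecture, the trivial Hecke bound $|\lambda_j(n)|\ll N_{F/\Q}(n)^{1/2+1/2}$ would also suffice, with the $\epsilon$ coming only from the divisor function. This bound follows from the boundedness of $y_1y_2|f_j(z)|$ on a fundamental domain, by the standard argument via Fourier coefficients: $|a(n)|\ll N_{F/\Q}(n)^{\kappa/2}$ for the unnormalized coefficients, giving eigenvalues $\ll N_{F/\Q}(n)$. Either bound, combined with the divisor estimate, gives $\sum_t H_F(4n-t^2)\ll_\epsilon N_{F/\Q}(n)^{1+\epsilon}$.
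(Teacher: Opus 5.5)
Your argument is correct and has the same skeleton as the paper's proof. Both specialize Theorem \ref{thm:ES} to $\kappa_1=\kappa_2=2$ (where $P_2\equiv 1$ and $T_{n,F}=\bT_n$), solve for $\sum_t H_F(4n-t^2)$, bound the trace by $d_F$ times an eigenvalue bound, and finish with the divisor bound.

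The only difference is the eigenvalue input. The paper uses the trivial bound that every eigenvalue of $\bT_n$ has absolute value at most $\sum_{\fa\mid(n)}N_{F/\Q}(\fa)$. This is the degree of the Hecke operator, i.e.\ $\|h_n\|_{L^1}$ via Lemma \ref{lem:hecke1}. It yields the explicit inequality $\sum_t H_F(4n-t^2)\le 2(d_F+1)\sum_{\fa\mid(n)}N_{F/\Q}(\fa)$, with no constants depending on individual eigenforms. It also needs no deep input and sidesteps the normalization worry you flagged.

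Your Ramanujan route is also valid. Theorem \ref{thm:GRC}, together with multiplicativity, the Hecke recursion $\lambda'(p^k)=U_k(\lambda'(p)/2)$ and $|U_k(x)|\le k+1$ on $[-1,1]$, gives $|\lambda_j(n)|\le N_{F/\Q}(n)^{1/2}\tau_F((n))$. This buys more information: the cuspidal contribution is $O_\epsilon(N_{F/\Q}(n)^{1/2+\epsilon})$, so the divisor term is genuinely the main term, consistent with Corollary \ref{cor:main3}.

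Your fallback via Hecke's bound on Fourier coefficients also works. It does require the standard identification of eigenvalues with the coefficients of a normalized eigenform, which the paper's operator-norm bound avoids.
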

\begin{proof}
    The eigenvalues of $\bT_n$ admit the trivial upper bound $\sum_{\fa\mid (n)}N_{F/\Q}(\fa)$. 
In other words, by Theorem \ref{thm:ES} with $\kappa_1=\kappa_2=2$, it follows that
\[
\sum_{t\in\fo_F}H_F(4n-t^2) \le 2(\dim S_{(2,2)}(\SL_2(\fo_F))+1) \sum_{\fa\mid (n)}N_{F/\Q}(\fa).
\]
This completes the proof.
\end{proof}

\subsection{The distribution of generalized Hurwitz class numbers \texorpdfstring{$H_F(n)$}{HF(n)}}
In Corollary \ref{cor:main4}, we established an upper bound for $\sum_{t\in\fo_F} H_F(4n-t^2)$. In this paper, we further derive an explicit trace formula for the resolvent of Hecke operators from Theorem \ref{thm:ES}, and thereby prove an optimal estimate and an equidistribution theorem for $\sum_{t\in\fo_F} H_F(4p^\nu-t^2)$, where $p$ is a totally positive prime element in $\fo_F$.
For a totally positive prime element $p\in\fo_F$ and $X\in\C$ , we now consider the following operator on $S_{\underline{\kappa}}(\SL_2(\fo_F))$:
\begin{align*}
    (\mathbb{T}_p'-(X+X^{-1})\id)^{-1}
\end{align*} 
where we set
\begin{equation}\label{eq:bTn'bTn}
\mathbb{T}_n'\coloneqq N_{F/\Q}(n)^{-\frac{1}{2}}\mathbb{T}_n    
\end{equation}
for $n\in\fo_F$, $n\gg 0$. 
This makes sense if $|X|$ is sufficiently small because $\dim S_{\underline{\kappa}}(\SL_2(\fo_F))<\infty$. (In fact, by a result on the generalized Ramanujan conjecture for Hilbert modular forms, it can be defined for $|X|<1$, which will be mentioned later in Theorem \ref{thm:GRC}.)
\begin{theorem}\label{RTF}
Let $p$ be a totally positive prime element in $\fo_F$. For any $X\in\C$ such that $|X|$ is sufficiently small, we have
\begin{align*}
    \tr\, (\mathbb{T}_p'-(X+X^{-1}))^{-1}=&
    -\frac{1}{48}B_{2,\chi_D}(\kappa_1-1)(\kappa_2-1)\frac{X}{1-q^{-1}X^2}\\
    &-\frac{X}{2}\sum_{n=0}^{\infty}A_{\underline{\kappa}}(p^{n})(q^{-\frac{1}{2}}X)^n\\
    &+\begin{cases}
    \frac{-X}{(1-q^{\frac{1}{2}}X)(1-q^{-\frac{1}{2}}X)}\quad &if\ \kappa_1=\kappa_2=2,\\
    0 & otherwise,
    \end{cases}
\end{align*}
where $q:=N_{F/\Q}(p)$ and
$
A_{\underline{\kappa}}(p^n):=\sum_{4p^n-t^2\gg0} H_F(4p^n-t^2)\, U_{\kappa_1-2}\left( \frac{t}{2p^{\frac{n}{2}}} \right)\, U_{\kappa_2-2}\left( \frac{\sigma(t)}{2\sigma(p)^{\frac{n}{2}}} \right)
$.
\end{theorem}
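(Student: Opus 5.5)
The plan is to expand the resolvent as a power series in $X$ using the Hecke recursion at the prime $p$. Then I will evaluate each coefficient with Theorem \ref{thm:ES} and resum the resulting series.

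\textbf{Step 1: Hecke recursion.} Since the narrow class number of $F$ is $1$, the central character is trivial and $p$ generates a prime ideal of norm $q$. The Hecke operators then satisfy
\[
\bT_p\bT_{p^n}=\bT_{p^{n+1}}+q\,\bT_{p^{n-1}} \qquad (n\ge 1,\ \bT_{1}=\id).
\]
I take this from \eqref{eq:bTn} or \cite{Gar90}. After the normalisation \eqref{eq:bTn'bTn}, this becomes $\bT'_p\bT'_{p^n}=\bT'_{p^{n+1}}+\bT'_{p^{n-1}}$. Hence $\bT'_{p^n}=U_n(\bT'_p/2)$, where $U_n$ is the $n$-th Chebyshev polynomial of the second kind.

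\textbf{Step 2: Generating function.} The generating identity $\sum_{n\ge0}U_n(x)X^n=(1-2xX+X^2)^{-1}$ gives
\[
(\bT'_p-(X+X^{-1}))^{-1}=-X\,(1-X\bT'_p+X^2)^{-1}=-X\sum_{n\ge0}\bT'_{p^n}X^n .
\]
This is valid as an identity of operators for $|X|$ small, because $S_{\underline{\kappa}}(\SL_2(\fo_F))$ is finite dimensional. Taking traces reduces the theorem to computing $\tr\bT'_{p^n}$ for every $n$.

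\textbf{Step 3: Coefficients.} By \eqref{eq:TnFbTn} and \eqref{eq:PU}, the factor $p^{n(\kappa_1/2-1)}\sigma(p)^{n(\kappa_2/2-1)}$ appears on both sides of Theorem \ref{thm:ES} and cancels. This leaves
\[
\tr\bT_{p^n}=\tfrac12\sum_{t}H_F(4p^n-t^2)\,U_{\kappa_1-2}\Bigl(\tfrac{t}{2p^{n/2}}\Bigr)U_{\kappa_2-2}\Bigl(\tfrac{\sigma(t)}{2\sigma(p)^{n/2}}\Bigr)\;-\;\delta_{\underline{\kappa}}\sum_{j=0}^nq^j ,
\]
where $\delta_{\underline{\kappa}}=1$ if $\kappa_1=\kappa_2=2$ and $0$ otherwise. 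Here I use that the divisors of $(p^n)$ are the ideals $(p^j)$ for $0\le j\le n$.

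I split the $t$-sum into two parts:
\begin{itemize}
\item The terms with $4p^n-t^2\gg0$ give exactly $A_{\underline{\kappa}}(p^n)$.
\item The only other nonzero terms are those with $4p^n-t^2=0$. These occur only for $n=2m$ even, with $t=\pm2p^m$ up to $U_F$. There $U_{\kappa_i-2}(\pm1)=\kappa_i-1$, since $\kappa_i$ is even. The two signs together give $2H_F(0)(\kappa_1-1)(\kappa_2-1)$, with $H_F(0)=\frac1{48}B_{2,\chi_D}$.
\end{itemize}
One must fix the square roots $p^{n/2}$ and $\sigma(p)^{n/2}$ consistently so that this is well defined. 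Since $U_{\kappa-2}$ is an even polynomial, the sign choice is harmless.

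\textbf{Step 4: Resummation.} Multiplying by $q^{-n/2}$ and summing $-X\sum_n(\cdot)X^n$ gives three contributions.
\begin{itemize}
\item The $A$-part produces $-\frac X2\sum_nA_{\underline{\kappa}}(p^n)(q^{-1/2}X)^n$.
\item The $H_F(0)$-part produces $-X\cdot\frac1{48}B_{2,\chi_D}(\kappa_1-1)(\kappa_2-1)\sum_m(q^{-1}X^2)^m$, which is the first term of the theorem.
\item The Eisenstein-type part uses $\sum_nX^n\sum_{j=0}^nq^{j-n/2}=\bigl((1-q^{1/2}X)(1-q^{-1/2}X)\bigr)^{-1}$. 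This yields the rational correction $\pm X\bigl((1-q^{1/2}X)(1-q^{-1/2}X)\bigr)^{-1}$ when $\kappa_1=\kappa_2=2$.
\end{itemize}

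\textbf{Main obstacle.} The main point of care is bookkeeping of normalisations and signs. This includes the exact Hecke recursion for $\bT_n$ as defined in \eqref{eq:bTn}, which fixes whether the factor $q$ appears and hence the Chebyshev form. It also includes the sign of the correction term in Theorem \ref{thm:ES}, which must be propagated through the factor $-X$ to produce the stated sign of the last term. I would check these against the simplest case $n=0,1$ of the coefficient identity.
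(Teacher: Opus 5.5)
Your route is the same as the paper's. You write the trace of the resolvent as $-X\sum_{n\ge0}\tr\mathbb{T}'_{p^n}X^n$; the paper does this eigenform by eigenform via \eqref{Id5} and the recursion $\mathbb{T}'_{p^{\nu+1}}=\mathbb{T}'_p\mathbb{T}'_{p^\nu}-\mathbb{T}'_{p^{\nu-1}}$. You then insert Theorem \ref{thm:ES}, separate the terms with $4p^n-t^2=0$, and sum geometric series. Steps 1--3 and the first two contributions in Step 4 are correct. One small inaccuracy: $t^2=4p^{2m}$ forces $t=\pm2p^m$ exactly, so there is no freedom ``up to $U_F$''. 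Your count of two terms is still right.

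The sign you deferred is not bookkeeping, and it resolves against the statement. By your Step 3, the $n$-th coefficient contains $-\delta_{\underline{\kappa}}\,q^{-n/2}\sum_{j=0}^{n}q^j$, and the overall factor $-X$ turns this into
\[
+\delta_{\underline{\kappa}}\,X\sum_{n\ge0}X^n\sum_{j=0}^{n}q^{\,j-\frac n2}=+\delta_{\underline{\kappa}}\,\frac{X}{(1-q^{\frac12}X)(1-q^{-\frac12}X)},
\]
which is the opposite of the printed sign.

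Your proposed check at $n=0$ confirms this. The $X$-coefficient of the left side is $-\tr\mathbb{T}'_1=-\dim S_{\underline{\kappa}}(\SL_2(\fo_F))$. Theorem \ref{thm:ES} with $n=1$ gives $\dim S_{\underline{\kappa}}(\SL_2(\fo_F))=H_F(0)(\kappa_1-1)(\kappa_2-1)+\frac12A_{\underline{\kappa}}(1)-\delta_{\underline{\kappa}}$. So the last term must begin with $+\delta_{\underline{\kappa}}X$.

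A concrete instance: take $D=5$ and $\underline{\kappa}=(2,2)$, where the space is zero. Then $H_F(0)=\frac1{60}$ and $A_{(2,2)}(1)=\frac12+\frac23+\frac45=\frac{59}{30}$. The printed right-hand side has $X$-coefficient $-\frac1{60}-\frac{59}{60}-1=-2\neq0$.

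The printed term $-X/\bigl((1-q^{\frac12}X)(1-q^{-\frac12}X)\bigr)$ is $1/\bigl(\lambda-(X+X^{-1})\bigr)$ at the trivial-representation eigenvalue $\lambda=q^{\frac12}+q^{-\frac12}$. In other words, it adds $\mathscr{R}(\underline{\kappa},h_{p^n})$ to the cuspidal trace instead of subtracting it.

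So no bookkeeping will give the stated sign. Your argument, like the paper's (which just says to apply \eqref{Id5} again to the third term), proves the formula with $+X/\bigl((1-q^{\frac12}X)(1-q^{-\frac12}X)\bigr)$, and that is the version you should state. This $+$ sign is also what the proof of Theorem \ref{Equid} needs: the main term of \eqref{Asymp2} comes from it. With the printed sign, one would get $A_{(2,2)}(p^\nu)\to-\infty$, which is impossible since $A_{(2,2)}(p^\nu)\ge0$.
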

As an application of the resolvent trace formula, we establish an optimal estimate for a weighted average of the generalized Hurwitz class numbers.

Let us define a measure $\mu$ on the region $\cD:=[-1,1]\times[-1,1]$ by
$$
\langle\mu,f\rangle:=\frac{4}{\pi^2}\int_{-1}^{1}\int_{-1}^{1}f(x,y)\sqrt{1-x^2}\sqrt{1-y^2}\,dx\,dy,\quad f\in C^{0}(\cD),
$$
and set two subspaces of polynomial rings as
$$
V=\{f\in\C[x,y]\mid f(x,y)=f(x,-y)=f(-x,y)\},\quad V_0:=\{f\in V\mid \langle\mu,f\rangle=0\}.
$$
By the orthogonality of the Chebyshev polynomials, the subspace $V_0$ can also be identified with the subspace ${\rm Span}\{U_{l_1}(x)U_{l_2}(y)\mid (l_1,l_2)\in(\Z_{\ge0})^2-\{(0,0)\}\}$. 
For $f\in V$ and $\alpha>0$, let us consider the following statement:
$$
(E_{f,\alpha}):\forall\epsilon>0,\quad 
\sup_{\nu\in\N}\left|
q^{-\nu(\alpha+\epsilon)}\sum_{\substack{t\in\fo_F\\4p^{\nu}-t^2\gg0}}
H_F(4p^{\nu}-t^2)f\left(\tfrac{t}{2\sqrt{4p^\nu}},\tfrac{\sigma(t)}{2\sqrt{\sigma(4p^\nu)}}\right)
\right|<+\infty.
$$

A natural question to ask here is: what is the optimal choice of $\alpha$?
The following statement answers this question and may be viewed as a generalization of the result for classical Hurwitz class numbers \cite{ST 20}*{Theorem 3}.

\begin{corollary}\label{Optimal}
Let $p\in\fo_F$ be a totally positive prime element and $\alpha>0$.
\begin{itemize}
    \item[(i)] If $\alpha\ge\frac{1}{2}$, the statement $(E_{f,\alpha})$ holds for any $f\in V_0$. If $0<\alpha<\frac{1}{2}$, there exists $f\in V_0$ such that the statement $(E_{f,\alpha})$ does not hold.
    \item[(ii)] If $\alpha\ge1$, the statement $(E_{f,\alpha})$ holds for any $f\in V$. If $0<\alpha<1$, there exists $f\in V-V_0$ such that the statement $(E_{f,\alpha})$ does not hold. 
\end{itemize} 
\end{corollary}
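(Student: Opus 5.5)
The plan is to read off everything from Theorem \ref{thm:ES} applied to $n=p^\nu$, after reducing to a basis of $V$. I read the arguments of $f$ as $\bigl(\tfrac{t}{2p^{\nu/2}},\tfrac{\sigma(t)}{2\sigma(p)^{\nu/2}}\bigr)$, matching $A_{\underline{\kappa}}$ in Theorem \ref{RTF}. The polynomial $U_l$ has the parity of $l$. Hence $V$ is spanned by $U_{l_1}(x)U_{l_2}(y)$ with $l_1,l_2$ even, and $V_0$ by those with $(l_1,l_2)\neq(0,0)$. Since $(E_{f,\alpha})$ is preserved under finite linear combinations, it suffices to treat $f=U_{l_1}U_{l_2}$.

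\textbf{Step 1 (key identity).} Put $\underline{\kappa}=(l_1+2,l_2+2)$; both entries are even and at least $2$. Insert \eqref{eq:PU} into Theorem \ref{thm:ES} with $n=p^\nu$. The weights $n^{\kappa_1/2-1}\sigma(n)^{\kappa_2/2-1}$ coming from $P_{\kappa_1}P_{\kappa_2}$ cancel exactly against those in \eqref{eq:TnFbTn}. This gives
\[
\tfrac12 A_{\underline{\kappa}}(p^\nu)=\tr \mathbb{T}_{p^\nu}+\delta_{\underline{\kappa}=(2,2)}\sum_{\fa\mid(p^\nu)}N_{F/\Q}(\fa).
\]
The $H_F(0)$-term does not occur, because $4p^\nu=t^2$ is impossible for the following reason. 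It would force $p^\nu$ to be a square, so $\nu$ is even, and then $t=\pm 2p^{\nu/2}$ sits at the boundary $x=\pm1$, where $4p^\nu-t^2=0$ is excluded by the condition $4p^\nu-t^2\gg0$ in $A_{\underline{\kappa}}$. The Eisenstein correction equals $\sum_{j=0}^{\nu}q^j=\frac{q^{\nu+1}-1}{q-1}\asymp q^\nu$.

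\textbf{Step 2 (upper bounds).} Write $\tr\mathbb{T}_{p^\nu}=q^{\nu/2}\tr\mathbb{T}'_{p^\nu}$. The Hecke relations at $p$ give $\mathbb{T}'_{p^\nu}=U_\nu(\tfrac12\mathbb{T}'_p)$ on each eigenform, since the narrow class number is $1$ and the central character is trivial. By the generalized Ramanujan conjecture (Theorem \ref{thm:GRC}), every eigenvalue of $\mathbb{T}'_p$ has the form $2\cos\theta_j$ with $\theta_j$ real, so $|U_\nu(\cos\theta_j)|\le\nu+1$. Hence $|A_{\underline{\kappa}}(p^\nu)|\le 2(\nu+1)\dim S_{\underline{\kappa}}\ll_\epsilon q^{\nu(1/2+\epsilon)}$ when $(l_1,l_2)\ne(0,0)$. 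This proves the first half of (i). Combined with the $O(q^\nu)$ Eisenstein term for $f=1$, it also gives the first half of (ii).

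\textbf{Step 3 (optimality).} For (ii), take $f=1$. By Step 1 the sum is $2\frac{q^{\nu+1}-1}{q-1}+O(\nu q^{\nu/2})\gg q^\nu$, so $(E_{1,\alpha})$ fails for every $\alpha<1$.

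For (i), choose even $\kappa_1=\kappa_2$ large enough that $S_{\underline{\kappa}}\ne0$; Corollary \ref{cor:main1} shows the dimension grows like $(\kappa_1-1)(\kappa_2-1)H_F(0)$. Then $q^{-\nu/2}\cdot\tfrac12A_{\underline{\kappa}}(p^\nu)=\sum_j \frac{\sin((\nu+1)\theta_j)}{\sin\theta_j}$, with the obvious limiting value if some $\theta_j\in\{0,\pi\}$. This is a nonzero finite exponential sum $\sum_k c_k e^{i\nu\phi_k}$ with unimodular $e^{i\phi_k}$.

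\textbf{Main obstacle.} One must show this sum does not decay to $0$, let alone decay exponentially in $\nu$. I would first try a Vandermonde argument: if $s_\nu:=\sum_k c_ke^{i\nu\phi_k}\to0$, then solving the linear system on $K$ consecutive indices (with the distinct $e^{i\phi_k}$) forces every $c_k=0$. Alternatively I would use the resolvent formula of Theorem \ref{RTF}, whose poles at $X=e^{\pm i\theta_j}$ on $|X|=1$ cannot cancel. Either route gives $|s_\nu|\ge c>0$ for infinitely many $\nu$. Hence $|A_{\underline{\kappa}}(p^\nu)|\gg q^{\nu/2}$ infinitely often, and $(E_{f,\alpha})$ fails for $f=U_{\kappa_1-2}(x)U_{\kappa_2-2}(y)\in V_0$ whenever $\alpha<\tfrac12$.
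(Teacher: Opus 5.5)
Your argument is correct and is essentially the paper's argument written out coefficient by coefficient. The paper packages the same inputs (Theorem \ref{thm:ES} at $n=p^\nu$, the Hecke recursion, and Theorem \ref{thm:GRC}) into the generating function \eqref{RTF2}. It then reads off the radius of convergence: $q^{-1/2}$ from the Eisenstein pole when $\underline{\kappa}=(2,2)$, and $1$ from the poles $\alpha_{\phi}(p)^{\pm1}$ otherwise. Cauchy's estimate finishes the proof. You instead bound $|U_\nu(\cos\theta_j)|\le\nu+1$ directly and prove non-decay by a Vandermonde argument.

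Your version is slightly more careful than the paper's. It makes explicit that the lower bound in (i) needs a weight with $S_{\underline{\kappa}}\neq0$, which the paper's claim ``radius $1$ whenever $(l_1,l_2)\neq(0,0)$'' tacitly assumes. It also yields $|s_\nu|\ge c$ infinitely often, rather than only $\limsup|s_\nu|^{1/\nu}=1$. You also correctly use $\underline{\kappa}=(l_1+2,l_2+2)$ where the paper writes $(l_1,l_2)$.

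One correction is needed in Step 1. The equation $4p^\nu=t^2$ is not impossible: for even $\nu$, $t=\pm2p^{\nu/2}\in\fo_F$ solves it. The sum in Theorem \ref{thm:ES} runs over all $t\in\fo_F$, so it does contain these two terms, each weighted by $H_F(0)=\tfrac12\zeta_F(-1)\neq0$. This is the identity contribution, which appears as $-\frac{1}{48}B_{2,\chi_D}(\kappa_1-1)(\kappa_2-1)\frac{X}{1-q^{-1}X^2}$ in Theorem \ref{RTF}. Only $A_{\underline{\kappa}}$ excludes these $t$. The correct identity is therefore
\[
\tfrac12 A_{\underline{\kappa}}(p^\nu)=\tr\mathbb{T}_{p^\nu}-[\nu\ \text{even}]\,H_F(0)(\kappa_1-1)(\kappa_2-1)+\delta_{\underline{\kappa}=(2,2)}\sum_{j=0}^{\nu}q^j .
\]
The extra term is bounded in $\nu$, so none of your conclusions change.

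Two smaller fixes. In Step 2 the bound should read $|A_{\underline{\kappa}}(p^\nu)|\le 2q^{\nu/2}(\nu+1)\dim S_{\underline{\kappa}}+O(1)$; you dropped the factor $q^{\nu/2}$. In Step 3, an eigenvalue with $\theta_j\in\{0,\pi\}$ contributes $(\nu+1)(\pm1)^\nu$. That is not a pure exponential, so your Vandermonde lemma does not literally apply to it. This case is easier, though: dividing by $\nu+1$ shows $|s_\nu|\to\infty$ along one parity class of $\nu$.
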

We conclude this section by stating the equidistribution theorem for the generalized Hurwitz class numbers. Let $\cD\coloneqq [-1,1]\times[-1,1]$ and $C_{\rm e}^{0}(\cD)$ denote the set of all continuous function on $\cD$ satisfying $f(x,y)=f(-x,y)=f(x,-y)$.
For a totally positive prime element $p\in\fo_F$ and $\nu\in\N$, let us consider the discrete measure $\mu_{p,\nu}$ on $\cD$ given by
\begin{align*}
\langle\mu_{p,\nu},f\rangle:=\frac{1-q^{-1}}{2q^{\nu}}\sum_{\substack{t\in\fo_F\\ 4p^{\nu}-t^2\gg 0}}H_F(4p^{\nu}-t^2)f\left(\tfrac{t}{2\sqrt{4p^\nu}},\tfrac{\sigma(t)}{2\sqrt{\sigma(4p^\nu)}}\right),\quad f\in C_{\rm e}^{0}[\cD].
\end{align*}
\begin{theorem}\label{Equid}
The measure $\mu_{p,\nu}$ converges $*$-weakly to the measure $\mu$ as $\nu\to\infty$. i.e.,
$$
\lim_{\nu\to\infty}\langle\mu_{p,\nu},f\rangle=\langle\mu,f\rangle
$$
holds for any $f\in C_{\rm e}^{0}(D)$.
\end{theorem}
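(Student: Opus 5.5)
The plan is the method of moments: prove convergence on the product polynomials $U_{l_1}(x)U_{l_2}(y)$ with $l_1,l_2$ even, then extend by density. By symmetry and orthogonality of the Chebyshev polynomials, $\{U_{l_1}(x)U_{l_2}(y)\}$ with $l_1,l_2$ even spans $V$. Each element of $V$ is even in each variable, so only even $l_i$ contribute. Moreover $\langle\mu,U_{l_1}U_{l_2}\rangle=\delta_{(l_1,l_2),(0,0)}$.

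The first step is a density argument. $V$ is dense in $C^0_{\rm e}(\cD)$ for the sup norm, by Stone--Weierstrass applied to polynomials in $x^2,y^2$. The measures $\mu_{p,\nu}$ have uniformly bounded total mass, which follows from the $f=1$ case of the argument below. Hence it suffices to verify the limit for each $f=U_{l_1}(x)U_{l_2}(y)$.

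I also have to match the normalization of the arguments. In the definition of $\mu_{p,\nu}$ I read the evaluation point as $\tfrac{t}{2\sqrt{p^\nu}}$, as in $A_{\underline{\kappa}}$ and \eqref{eq:PU}; if the literal $\sqrt{4p^\nu}$ is intended, this is only a rescaling, and I would adjust accordingly. With this reading, $\sum_t H_F(4p^\nu-t^2)\,f(\cdots)=A_{\underline{\kappa}}(p^\nu)$ with $\kappa_i=l_i+2$.

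The second step computes $A_{\underline{\kappa}}(p^\nu)$ via Theorem \ref{thm:ES} applied to $n=p^\nu$. Using \eqref{eq:PU}, \eqref{eq:TnFbTn} and \eqref{eq:bTn'bTn}, the formula reads
\[
\tfrac12 A_{\underline{\kappa}}(p^\nu)=q^{\nu/2}\tr\mathbb{T}'_{p^\nu}+\delta_{\underline{\kappa},(2,2)}\sum_{j=0}^{\nu}q^j .
\]
The $t=0$ term $H_F(0)$ is absent from $A$ unless the statement convention includes it. If it is included, it contributes $O(1)\cdot(\kappa_1-1)(\kappa_2-1)$ only when $4p^\nu$ is not totally positive, which never happens here, so it can be ignored.

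The Hecke operators at powers of one prime satisfy $\mathbb{T}'_{p^\nu}=U_\nu(\mathbb{T}'_p/2)$. This comes from the recursion $\mathbb{T}_{p^{\nu+1}}=\mathbb{T}_p\mathbb{T}_{p^\nu}-q\,\mathbb{T}_{p^{\nu-1}}$ for trivial central character, and it is equivalent to the resolvent identity in Theorem \ref{RTF}. By the Ramanujan bound (Theorem \ref{thm:GRC}), the eigenvalues of $\mathbb{T}'_p$ lie in $[-2,2]$. Hence $|\tr\mathbb{T}'_{p^\nu}|\le(\nu+1)\dim S_{\underline{\kappa}}$.

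Plugging this in gives
\[
\langle\mu_{p,\nu},U_{l_1}U_{l_2}\rangle=\frac{1-q^{-1}}{q^\nu}\Bigl(O\bigl((\nu+1)q^{\nu/2}\bigr)+\delta_{\underline{\kappa},(2,2)}\frac{q^{\nu+1}-1}{q-1}\Bigr).
\]
As $\nu\to\infty$ this tends to $\delta_{(l_1,l_2),(0,0)}$, because $(1-q^{-1})q^{-\nu}\cdot\frac{q^{\nu+1}-1}{q-1}\to1$. That is exactly $\langle\mu,U_{l_1}U_{l_2}\rangle$.

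The main obstacle is the bookkeeping in the second step: aligning the normalizations $T_{n,F}$, $\mathbb{T}_n$, $\mathbb{T}'_n$ and the sign convention relating $A_{\underline{\kappa}}$ to the trace. In particular I need the Eisenstein term $\sum_{\fa\mid(p^\nu)}N(\fa)$ to carry exactly the factor that yields total mass $1$ after the $\tfrac{1-q^{-1}}{2q^\nu}$ normalization.

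If one wants to avoid the Ramanujan bound, the weaker trivial bound $|\text{eigenvalue of }\mathbb{T}'_p|\le q^{1/2}+q^{-1/2}$ is not sufficient. In that case I would instead invoke Theorem \ref{RTF}, or Corollary \ref{Optimal}(i) with $\alpha=\tfrac12$, which gives $A_{\underline{\kappa}}(p^\nu)=O(q^{\nu(1/2+\epsilon)})$ for $(l_1,l_2)\ne(0,0)$. That bound suffices directly.
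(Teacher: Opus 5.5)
Your argument is correct in substance. It is the coefficientwise version of what the paper does, so the difference in route is modest.

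\textbf{Comparison with the paper.} The paper feeds Theorem~\ref{thm:ES} at all $p^\nu$ into the resolvent formula of Theorem~\ref{RTF} and rewrites it as the generating-function identity \eqref{RTF2}. It then reads off the growth of $A_{\underline{\kappa}}(p^\nu)$ from radii of convergence. For $(l_1,l_2)\neq(0,0)$ it uses the Cauchy estimate behind Corollary~\ref{Optimal}(i), which gives $A_{\underline{\kappa}}(p^\nu)=O(q^{\nu(1/2+\epsilon)})$. For $(l_1,l_2)=(0,0)$ it isolates the pole at $X=q^{-1/2}$ of the Eisenstein term.

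You instead bound each coefficient directly, using $\mathbb{T}'_{p^\nu}=U_\nu(\mathbb{T}'_p/2)$ and $|U_\nu(\cos\theta)|\le\nu+1$. The identity $\mathbb{T}'_{p^\nu}=U_\nu(\mathbb{T}'_p/2)$ is exactly the one whose generating function is Theorem~\ref{RTF}. The inputs are the same (Theorem~\ref{thm:ES} and Theorem~\ref{thm:GRC}). Your route gives the explicit rate $O(\nu q^{-\nu/2})$, supplies the density and uniform-mass step that the paper leaves implicit, and does not depend on the sign bookkeeping in the displayed resolvent formula. The paper's route additionally yields the sharpness statements of Corollary~\ref{Optimal}.

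\textbf{Points to fix.} Neither of the first two affects the conclusion.

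First, your reason for discarding $H_F(0)$ is wrong, and it is not the $t=0$ term. For even $\nu$, the elements $t=\pm 2p^{\nu/2}\in\fo_F$ satisfy $4p^\nu-t^2=0$. So the sum over $t\in\fo_F$ in Theorem~\ref{thm:ES} does contain $H_F(0)$ terms; they produce the $X/(1-q^{-1}X^2)$ term in Theorem~\ref{RTF}. By contrast, $A_{\underline{\kappa}}(p^\nu)$ excludes them. The correct relation is
\[
\tfrac12 A_{\underline{\kappa}}(p^\nu)=q^{\nu/2}\tr\mathbb{T}'_{p^\nu}+\delta_{\underline{\kappa},(2,2)}\sum_{j=0}^{\nu}q^j-\epsilon_\nu H_F(0)(\kappa_1-1)(\kappa_2-1),
\]
with $\epsilon_\nu=1$ for $\nu$ even and $0$ otherwise. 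Already for $D=5$, $\nu=0$, $\underline{\kappa}=(2,2)$, the left side is $\tfrac{59}{60}$ while your right-hand side equals $1$. Since the extra term is bounded in $\nu$, it contributes only $O(q^{-\nu})$ to $\langle\mu_{p,\nu},U_{l_1}U_{l_2}\rangle$.

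Second, the literal $\sqrt{4p^\nu}$ in the definition of $\mu_{p,\nu}$ is a typo, not a rescaling you could absorb. With that reading the measures live on $[-\tfrac12,\tfrac12]^2$ and cannot converge to $\mu$. Your reading $t/(2\sqrt{p^\nu})$ is the one the paper's proof uses.

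Smaller remarks:
\begin{enumerate}
\item The bounded-mass step needs $\mu_{p,\nu}$ to be a positive measure, i.e.\ $H_F(\xi)\ge 0$; you should state this.
\item Your fallback in the last paragraph does not avoid Ramanujan. The radius-of-convergence claim in the proof of Corollary~\ref{Optimal} is itself derived from Theorem~\ref{thm:GRC}.
\end{enumerate}
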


\section{Notation}
First, we fix the notation related to totally real number fields as follows.
\begin{itemize}
    \item Let $F$ be a totally real number field. 
    \item The degree of $F$ over $\Q$ is $m$, and $\iota_1$, $\iota_2,\dots,\iota_m$ are the $m$ embeddings of $F$ into $\R$.  
    \item We suppose that $m\ge 2$ throughout this paper. 
    \item Write $\fo_F$ for the ring of integers of $F$.
    \item Let $\A$ denote the adele ring of $F$, and $\A_f$ the finite adele ring of $F$.
    \item For each place $v$ of $F$, let $F_v$ denote the completion of $F$ at $v$.
    \item For each finite place $v$ of $F$, let $\fo_{F_v}$ denote the ring of integers of $F_v$. 
    \item Set $F_\inf\coloneqq\prod_{v\mid\inf}F_v$.
    \item Via the diagonal embedding, we regard $F$ as a subring of $\A$. In particular, via the embedding $a \mapsto (\iota_1(a),\dots,\iota_m(a))$, the field $F$ becomes a subring of $\R^m$.
    \item For $a \in F$, we define the norm as follows. 
    \[
    N_{F/\Q}(a)\coloneqq\prod_{j=1}^m\iota_j(a) . 
    \]
    \item Let $a\in F\setminus \{0\}$. The notation $a \gg 0$ means that $a$ is totally positive, that is, $\iota_1(a), \dots, \iota_m(a)$ are all positive real numbers.
\end{itemize}
Next, let us introduce the notation related to Hilbert modular forms.
\begin{itemize}
    \item $\fh\coloneqq\{z\in\C\mid \mathrm{Im}(z)>0\}$.
    \item $\GL_2(\R)^+\coloneqq\{ g\in\GL_2(\R)\mid \det(g)>0\}$. 
    \item $\GL_2(\R)^+$ acts on $\fh$ as $g\cdot z=(az+b)(cz+d)^{-1}$, $g=\begin{pmatrix}
        a&b \\ c&d
    \end{pmatrix}\in\GL_2(\R)^+$, $z\in\fh$. 
    \item $\prod_{v\mid \inf}\GL_2(F_v)^+=(\GL_2(\R)^+)^m$ acts on $\fh^m$ as $(g_j)_{1\le j\le m}\cdot (z_j)_{1\le j\le m}= (g_j\cdots z_j)_{1\le j\le m}$. 
    \item Via the diagonal embedding, $\GL_2(F)$ becomes a discrete subgroup of $\GL_2(\A)$. In particular, under the embedding $\gamma \longmapsto (\iota_1(\gamma), \dots, \iota_m(\gamma))$, 
    the group $\GL_2(F)$ is realized as a subgroup of $\GL_2(\R)^m$.
    \item For $g=\begin{pmatrix}
        a&b\\c&d
    \end{pmatrix}\in\GL_2(\R)^+$ and $z\in\fh$, we set $J(g,z)\coloneqq \det(g)^{-1/2}(cz+d)$. 
    \item $\underline{\kappa}=(\kappa_1,\dots,\kappa_m)\in (\Z_{\ge 2})^m$. 
    \item For $g=(g_1,\dots,g_m)\in(\GL_2(\R)^+)^m$ and $z=(z_1,\dots,z_m)\in\fh^m$, we set 
    \[
    J_{\underline{\kappa}} (g,z)\coloneqq \prod_{j=1}^m J(g_j,z_j)^{\kappa_j}.
    \]
    \item $\Gamma\coloneqq \GL_2(\fo_F)\cap (\GL_2(\R)^+)^m$ is regarded as a discrete subgroup of $(\GL_2(\R)^+)^m$ via $\gamma\mapsto (\iota_1(\gamma),\dots,\iota_m(\gamma))$. 
\end{itemize}
Let $\tilde{\Gamma}$ be a subgroup of $\Gamma$ of finite index.
Let $S_{\underline{\kappa}}(\tilde{\Gamma})$ denote the space of functions $\phi$ on $\fh^m$, which satisfy the following conditions:
\begin{itemize}
    \item $\phi$ is holomorphic on $\fh^m$. 
    \item For any $\gamma\in\tilde{\Gamma}$ and $z\in\fh^m$, we have
    \[
    \phi(\gamma\cdot z)=J_{\underline{\kappa}}(\gamma,z)\, \phi(z).
    \]
    \item The function $\left| \prod_{j=1}^m\mathrm{Im}(z_j)^{\frac{\kappa_j}{2}}\times \phi(z) \right|$ is bounded on $\fh^m$.
\end{itemize}
The space $S_{\underline{\kappa}}(\tilde{\Gamma})$ is the space of so-called holomorphic Hilbert cusp forms.

We further fix the following notation for algebraic groups.
\begin{itemize}
  \item Let $\rG=\GL_2$ denote the general linear group of degree $2$ over $F$.
  \item Let $Z$ denote the center of $\rG$, that is, $Z\simeq \mathbb{G}$. 
  \item Set $\overline{\rG}\coloneqq\rG/Z$, that is, $\overline{\rG}$ is the projective linear group of degree $2$ over $F$.
  \item For $x\in\rG(\A)$, we denote by $\overline{x}$ the projection of $x$ to $\overline{\rG}(\A)$. 
  \item For any subset $H$ of $\rG(\A)$, we also denote by $\overline{H}$ the image of $H$ via the projection. 
  \item For each finite place $v<\infty$, set $K_v\coloneqq\GL_2(\fo_{F_v})$, and define
  $K_f \coloneqq \prod_{v<\infty} K_v$.
  \item Let $C_c^\infty(K_f\bs G(\A_f)/K_f)$ denote the space of locally constant,
  compactly supported functions on $G(\A_f)$ that are bi-$K_f$-invariant.
  \item We choose the Haar measure $dg$ on $G(\A_f)$ so that $\int_{K_f} dg = 1$. 
\end{itemize}

\section{Conditions and consequences of known results}

\subsection{Conditions}

In what follows, we consider the following two conditions, depending on the context. 
\begin{itemize}
    \item[(I)] The (wide) class number of $F$ is $1$.
    \item[(II)] The degree $m$ of $F$ over $\Q$ is $2$. The fundamental unit $\varepsilon$ of $\fo_F^\times$ satisfies $N_{F/\Q}(\varepsilon)=-1$. 
\end{itemize}
Conditions (I) and (II) are satisfied if and only if $F$ is a real quadratic field with narrow class number $1$.

Under the assumption of Condition (I), we have
\begin{equation}\label{eq:I1}
\A^\times =F^\times \, F_\inf^\times U_f, \quad \text{where $U_f\coloneqq \prod_{v<\inf}\fo_{F_v}^\times$}.    
\end{equation}
In addition, by \eqref{eq:I1} together with the strong approximation theorem for $\SL_2$, we have
\begin{equation}\label{eq:I2}
\rG(\A)=\rG(F)\rG(F_\inf)K_f. 
\end{equation}
By \eqref{eq:I1} and \eqref{eq:I2} we obtain a diffeomorphism
\begin{equation}\label{eq:diff1}
\overline{\rG}(F)\bs \overline{\rG}(\A) /\overline{K_f} \simeq  \rG(F)\bs \rG(\A) /Z_\inf K_f \simeq \rG(\fo_F)\bs \rG(\R)^2 /Z_\inf,     
\end{equation}
where $Z_\inf\coloneqq Z(F_\inf)$.

In addition to Condition (I), we also assume Condition (II), which implies
\[
\varepsilon I_2,\quad (-\varepsilon)I_2, \quad 
\delta_\varepsilon, \quad \delta_{-\varepsilon}
\in \GL_2(\fo_F), \quad \iota_1(\varepsilon)\iota_2(\varepsilon)<0,
\] 
where we set
\[
\delta_a\coloneqq\begin{pmatrix}
    1&0 \\ 0&a
\end{pmatrix}\in \rG(F), \quad a\in F^\times.
\]
\begin{lemma} Recall $\Gamma\coloneqq \rG(\fo_F)\cap (\GL_2(\R)^+)^2$ and set $\Gamma_1\coloneqq\SL_2(\fo_F)$. 
It is obvious that $S_{\underline{\kappa}}(\Gamma)$ is a subspace of $S_{\underline{\kappa}}(\Gamma_1)$. Furthermore, we have:
    \begin{itemize}\label{lem:vanishing}
        \item[(1)] If $\kappa_1+\kappa_2$ is odd, then $S_{\underline{\kappa}}(\Gamma_1)=0$. 
        \item[(2)] If $\kappa_1$ and $\kappa_2$ are odd, then $S_{\underline{\kappa}}(\Gamma)=0$. 
        \item[(3)] If $\kappa_1$ and $\kappa_2$ are even, then $S_{\underline{\kappa}}(\Gamma)=S_{\underline{\kappa}}(\Gamma_1)$. 
    \end{itemize}
\end{lemma}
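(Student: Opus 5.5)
The approach is to test the automorphy condition against specific elements of $\Gamma_1$ or $\Gamma$ that act trivially on $\fh^2$, namely scalar matrices. For such an element $\gamma$ the condition reads $\phi(z)=J_{\underline{\kappa}}(\gamma,z)\,\phi(z)$. Hence $\phi=0$ as soon as the automorphy factor $J_{\underline{\kappa}}(\gamma,z)$ is a constant different from $1$. Recall $J(g,z)=\det(g)^{-1/2}(cz+d)$. For a scalar matrix $aI_2$ with $a\in\R^\times$ we get $J(aI_2,z)=|a|^{-1}a=\mathrm{sgn}(a)$. So for $\gamma=aI_2$ with $a\in\fo_F^\times$,
\[
J_{\underline{\kappa}}(\gamma,z)=\mathrm{sgn}(\iota_1(a))^{\kappa_1}\,\mathrm{sgn}(\iota_2(a))^{\kappa_2}.
\]

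For (1), take $\gamma=-I_2\in\Gamma_1$. The factor is $(-1)^{\kappa_1+\kappa_2}=-1$ when $\kappa_1+\kappa_2$ is odd, so $\phi=-\phi$ and $\phi=0$.

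For (2), use Condition (II). The matrix $\varepsilon I_2$ has determinant $\varepsilon^2\gg0$, so it lies in $\Gamma$. Since $\iota_1(\varepsilon)\iota_2(\varepsilon)<0$, exactly one of the signs $\mathrm{sgn}(\iota_j(\varepsilon))$ is $-1$. The factor is therefore $(-1)^{\kappa_1}$ or $(-1)^{\kappa_2}$, which equals $-1$ when both weights are odd. Hence $S_{\underline{\kappa}}(\Gamma)=0$.

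For (3), the inclusion $S_{\underline{\kappa}}(\Gamma)\subset S_{\underline{\kappa}}(\Gamma_1)$ is stated, so I only need the reverse. I will show $\Gamma=\fo_{F,+}^\times\cdot\Gamma_1$, meaning every $\gamma\in\Gamma$ is $uI_2\cdot\gamma_1$ with $u$ a unit and $\gamma_1\in\Gamma_1$. Here $\det\gamma$ is a totally positive unit, so $\det\gamma\in U_F=\langle\varepsilon^2\rangle$, say $\det\gamma=\varepsilon^{2k}$. Then $\gamma_1\coloneqq\varepsilon^{-k}\gamma$ has determinant $1$, so $\gamma_1\in\Gamma_1$.

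To conclude (3), let $\phi\in S_{\underline{\kappa}}(\Gamma_1)$ and note that $J_{\underline{\kappa}}$ satisfies the cocycle relation $J_{\underline{\kappa}}(gh,z)=J_{\underline{\kappa}}(g,hz)J_{\underline{\kappa}}(h,z)$. Scalars act trivially on $\fh^2$, and $J_{\underline{\kappa}}(\varepsilon^kI_2,z)=1$ because both $\kappa_j$ are even. Therefore
\[
\phi(\gamma z)=\phi(\gamma_1 z)=J_{\underline{\kappa}}(\gamma_1,z)\phi(z)=J_{\underline{\kappa}}(\gamma,z)\phi(z),
\]
so $\phi\in S_{\underline{\kappa}}(\Gamma)$. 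The holomorphy and boundedness conditions are the same for both groups.

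The only step needing real input is the identity $U_F=\langle\varepsilon^2\rangle$. It uses $N(\varepsilon)=-1$ from (II): a unit $\pm\varepsilon^k$ is totally positive only when $k$ is even and the sign is $+$. I expect no serious obstacle beyond checking the sign conventions for $J$ at scalar matrices.
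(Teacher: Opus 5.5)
Your proof is correct and is essentially the paper's argument: the paper's one-line proof appeals to the definition and to the action of unit matrices such as $\varepsilon I_2$, and your factorization $\gamma=\varepsilon^{k}\gamma_1$ with $\gamma_1\in\Gamma_1$ spells out part (3), which the paper leaves implicit. One notational slip: $\varepsilon^{k}$ is not totally positive for odd $k$, so the identity should read $\Gamma=\fo_F^\times\cdot\Gamma_1$ rather than $\fo_{F,+}^\times\cdot\Gamma_1$ (the latter does not contain $\varepsilon I_2$); for such $k$ it is precisely the evenness of $\kappa_1,\kappa_2$ that gives $J_{\underline{\kappa}}(\varepsilon^{k}I_2,z)=1$.
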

    \begin{proof}
        This follows from the definition and from the actions of $\delta_\varepsilon$ and $\varepsilon I_2$.
    \end{proof}

Using $\delta_{\pm\varepsilon}$ and $\det\colon G\to \mathbb{G}_m$, we obtain
\begin{equation}\label{eq:diff2}
\rG(F)\backslash \rG(\A) /Z_\inf K_f\simeq \SL_2(F)\bs \SL_2(\A)/ Z_{1,\infty} K_{1,f} \simeq  \overline{\Gamma_1} \backslash \PSL_2(\R)^2,  
\end{equation}
where $Z_{1,\infty}\coloneqq \{(\pm I_2,\pm I_2)\in Z(F_\infty)\}$,  $K_{1,f}\coloneqq\prod_{v<\infty}\SL_2(\fo_{F_v})$, $\overline{\Gamma_1}\coloneqq \Gamma_1/\{\pm I_2\}$, and $\PSL_2(\R)\coloneqq\SL_2(\R)/\{\pm I_2\}$. 
This diffeomorphism implies that the interpretation in terms of $\rG(\A)$ can be
applied precisely to holomorphic Hilbert cusp forms. 
A holomorphic Hilbert cusp form $\phi\in S_{\underline{\kappa}}(\Gamma)$ is, by \eqref{eq:diff2}, defined as a function $F_\phi$ on $\rG(F)\bs \rG(\A)/Z_\infty K_f$ as follows:
\begin{equation}\label{eq:lift}
F_\phi(z\gamma g k)\coloneqq   J_{\underline{\kappa}}( g,\underline{i})^{-1} \, \phi(g\cdot \underline{i}), \quad  z\in Z_\infty,\;\; \gamma\in \rG(F), \;\; g\in (\GL_2(\R)^+)^2, \;\; k\in K_f,    
\end{equation}
where $\underline{i}=(i,i)$ $(i=\sqrt{-1})$. 
Note that $\delta_\varepsilon\cdot \underline{i}\notin\fh^2$ and we suppose $\phi(z)=0$ if $z\notin\fh^2$. 
We identify $\phi$ with $F_\phi$, and then $S_{\underline{\kappa}}(\tilde{\Gamma})$ can be regarded as a sub-Hilbert space of $L^2(\rG(F)\bs \rG(\A) /Z_\inf K_f)$.
If $\kappa_1$ and $\kappa_2$ are odd, then any non-zero element of $S_{\underline{\kappa}}(\Gamma_1)$ can not be regarded as a function in $L^2(\rG(F)\bs \rG(\A) /Z_\inf K_f)$, since $J_{\underline{\kappa}}((\iota_1(\varepsilon)I_2,\iota_2(\varepsilon)I_2)g,z)=-J_{\underline{\kappa}}(g,z)$. 

\subsection{Adelic Hecke operator}\label{sec:Hecke operators}

We review Hecke operators.
For details, we refer to \cite{Gar90}.

Assume Condition {\rm (I)}, that is, every ideal of $\fo_F$ is principal.
Take an element $n\in \fo_F$. 
We define the set $\fT_n$ by 
\[
\fT_n \coloneqq \{ x \in \M_2(\fo_F)\cap \rG(F) \mid \det(x)\in \fo_F^\times\, n \}  .
\]
By the relation
\begin{equation}\label{eq:hecke1}
\fT_n = \rG(F_\inf) \cap (\rG(F) \, T_{n,\A_f}),    
\end{equation}
we define the compact subset $T_{n,\A_f}$ of $G(\A_f)$. 
Let $h_n$ denote the characteristic function of $T_{n,\A_f}$ on $\rG(\A_f)$. 
Note that $h_n$ is bi-$K_f$-invariant, that is, $h_n \in C_c^\inf(K_f\bs \rG(\A_f)/K_f)$. 
For the adelic interpretation of Hecke operators, we introduce 
the following integral operator $R_\rG(h_n)$ on
$L^2(\rG(F)\bs \rG(\A) /Z_\inf K_f)$:
\[
(R_\rG(h_n)F)(x)\coloneqq\int_{\rG(\A_f)} h_n(g) \, F(xg) \, dg
,\quad F\in L^2(\rG(F)\bs \rG(\A) /Z_\inf K_f). 
\]

\subsection{Hecke eigenvalue and Ramanujan conjecture}\label{sec:discrete Hecke}

We assume Conditions (I) and (II). 
Under the assumptions, for any principal ideal $(n)$, we may suppose $n\gg 0$ without loss of generality.
Set
\[
\mathbf{T}_n \coloneqq \{ x \in \M_2(\fo_F) \mid \det(x)= n \} \quad (\subset(\GL_2(\R)^+)^2).
\]
The set $\mathbf{T}_n$ defines a Hecke operator $\bT_n$ on
$S_{\underline{\kappa}}(\Gamma)$ by
\begin{equation}\label{eq:bTn}
(\bT_n \phi)(z)
\coloneqq
\sum_{\gamma \in \Gamma \backslash \mathbf{T}_n}
J_{\underline{\kappa}}(\gamma,z)^{-1} \phi(\gamma \cdot z),
\qquad
\phi \in S_{\underline{\kappa}}(\Gamma).    
\end{equation}
\begin{lemma}\label{lem:hecke1}
In this setting, the following holds as an adelic interpretation of Hecke operators:
\[
R_\rG(h_n)F_\phi = F_{\bT_n\phi}.
\]    
\end{lemma}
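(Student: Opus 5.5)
We will prove the identity by unfolding $R_\rG(h_n)F_\phi$ at a point $x=g_\infty\in\rG(F_\inf)$ and then extending to all of $\rG(\A)$. Two facts allow this reduction. First, both sides are left $\rG(F)$-invariant and right $Z_\inf K_f$-invariant functions. For the left side this holds because $h_n$ is bi-$K_f$-invariant; for the right side it holds by construction \eqref{eq:lift}. Second, \eqref{eq:I2} gives $\rG(\A)=\rG(F)\rG(F_\inf)K_f$. So it suffices to compare the two functions at $x=g_\infty$. Composing with $\delta_{\pm\varepsilon}$ (as in \eqref{eq:diff2}), we may further assume $g_\infty\in(\GL_2(\R)^+)^2$.

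Next we decompose the support of $h_n$. Write $T_{n,\A_f}=\bigsqcup_i \xi_i K_f$ as a finite disjoint union of right $K_f$-cosets. Since $\int_{K_f}dg=1$, this gives
\[
(R_\rG(h_n)F_\phi)(g_\infty)=\sum_i F_\phi(g_\infty\xi_i).
\]
We then need rational representatives. By \eqref{eq:hecke1} and the class number one assumption, each coset $\xi_iK_f$ contains an element $\alpha_i^{-1}$ with $\alpha_i\in\fT_n$. Concretely, $T_{n,\A_f}$ is the set of $x\in\prod_v\M_2(\fo_{F_v})$ with $\det x\in n\,U_f$. Each right $K_f$-coset is the finite part of the matrix $\gamma^{-1}$ with $\gamma$ running through $\Gamma\bs\mathbf{T}_n$, via the standard upper triangular representatives $\begin{pmatrix}a&b\\0&d\end{pmatrix}$ with $ad=n$ up to units and $b$ modulo $d$. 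The key bookkeeping step is to establish a bijection between $\Gamma\bs\mathbf{T}_n$ and $T_{n,\A_f}/K_f$, given by $\gamma\mapsto \gamma^{-1}K_f$ or $\gamma^{\iota}K_f$ after adjusting by the scalar $n$. This uses strong approximation for $\SL_2$, together with the fact from (II) that the determinant can be made totally positive by multiplying by units. The unit adjustments are harmless because $\varepsilon I_2$ and $\delta_{\pm\varepsilon}$ lie in $\GL_2(\fo_F)$.

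With these representatives we use left $\rG(F)$-invariance. Take $\gamma\in\mathbf{T}_n$ with finite part equal to $\xi_i k$ up to $K_f$. Then
\[
F_\phi(g_\infty\xi_i)=F_\phi(\gamma\, g_\infty\xi_i)=F_\phi(\gamma_\infty g_\infty\cdot(\gamma_f\xi_i)),
\]
and $\gamma_f\xi_i\in K_f$, possibly after replacing $\xi_i$ by the adjugate representative, so that the scalar $n$ goes into $Z(F)$ and is absorbed. By \eqref{eq:lift} this equals $J_{\underline{\kappa}}(\gamma g_\infty,\underline{i})^{-1}\phi(\gamma g_\infty\cdot\underline{i})$. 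The cocycle relation $J_{\underline{\kappa}}(\gamma g,\underline i)=J_{\underline{\kappa}}(\gamma,g\underline i)J_{\underline{\kappa}}(g,\underline i)$ rewrites this as
\[
J_{\underline{\kappa}}(g_\infty,\underline i)^{-1}\,J_{\underline{\kappa}}(\gamma,z)^{-1}\phi(\gamma z),\qquad z=g_\infty\underline i .
\]
The normalization $\det^{-1/2}$ built into $J$ makes the central factor $n$ disappear, with no stray power of $\det$. Summing over $i$ then gives $F_{\bT_n\phi}(g_\infty)$ by \eqref{eq:bTn}.

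The main obstacle is getting the coset bijection and its direction right. It must be $\gamma$ versus $\gamma^{-1}$ (equivalently the adjugate $n\gamma^{-1}$), and it must respect the sign and unit conventions, so that each $\Gamma$-orbit in $\mathbf{T}_n$ corresponds to exactly one $K_f$-coset with no multiplicity from $\fo_F^\times$. To handle this, we will first check the bijection locally at each $v\mid n$ using elementary divisors. We will then glue the local statements using (I) and the fact that the totally positive units are $U_F=\langle\varepsilon^2\rangle$, matching $\Gamma$-orbits with $\fT_n/\fo_F^\times$ through $\delta_{\pm\varepsilon}$.
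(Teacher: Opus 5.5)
Your proposal is correct and is essentially the paper's argument. You choose rational $\beta_t\in\rG(F)\cap\M_2(\fo_F)$ with $T_{n,\A_f}=\bigsqcup_t\mathrm{pr}_f(\beta_t^{-1})K_f$ via \eqref{eq:I2}, use left $\rG(F)$-invariance to turn $F_\phi(x\,\mathrm{pr}_f(\beta_t^{-1}))$ into $F_\phi(\mathrm{pr}_\infty(\beta_t)\,x)$, and finish with the cocycle relation for $J_{\underline{\kappa}}$. The coset bookkeeping you single out as the main obstacle is immediate in the paper, with no local elementary-divisor check: \eqref{eq:hecke1} is set up so that this decomposition translates directly into $\fT_n=\bigsqcup_t\rG(\fo_F)\beta_t$, hence $\Gamma\bs\mathbf{T}_n=\bigsqcup_t\Gamma\beta_t$ once $\det\beta_t\gg0$ by (II). Your alternative ``concrete'' description of $T_{n,\A_f}$ is the inverse of the set that \eqref{eq:hecke1} defines, but this is harmless since $F_\phi$ is invariant under the center.
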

\begin{proof}
Write $\mathrm{pr}_f$ (resp. $\mathrm{pr}_\inf$) the projection from $\rG(\A)$ to $\rG(\A_f)$ (resp. $\rG(F_\inf)$). 
It follows from \eqref{eq:I2} that there are elements $\beta_1,\dots,\beta_w\in \rG(F)\cap \M_2(\fo_F)$ such that
\[
T_{n,\A_f}=\bigsqcup_{t=1}^w  \mathrm{pr}_f(\beta_t^{-1})\, K_f \quad \text{and} \quad \det(\beta_t)\in (n). 
\]
By Condition (II), we may suppose $\beta_1,\dots,\beta_w\in \rG(F)\cap (\GL_2(\R)^+)^2$ without loss of generality. 
On the other hand, by \eqref{eq:hecke1},
\[
\fT_n = \bigsqcup_{t=1}^w \rG(\fo_F) \, \mathrm{pr}_\inf(\beta_t), 
\]
which implies $\Gamma\bs\mathbf{T}_n = \bigsqcup_{t=1}^w\Gamma\beta_t$. 
Thus, for $x\in (\GL_2(\R)^+)^2$,
\begin{multline*}
(R_\rG(h_n)F_\phi)(x)=\sum_{t=1}^w F_\phi(x\, \mathrm{pr}_f(\beta_t^{-1}))=\sum_{t=1}^w F_\phi(\mathrm{pr}_\inf(\beta_t)\, x)=\sum_{t=1}^w J_{\underline{\kappa}}(\beta_t x,\underline{i})^{-1}\phi(\beta_t x\cdot \underline{i}) \\
= J_{\underline{\kappa}}(x,\underline{i})^{-1} \sum_{t=1}^w J_{\underline{\kappa}}(\beta_t ,x\cdot\underline{i})^{-1}\phi(\beta_t\cdot (x\cdot\underline{i}))=J_{\underline{\kappa}}(x,\underline{i})^{-1} (\bT_n \phi)(x\cdot\underline{i})=F_{\bT_n \phi}(x).
\end{multline*}
Hence, this completes the proof. 
\end{proof}

It is known that the Hecke operators $\bT_n$ on $S_{\underline{\kappa}}(\Gamma)$ are commutative and simultaneously diagonalizable for all prime ideals $(n)$. Their eigenfunctions are called Hecke eigenforms, and $S_{\underline{\kappa}}(\Gamma)$ admits a basis consisting of Hecke eigenforms.
\begin{theorem}[The generalized Ramanujan conjecture]\label{thm:GRC}
Let $\phi$ be a Hecke eigenform in $S_{\underline{\kappa}}(\Gamma)$, and denote by $\lambda_\phi(n)$ the eigenvalue of $\phi$ with respect to $\bT_n'= N_{F/\Q}(n)^{-1/2}\bT_n$ for each ideal $(n)$. 
Then, we have $|\lambda_\phi(p)|\le 2$ for each prime ideal $(p)$. 
\end{theorem}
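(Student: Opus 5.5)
This is the Ramanujan bound for Hilbert modular forms, so the plan is to deduce it from the known results of Brylinski--Labesse, Blasius, and Blasius--Rogawski rather than reprove it. Their theorem says that for a cuspidal automorphic representation $\pi$ of $\rG(\A)$ attached to a holomorphic Hilbert cusp form of weight $\underline{\kappa}$ with all $\kappa_j\ge 2$ and all $\kappa_j$ of the same parity, every finite unramified component $\pi_v$ is tempered. The work is in matching our normalizations to theirs.

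\textbf{Step 1.} Take a Hecke eigenform $\phi\in S_{\underline{\kappa}}(\Gamma)$. By \eqref{eq:lift} and Lemma \ref{lem:vanishing}, the function $F_\phi$ lies in $L^2(\rG(F)\bs\rG(\A)/Z_\inf K_f)$. Let $\pi=\otimes_v\pi_v$ be the irreducible cuspidal representation it generates. Then $\pi$ has trivial central character, is unramified at every finite $v$ because $F_\phi$ is right $K_f$-invariant, and its archimedean components are discrete series of weights $\kappa_1,\kappa_2$. These weights are even, so the parity hypothesis is satisfied.

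\textbf{Step 2.} Let $v$ be the place corresponding to the prime ideal $(p)$. By Lemma \ref{lem:hecke1}, the eigenvalue of $\bT_p$ on $\phi$ equals the eigenvalue of $R_\rG(h_p)$ on the spherical vector of $\pi_v$. Here $h_p$ is the characteristic function of $K_v\,\mathrm{diag}(\varpi_v,1)K_v$ at $v$ and of $K_w$ at every $w\neq v$, where $\varpi_v$ is a uniformizer. Write $\pi_v=\mathrm{Ind}(\mu_1,\mu_2)$ with unramified characters $\mu_i$ and Satake parameters $\alpha_i=\mu_i(\varpi_v)$, so that $\alpha_1\alpha_2=1$. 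The standard computation via the Iwasawa decomposition of $K_v\,\mathrm{diag}(\varpi_v,1)K_v$ into $q+1$ right cosets, with $q=N_{F/\Q}(p)$, gives the eigenvalue $q^{1/2}(\alpha_1+\alpha_2)$. Hence $\lambda_\phi(p)=\alpha_1+\alpha_2$.

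\textbf{Step 3.} Temperedness gives $|\alpha_1|=|\alpha_2|=1$, and therefore $|\lambda_\phi(p)|\le 2$.

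The main obstacle is the normalization in Step 2: making sure the unitary normalization of the induced representation produces exactly the factor $q^{1/2}$ that \eqref{eq:bTn'bTn} removes. A further subtlety is that $\bT_n$ is defined by the condition $\det=n$, not $\det\in\fo_F^\times n$. Condition (II) and the elements $\delta_{\pm\varepsilon}$ show that these two definitions give the same coset set $\Gamma\bs\mathbf{T}_n$, as already used in the proof of Lemma \ref{lem:hecke1}. If one wants to avoid quoting temperedness directly, an alternative is to use Deligne's bound for the associated motive, via Blasius's construction of the $\ell$-adic Galois representation and the purity of its Frobenius eigenvalues, which yields the same inequality.
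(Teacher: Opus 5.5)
Your proposal is correct and follows the paper's own route. The paper likewise quotes Blasius's Ramanujan theorem, using Lemma~\ref{lem:vanishing} to guarantee that the weights are even (so the parity hypothesis holds) and Lemma~\ref{lem:hecke1} to turn temperedness of $\pi_v$ into $|\lambda_\phi(p)|\le 2$. Your Step~2 only makes explicit the Satake normalization that the paper leaves implicit; with the paper's conventions, $h_p$ is actually supported on $K_v\,\mathrm{diag}(\varpi_v^{-1},1)K_v$ (see the proof of Lemma~\ref{lem:hecke1}), which gives eigenvalue $q^{1/2}(\alpha_1^{-1}+\alpha_2^{-1})$, and this agrees with your value since $\alpha_1\alpha_2=1$.
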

\begin{proof}
    Since $S_{\underline{\kappa}}(\Gamma)=0$ when $\kappa_1$ or $\kappa_2$ is odd (see Lemma \ref{lem:vanishing}), the assertion of the theorem is established in \cite{Bla06}*{Theorem 1}.
    Moreover, in \cite{Bla06}*{Theorem 1}, the assertion is formulated in terms of automorphic representations, but by Lemma \ref{lem:hecke1} the generalized Ramanujan conjecture for the automorphic representation generated by $\phi$ can be reformulated as the corresponding statement on $\lambda_\phi(p)$.
\end{proof}

\subsection{Orbital integral}

Suppose that Condition (I) holds. 
Take a regular semisimple element $\overline{\gamma}\in  \overline{\rG}(F)$. 
We write $\overline{\rG}_{\overline{\gamma}}$ for the centralizer of $\overline{\gamma}$ in $\overline{\rG}$, that is,
\[
\overline{\rG}_{\overline{\gamma}}\coloneqq\{ g\in \overline{\rG} \mid g{\overline{\gamma}}={\overline{\gamma}} g  \}.
\]
For $h\in C_c^\inf(\rG(\A_f))$, we set
\[
\overline{h}(g)\coloneqq \int_{Z(\A_f)} h(zg)\, \rd z \in C_c^\inf(\overline{\rG}(\A_f)).
\]
Choose a Haar measure $\rd g_{\overline{\gamma}}$ on $\overline{\rG}_{\overline{\gamma}}(\A_f)$ and set
\[
J({\overline{\gamma}},\overline{h})\coloneqq \int_{\overline{\rG}_{\overline{\gamma}}(\A_f)\bs \overline{\rG}(\A_f)} \overline{h}(g^{-1}{\overline{\gamma}} g) \frac{\rd g}{\rd g_{\overline{\gamma}}},\quad h\in C_c^\inf(\rG(\A_f)),
\]
where $\frac{\rd g}{\rd g_{\overline{\gamma}}}$ is the quotient measure on $\overline{\rG}_{\overline{\gamma}}(\A_f)\bs \overline{\rG}(\A_f)$.

Suppose 
that ${\overline{\gamma}}\in \overline{G}(F)$ is regular and $\R$-elliptic. 
Since $\overline{\rG}_{\overline{\gamma}}(F_\inf)$ is compact, $\overline{\rG}_{\overline{\gamma}}(F)$ is a discrete subgroup of $\overline{\rG}_{\overline{\gamma}}(\A_f)$. 
Hence, there exists a fundamental domain $\mathcal{F}$ in $\overline{\rG}_{\overline{\gamma}}(\A_f)$ such that
\[
\int_{\overline{\rG}_{\overline{\gamma}}(F)\bs \overline{\rG}_{\overline{\gamma}}(\A)} \rd g_{\overline{\gamma}} = \int_{\overline{\rG}_{\overline{\gamma}}(F_\inf)} \rd g_{\overline{\gamma},\inf} \int_{\mathcal{F}} \rd g_{\overline{\gamma},f}, \quad  g_{\overline{\gamma}}=(g_{\overline{\gamma},\inf},g_{\overline{\gamma},f})\in \overline{\rG}_{\overline{\gamma}}(\A),
\]
cf. \cite{Bou04}*{Ch. 7} (see also \cite{KL06}*{\S26.1}). 
We normalize $\int_{ \overline{\rG}_{\overline{\gamma}}(F_\inf)} \rd g_{\overline{\gamma},\inf}=1$. 
Then,
\begin{align}\label{eq:JPhi}
& \vol(\overline{\rG}_{\overline{\gamma}}(F)\bs \overline{\rG}_{\overline{\gamma}}(\A))\, J({\overline{\gamma}},\overline{h}) =\int_{\overline{\rG}_{\overline{\gamma}}(F)\bs \overline{\rG}_{\overline{\gamma}}(\A)}\rd g_{\overline{\gamma}}\, \int_{\overline{\rG}_{\overline{\gamma}}(\A)\bs \overline{\rG}(\A_f)} \overline{h}(g_f^{-1}{\overline{\gamma}} g_f)\,  \frac{\rd g_f}{\rd (g_{\overline{\gamma}})_f} \\
& = \int_{ \overline{\rG}_{\overline{\gamma}}(F_\inf)} \rd g_{\overline{\gamma},\inf}\int_{\overline{\rG}_{\overline{\gamma}}(F)\bs \overline{\rG}(\A_f)} \overline{h}(g_f^{-1}{\overline{\gamma}} g_f)\,  \rd g_f = \int_{\overline{\rG}_{\overline{\gamma}}(F)\bs \overline{\rG}(\A_f)} \overline{h}(g_f^{-1}{\overline{\gamma}} g_f)\, \rd g_f= \overline{\Phi}({\overline{\gamma}},\overline{h}) \nonumber 
\end{align}
where

\begin{align}
\overline{\Phi}({\overline{\gamma}},\overline{h})\coloneqq \int_{\overline{\rG}_{\overline{\gamma}}(F)\bs \overline{\rG}(\A_f)}\overline{h}(g^{-1}{\overline{\gamma}} g) \, \rd g.\label{OrbitalInt}
\end{align}

\subsection{Traces of Hecke operators}

Assume that Conditions (I) and (II) hold. 
If $\kappa_1$ and $\kappa_2$ are even, then we can get a formula of $\tr\, \mathbb{T}_n|_{S_{\underline{\kappa}}(\Gamma)}$ applying \cite{Art89}*{Theorem 6.1} to $\overline{\rG}$. 
We then extend the formula to arbitrary $\kappa_1$ and $\kappa_2$ by supplementing the cases in which it is identically zero.

We recall discrete series representations and their characters. 
Let $D_\kappa$ denote the discrete series of $\overline{\rG}(\R)$ whose Harish-Chandra parameter is $\kappa-1$, where $\kappa\in2\Z_{\ge 1}$. 
Consider the compact torus $T$ defined as
\[
T\coloneqq \left\{ \overline{\gamma_\theta} \; \middle| \; \theta\in\R \right\},\quad \gamma_\theta\coloneqq\begin{pmatrix}
    \cos(\theta) & -\sin(\theta) \\
    \sin(\theta) & \cos(\theta)
\end{pmatrix}.
\]
If $\sin(\theta)\neq 0$, then the character value of $D_\kappa$ for $\overline{\gamma_\theta}$ agrees with
\[
U_{\kappa-2}(\cos(\theta))=\frac{\sin((\kappa-1)\theta)}{\sin(\theta)}
\]
where $U_l$ is the $l$-th Chebyshev polynomial of the second kind. 

Write $m_\pi$ for the multiplicity of the discrete spectrum of the $L^2$-space of $\overline{\rG}(F)\bs \overline{\rG}(\A)/\overline{K_f}$. 
For $h\in C_c^\inf(K_f\bs \rG(\A_f)/K_f)$ and $\underline{\kappa}=(\kappa_1,\kappa_2)\in (2\Z_{\ge 1})^2$, we set
\[
\cL(\underline{\kappa},h)\coloneqq \sum_{\pi} m_\pi \, \tr(\pi_f(\overline{h})),
\]
where $\pi=\pi_\inf\otimes\pi_f$ runs over automorphic representations of $\overline{\rG}(\A)$ whose archimedean component $\pi_\inf$ is isomorphic to $D_{\kappa_1}\otimes D_{\kappa_2}$. 
We set
\[
\mathscr{R}(\underline{\kappa},h)\coloneqq \begin{cases}
    \int_{\overline{\rG}(\A_f)} \overline{h}(g) \, \rd g & \text{if $\kappa_1=\kappa_2=2$,} \\
    0 & \text{otherwise.}
\end{cases}
\]
\begin{lemma}\label{lem:31}
If $\kappa_1$ and $\kappa_2$ are even, for any $h\in C_c^\inf(K_f\bs \rG(\A_f)/K_f)$, we obtain
\begin{align*}
\cL(\underline{\kappa},h) + \mathscr{R}(\underline{\kappa},h)  =&\frac{1}{2} \zeta_F(-1)\, (\kappa_1-1)(\kappa_2-1) \, \overline{h}(1)  \\
& + \sum_{\{{\overline{\gamma}}\}_{\overline{\rG}(F)}}\vol(\overline{\rG}_{\overline{\gamma}}(F)\bs \overline{\rG}_{\overline{\gamma}}(\A))\,  J({\overline{\gamma}},\overline{h})\, U_{\kappa_1-2}( \cos(t_1))\, U_{\kappa_2-2}(\cos(t_2))  ,  \nonumber
\end{align*}
where $\{{\overline{\gamma}}\}_{\overline{\rG}(F)}$ runs over all regular $\R$-elliptic $\overline{\rG}(F)$-conjugacy classes and $t_j$ $(j=1,2)$ is defined so that $\iota_j({\overline{\gamma}})$ is $\overline{\rG}(\R)$-conjugate to $\overline{\gamma_{t_j}}$. 
\end{lemma}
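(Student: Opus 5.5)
The plan is to specialize Arthur's trace formula for $L^2$-Lefschetz numbers \cite{Art89}*{Theorem 6.1} to $\overline{\rG}=\mathrm{PGL}_2$ over $F$. The archimedean test function will be the pseudo-coefficient of $D_{\kappa_1}\otimes D_{\kappa_2}$ on $\overline{\rG}(F_\inf)=\PSL_2(\R)^2\rtimes(\text{signs})$, and the finite part will be $\overline{h}$. Arthur's formula expresses the Lefschetz number as a sum, over $\overline{\rG}(F)$-conjugacy classes of $\overline{\gamma}$ that are $\R$-elliptic in every component, of
\[
(-1)^{q}\,\vol(\overline{\rG}_{\overline{\gamma}}(F)\bs\overline{\rG}_{\overline{\gamma}}(\A))\;\Phi_M(\overline{\gamma},\Theta)\;J(\overline{\gamma},\overline{h}),
\]
together with terms from Levi subgroups $M$. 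Since $\overline{\rG}$ has split rank $1$ and the only other Levi is the torus $M_0$, I will argue that the $M_0$-terms are the hyperbolic/parabolic contributions.

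On the spectral side, Arthur's Lefschetz number sums $m_\pi\,\tr\pi_f(\overline{h})$ weighted by the Euler–Poincaré characteristic of $\pi_\inf$ against the finite-dimensional coefficient system. For the representation of highest weight $(\kappa_1-2,\kappa_2-2)$, the only contributing $\pi_\inf$ are products of discrete series $D_{\kappa_j}$ (each with multiplicity one up to the sign twist coming from the two components of $\mathrm{PGL}_2(\R)$) and, when $\kappa_1=\kappa_2=2$, the one-dimensional characters. With sign normalization $(-1)^{q(G)}$, the discrete-series part gives $\cL(\underline\kappa,h)$. When $\kappa_1=\kappa_2=2$, the trivial representation has $L^2$-cohomology in degrees $0,2,4$, so it appears with Euler characteristic $1+1+1$, while Arthur's normalization counts the middle degree. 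Keeping track of this via the diffeomorphism \eqref{eq:diff2} (under Condition (II) only the trivial character, not $\mathrm{sgn}\circ\det$, survives), the net contribution should be $-\int\overline h$, which I move to the left as $\mathscr R$. This sign bookkeeping is the step I expect to be the main obstacle.

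On the geometric side, $\overline{\gamma}=1$ contributes $\vol(\overline{\rG}(F)\bs\overline{\rG}(\A))\,d(D_{\kappa_1})d(D_{\kappa_2})\,\overline h(1)$. With the normalization $\int_{K_f}=1$ and the volume from \cite{Shi63} (equivalently, Harder's Gauss–Bonnet $\chi(\overline{\Gamma_1})=2\zeta_F(-1)$ via \eqref{eq:diff2}), together with formal degrees proportional to $\kappa_j-1$, this should give $\tfrac12\zeta_F(-1)(\kappa_1-1)(\kappa_2-1)\overline h(1)$. For regular $\overline\gamma$ elliptic at both places, the centralizer at infinity is compact with volume normalized to $1$, and $\Phi_M$ reduces to the character of $D_{\kappa_1}\otimes D_{\kappa_2}$ at $(\overline{\gamma_{t_1}},\overline{\gamma_{t_2}})$, which is $U_{\kappa_1-2}(\cos t_1)U_{\kappa_2-2}(\cos t_2)$ up to sign. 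Classes that are elliptic at one place and hyperbolic at the other do not appear, since Arthur's $\Phi_G$ vanishes off $\R$-elliptic elements.

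Finally, I must show that the $M_0$ (split torus) terms vanish for this $\overline h$. Arthur's $\Phi_{M_0}(\gamma,\Theta)$ is the stable discrete-series character on the split torus, which is nonzero. So the vanishing must instead come from the weighted orbital integrals combined over $\gamma\in M_0(F)$. Here I would use that, for $m=2$, $\Phi_{M_0}$ at infinity is a product over the two places, and I would sum the contributions $\gamma$ and $\gamma^{-1}$ together with the Weyl-group symmetry. As in the classical Hilbert case \cite{Shi65}, the unipotent and hyperbolic terms (including the $\sum_{\fa\mid(n)}$-type terms) should cancel for parallel even weight over $F\neq\Q$. If this cancellation proves delicate, I would cite the Shimizu/Ishikawa computation \cite{Ish74} directly for the parabolic and hyperbolic contributions.
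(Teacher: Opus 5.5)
Your overall framework is the paper's. You apply \cite{Art89}*{Theorem 6.1} to $\overline{\rG}$ with the pseudo-coefficient of $D_{\kappa_1}\otimes D_{\kappa_2}$ at infinity. You take the identity term from Shimizu's volume, the regular $\R$-elliptic terms from the discrete series characters, and add the trivial representation when $\kappa_1=\kappa_2=2$.

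The genuine gap is your treatment of the Levi subgroup $M_0$.
\begin{itemize}
\item You assert that $\Phi_{M_0}$ is a nonzero stable character on the split torus. You then say the $M_0$-contribution must disappear through a cancellation among $\gamma\in M_0(F)$, by pairing $\gamma$ with $\gamma^{-1}$ and using Weyl symmetry ``for parallel even weight'', with \cite{Shi65}, \cite{Ish74} as a fallback.
\item This is not an argument: no cancellation is exhibited, and the parallel-weight restriction would not even cover the lemma, which is stated for all even $\kappa_1,\kappa_2$.
\item A symmetry argument of this kind cannot work in general. Over $\Q$ the corresponding $M_0$-terms are genuinely nonzero and produce the boundary term $-\frac12\sum_{uu'=n}\min(u,u')^{k-1}$ in \eqref{eq:3}.
\item The fallback does not help. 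The parabolic and hyperbolic terms of Shimizu and Ishikawa belong to the Selberg trace formula for cusp forms on $\overline{\Gamma_1}\bs\fh^2$, not to Arthur's $L^2$-Lefschetz formula. They cannot be substituted termwise for Arthur's $M_0$-terms.
\end{itemize}

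The missing observation is that the $M_0$-terms vanish identically, independently of $h$ and of the weight.
\begin{itemize}
\item Arthur's theorem is for groups over $\Q$, so it must be applied to $\mathrm{Res}_{F/\Q}\overline{\rG}$.
\item Its proper Levi $M_0$, the restriction of scalars of the diagonal torus, has a one-dimensional $\Q$-split component $A_{M_0}$. On the other hand, $M_0(\R)\simeq(\R^\times)^2$.
\item Hence $M_0(\R)/A_{M_0}(\R)$ is noncompact, no element of $M_0(\R)$ is $\R$-elliptic in $M_0$, and $\Phi_{M_0}(\gamma,\mu)=0$ for every $\gamma$.
\item Equivalently, by descent, the archimedean distribution attached to $M_0$ only involves $\R$-Levi subgroups that are proper at one of the two real places. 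There the constant term of the pseudo-coefficient vanishes by cuspidality.
\end{itemize}
So for $[F:\Q]=2$ only $M=\overline{\rG}$ contributes, giving exactly the identity and regular $\R$-elliptic terms. This is why the paper can simply cite \cite{Art89}*{Theorem 6.1}.

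A smaller point: your cohomological bookkeeping for the trivial representation is off. Take the full (disconnected) maximal compact subgroup of $\overline{\rG}(F_\inf)=\mathrm{PGL}_2(\R)^2$. Then the trivial representation has $(\mathfrak g,K)$-cohomology only in degree $0$, not ``$1+1+1$''. The degree $2$ and $4$ classes belong to the sign-twisted characters, which you rightly discard under Condition (II). Equivalently, as the paper indicates via \cite{CD90}, the pseudo-coefficient of $D_2$ on $\mathrm{PGL}_2(\R)$ has trace $-1$ on the trivial representation, so the product has trace $+1$. The trivial representation therefore enters the spectral side directly as $+\mathscr R(\underline\kappa,h)$, with no sign juggling needed.
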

\begin{proof}
    See \cite{Art89}*{Theorem 6.1}.
    The factor $\zeta_F(-1)/2$ arises from the volume $\vol(\overline{G}(F)\bs \overline{G}(\A))$.
    For details, see \cite{Shi63}*{(53) on p.~70}.

    $\mathscr{R}(\underline{\kappa},h)$ is the trace of the action of $h$ for the trivial representation. The reason why such a term appears is that the pseudo-coefficient of $D_2$ takes a nonzero value on the trivial representation, cf. \cite{CD90}. 
    By using \eqref{eq:diff2} and the strong approximation theorem \cite{PR94}*{Theorem 7.12} for $\SL_2$, one can prove that an additional contribution to the spectral side, other than $\cL(\underline{\kappa},h)$, appears only in the case $\kappa_1=\kappa_2=2$.
\end{proof}

\begin{lemma}\label{lem:32} For any $n\in\fo_F$, $n\gg 0$, we have $\cL(\underline{\kappa},h_n)=\tr \bT_n|_{S_{\underline{\kappa}}(\Gamma)}$. 
\end{lemma}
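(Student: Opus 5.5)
To prove Lemma \ref{lem:32} I would identify the $\pi_f^{K_f}$-isotypic pieces on both sides and check that the Hecke actions agree. Concretely, I would show that the subspace of the discrete spectrum of $L^2(\overline{\rG}(F)\bs\overline{\rG}(\A))$ consisting of $\overline{K_f}$-fixed vectors in representations with $\pi_\infty\simeq D_{\kappa_1}\otimes D_{\kappa_2}$, restricted to lowest weight vectors at infinity, is exactly the image of $S_{\underline{\kappa}}(\Gamma)$ under $\phi\mapsto F_\phi$ from \eqref{eq:lift}. Then I would apply Lemma \ref{lem:hecke1}.

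The steps, in order, are as follows.

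First, since $h_n$ is bi-$K_f$-invariant, $\pi_f(\overline{h_n})$ acts through the projection onto $\pi_f^{\overline{K_f}}$. Hence $\tr\pi_f(\overline{h_n})=\tr(\pi_f(\overline{h_n})|_{\pi_f^{\overline{K_f}}})$. Here $\overline{h_n}(g)=\int_{Z(\A_f)}h_n(zg)\,dz$. Because $Z(\A_f)\cap T_{n,\A_f}K_f$ meets only $Z(\widehat{\fo}_F)$ (condition (I) and $\det$ considerations), and $\mathrm{vol}(Z(\widehat{\fo}_F))=1$ under the natural normalization, the operator $R(\overline{h_n})$ on $\overline{\rG}(F)\bs\overline{\rG}(\A)$ coincides with $R_\rG(h_n)$ on $Z_\infty K_f$-invariant functions. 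Each $D_{\kappa_j}$ has a one-dimensional space of lowest $K_\infty$-type vectors of weight $\kappa_j$. Thus $\cL(\underline{\kappa},h_n)$ equals the trace of $R_\rG(h_n)$ on the space $\mathcal{A}$ of $L^2$ discrete-spectrum functions on $\rG(F)\bs\rG(\A)/Z_\infty K_f$ that have right $\mathrm{SO}(2)^2$-type $(\kappa_1,\kappa_2)$ and are annihilated by the lowering operators, counted with multiplicity $m_\pi$.

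Second, I would show that $\phi\mapsto F_\phi$ is an isomorphism $S_{\underline{\kappa}}(\Gamma)\to\mathcal{A}$. Well-definedness of $F_\phi$ uses \eqref{eq:I2} and the diffeomorphisms \eqref{eq:diff1}, \eqref{eq:diff2}. The key point is that $\rG(F)\cap \rG(F_\infty)K_f$ meets $(\GL_2(\R)^+)^2$ in $\Gamma$, while the components not in $(\GL_2(\R)^+)^2$ are reached through $\delta_{\pm\varepsilon}$ by condition (II). Also $\kappa_j$ even makes $F_\phi$ trivial on $Z_\infty$, including $-I_2$ and $\varepsilon I_2$. Holomorphy of $\phi$ corresponds to annihilation by the lowering operators, and the weight condition corresponds to the $\mathrm{SO}(2)^2$-type. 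Boundedness of $\prod \mathrm{Im}(z_j)^{\kappa_j/2}|\phi|$ gives square integrability, and cuspidality holds automatically for holomorphic forms since $m\ge2$ (Koecher principle). For surjectivity, a lowest weight vector $F$ in $\mathcal{A}$ restricts to $(\GL_2(\R)^+)^2$ and gives $\phi(g\cdot\underline{i})=J_{\underline{\kappa}}(g,\underline{i})F(g)$. This $\phi$ is holomorphic and $\Gamma$-automorphic. It is cuspidal because the only non-cuspidal discrete spectrum on $\overline{\rG}$ consists of one-dimensional characters, whose archimedean components are not $D_{\kappa_1}\otimes D_{\kappa_2}$. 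This also shows the discrete spectrum in question is cuspidal, where multiplicity one holds.

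Third, Lemma \ref{lem:hecke1} gives $R_\rG(h_n)F_\phi=F_{\bT_n\phi}$. Hence the trace on $\mathcal{A}$ equals $\tr\bT_n|_{S_{\underline{\kappa}}(\Gamma)}$.

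I expect the main obstacle to be the bookkeeping in the second step. One must make sure that $F_\phi$, including its values on the non-identity components handled via $\delta_{\pm\varepsilon}$ with the convention $\phi=0$ off $\fh^2$, really is a weight vector generating a copy of $D_{\kappa_1}\otimes D_{\kappa_2}$. One must also confirm that no extra representations, for instance twists by sign characters of $\det$ that become trivial on $\overline{K_f}$, enter with the same archimedean type and fail to come from $S_{\underline{\kappa}}(\Gamma)$. I would first handle this by checking that such a twist $\pi\otimes\mathrm{sgn}$, if unramified everywhere, comes from a character of $F^\times\bs\A^\times/F_\infty^{\times,+}U_f$. This group is trivial by narrow class number one, so the twists add nothing.
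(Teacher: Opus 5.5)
Your proposal is correct and follows the paper's route. The paper's proof is the one-line appeal to \eqref{eq:diff1}, \eqref{eq:diff2}, Lemmas \ref{lem:vanishing} and \ref{lem:hecke1} and the bi-$K_f$-invariance of $h_n$, and your three steps unpack it: reduce to $\overline{K_f}$-fixed vectors of weight $(\kappa_1,\kappa_2)$ killed by the lowering operators, identify that space with $S_{\underline{\kappa}}(\Gamma)$ via $\phi\mapsto F_\phi$, then apply Lemma \ref{lem:hecke1}.

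Two slips in your justifications do not affect the argument:
\begin{itemize}
\item Cuspidality of $F_\phi$ comes from the boundedness of $\prod_j\mathrm{Im}(z_j)^{\kappa_j/2}|\phi|$ built into the definition of $S_{\underline{\kappa}}$, not from the Koecher principle, which only gives holomorphy at the cusps.
\item The identity $R(\overline{h_n})=R_\rG(h_n)$ on $Z(\A)$-invariant functions follows just from unfolding the $Z(\A_f)$-integral. So you do not need your claim about $Z(\A_f)\cap T_{n,\A_f}$, which is false in general: for instance $aI_2\in T_{a^2,\A_f}$.
\end{itemize}
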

\begin{proof}
This follows from \eqref{eq:diff1}, \eqref{eq:diff2}, Lemmas \ref{lem:vanishing} and \ref{lem:hecke1}, since $h_n$ is $K_f$-bi-invariant. 
\end{proof}

\begin{theorem}\label{thm:traceformula}
Suppose that {\rm Conditions (I) and (II)} hold. 
Let $\zeta_F(s)$ denote the Dedekind zeta function of $F$.
If $\kappa_1$ and $\kappa_2$ are even, for any element $n\in\fo_F$, $n\gg 0$, we obtain
\begin{align}\label{eq:trace}
\tr\, \mathbb{T}_n|_{S_{\underline{\kappa}}(\Gamma)} + \mathscr{R}(\underline{\kappa},h_n)  = &\frac{1}{2} \zeta_F(-1) \, (\kappa_1-1)(\kappa_2-1) \, \overline{h_n}(1)   \\
&  + \sum_{\{{\overline{\gamma}}\}_{\overline{\rG}(F)}}\overline{\Phi}({\overline{\gamma}},\overline{h_n})\, U_{\kappa_1-2}( \cos(t_1))\, U_{\kappa_2-2}(\cos(t_2))  ,  \nonumber
\end{align}
where $\{{\overline{\gamma}}\}_{\overline{\rG}(F)}$ runs over all regular $\R$-elliptic $\overline{\rG}(F)$-conjugacy classes and $t_j$ $(j=1,2)$ is defined so that $\iota_j({\overline{\gamma}})$ is $\overline{\rG}(\R)$-conjugate to $\overline{\gamma_{t_j}}$. 
\end{theorem}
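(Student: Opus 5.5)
The plan is to specialize Lemma \ref{lem:31} to $h=h_n$ and then rewrite both sides using the results already established. First, since $h_n\in C_c^\inf(K_f\bs \rG(\A_f)/K_f)$ and $\kappa_1,\kappa_2$ are even, Lemma \ref{lem:31} applies directly to $h_n$. This gives an identity whose left-hand side is $\cL(\underline{\kappa},h_n)+\mathscr{R}(\underline{\kappa},h_n)$. Its right-hand side consists of the identity contribution $\frac12\zeta_F(-1)(\kappa_1-1)(\kappa_2-1)\overline{h_n}(1)$ and the sum over regular $\R$-elliptic classes of $\vol(\overline{\rG}_{\overline{\gamma}}(F)\bs\overline{\rG}_{\overline{\gamma}}(\A))\,J(\overline{\gamma},\overline{h_n})$ times the two Chebyshev factors.

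Second, I rewrite the spectral side. Lemma \ref{lem:32} gives $\cL(\underline{\kappa},h_n)=\tr\,\bT_n|_{S_{\underline{\kappa}}(\Gamma)}$ for $n\gg0$. Behind this lie the diffeomorphisms \eqref{eq:diff1}, \eqref{eq:diff2}, which identify the $D_{\kappa_1}\otimes D_{\kappa_2}$-isotypic part of the discrete spectrum with $S_{\underline{\kappa}}(\Gamma)$ via the lift \eqref{eq:lift}. Lemma \ref{lem:hecke1} then matches $R_\rG(h_n)$ with $\bT_n$. With this, the left-hand side becomes exactly the left-hand side of \eqref{eq:trace}.

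Third, I rewrite the elliptic terms. Here I use \eqref{eq:JPhi}. For a regular $\R$-elliptic $\overline{\gamma}$ the group $\overline{\rG}_{\overline{\gamma}}(F_\inf)$ is compact, and I normalize its measure to total volume $1$. Unfolding over a fundamental domain $\mathcal{F}$ in $\overline{\rG}_{\overline{\gamma}}(\A_f)$ gives $\vol(\overline{\rG}_{\overline{\gamma}}(F)\bs \overline{\rG}_{\overline{\gamma}}(\A))\,J(\overline{\gamma},\overline{h_n})=\overline{\Phi}(\overline{\gamma},\overline{h_n})$. Substituting this term by term yields \eqref{eq:trace}.

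The main obstacle is making sure the measure normalizations agree throughout. The measure on $\overline{\rG}(\A_f)$ used in $\overline{\Phi}$ and in $\overline{h_n}(1)$ must be compatible with the one Arthur uses in Lemma \ref{lem:31}, which was fixed by $\int_{K_f}dg=1$. Likewise, the archimedean normalization $\int_{\overline{\rG}_{\overline{\gamma}}(F_\inf)}dg_{\overline{\gamma},\inf}=1$ must match how the discrete series character values $U_{\kappa_j-2}(\cos t_j)$ enter Arthur's formula, with the pseudo-coefficient normalized to have trace $1$ on $D_{\kappa_1}\otimes D_{\kappa_2}$. I would check this by comparing the identity term with Shimizu's volume $\frac12\zeta_F(-1)$. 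I would also use that the sum is absolutely convergent, being finite because $\overline{h_n}$ has compact support and $\overline{\gamma}$ is integral with bounded archimedean image. Finally, the terms are well defined: each $t_j$ is determined up to sign, and $U_{\kappa_j-2}(\cos t_j)$ is even in $t_j$, so the choice does not matter.
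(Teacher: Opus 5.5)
Your proposal is correct and follows the paper's proof: specialize Lemma \ref{lem:31} to $h=h_n$, replace $\cL(\underline{\kappa},h_n)$ by $\tr\,\bT_n|_{S_{\underline{\kappa}}(\Gamma)}$ via Lemma \ref{lem:32}, and rewrite each $\vol(\overline{\rG}_{\overline{\gamma}}(F)\bs\overline{\rG}_{\overline{\gamma}}(\A))\,J(\overline{\gamma},\overline{h_n})$ as $\overline{\Phi}(\overline{\gamma},\overline{h_n})$ via \eqref{eq:JPhi}. One refinement to your well-definedness remark: since $\overline{\gamma_{t}}=\overline{\gamma_{t+\pi}}$ in $\overline{\rG}(\R)$, each $t_j$ is fixed only up to sign and modulo $\pi$. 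The shift by $\pi$ is harmless because $U_{\kappa_j-2}$ is an even polynomial when $\kappa_j$ is even.
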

\begin{proof}
This follows from Lemmas \ref{lem:31} and \ref{lem:32} and \eqref{eq:JPhi}. 
\end{proof}

\subsection{Plancherel density theorem}

Assume that Conditions (I) and (II) hold. 
For each weight $\underline{\kappa}$, set 
$d_{\underline{\kappa}} \coloneqq \dim S_{\underline{\kappa}}(\Gamma)$, 
and let $\{\phi_{\underline{\kappa},j}\}_{1\le j\le d_{\underline{\kappa}}}$ 
be an orthonormal basis of $S_{\underline{\kappa}}(\Gamma)$ consisting of Hecke eigenforms. 
Recall that $\lambda_{\phi}(n)$ is the eigenvalue of a Hecke eigenform $\phi$ with respect to $\mathbb{T}_n'=N_{F/\Q}(n)^{-1/2}\mathbb{T}_n$. 
Then,
\begin{equation*}
\tr\, \mathbb{T}_n|_{S_{\underline{\kappa}}(\Gamma)}=N_{F/\Q}(n)^{1/2}\sum_{j=1}^{d_{\underline{\kappa}}}  \lambda_{\phi_{\underline{\kappa},j}}(n), \qquad n\in\fo_F,\quad n\gg 0.
\end{equation*}

Fix a prime ideal $(p)$ of $\fo_F$ with $p \gg 0$. 
Let $\delta_x$ denote the Dirac measure at $x$ on the interval $\Omega \coloneqq [-2,2]$, and $C^0(\Omega)$ the space of $\R$-valued continuous functions on $\Omega$. 
By Theorem \ref{thm:GRC}, we have $\lambda_{\phi_{\underline{\kappa},j}}(p)\in\Omega$, and then we define a measure $\mathscr{T}(p,\underline{\kappa})$ on $\Omega$ by
\[
\mathscr{T}(p,\underline{\kappa},f)\coloneqq \frac{1}{d_{\underline{\kappa}}}\sum_{j=1}^{d_{\underline{\kappa}}} \delta_{x(p,\underline{\kappa},j)}(f),\quad x(p,\underline{\kappa},j)\coloneqq \lambda_{\phi_{\underline{\kappa},j}}(p), \quad f\in C^0(\Omega).
\]
Note that $\delta_{x}(f)=f(x)$. 
The following theorem is analogous to \cite{Ser97}*{Th\'eor\`eme~1}.
\begin{theorem}\label{thm:PDT}
    Take an arbitrary sequence of weights $\{\underline{\ell}(r)\}_{r\in\N}$ such that $\underline{\ell}(r)=(\ell_{1,r},\ell_{2,r})$ and $\ell_{1,r}+\ell_{2,r}\to\infty$ as $r\to\infty$. 
    Then, for any $f\in C^0(\Omega)$, we have
    \[
    \lim_{r\to \inf} \mathscr{T}(p,\underline{\ell}(r),f)=\int_\Omega f(x)\rd \mu_p(x), \qquad \mu_p(x)\coloneqq \frac{q+1}{\pi} \frac{(1-x^2/4)^{\frac12} \rd x}{(q^{\frac{1}{2}}+q^{-\frac{1}{2}})^2-x^2} .
    \]
\end{theorem}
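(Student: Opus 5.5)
We prove Theorem \ref{thm:PDT} by the moment method, in the style of \cite{Ser97}: by Weierstrass approximation on $\Omega$ it suffices to establish the convergence for polynomials $f$. A convenient basis is the family $X_\nu(x)\coloneqq U_\nu(x/2)$, $\nu\ge0$. By the Hecke relations at the prime $(p)$ (which for normalized eigenvalues read $\lambda_\phi(p)\lambda_\phi(p^\nu)=\lambda_\phi(p^{\nu+1})+\lambda_\phi(p^{\nu-1})$, since the central character is trivial and $(p)$ has trivial narrow class), we have $X_\nu(\lambda_\phi(p))=\lambda_\phi(p^\nu)$. Hence
\[
\mathscr{T}(p,\underline{\ell}(r),X_\nu)=\frac{\tr\, \mathbb{T}_{p^\nu}|_{S_{\underline{\ell}(r)}(\Gamma)}}{q^{\nu/2}\, d_{\underline{\ell}(r)}},
\]
and the problem reduces to computing the limit of this normalized trace, which we take from Theorem \ref{thm:traceformula}.

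First, we determine the main term. Apply Theorem \ref{thm:traceformula} to $n=p^\nu$ and to $n=1$ (the latter giving $d_{\underline{\kappa}}$). The identity term is $\frac12\zeta_F(-1)(\kappa_1-1)(\kappa_2-1)\overline{h_{p^\nu}}(1)$. We have $\overline{h_{p^\nu}}(1)=1$ if $\nu$ is even, since then $p^{\nu/2}I_2$ lies in the support and its $Z(\A_f)$-orbit meets $T_{p^\nu,\A_f}$ in exactly a $K_f$-coset of measure $1$; and $\overline{h_{p^\nu}}(1)=0$ if $\nu$ is odd, by a determinant-parity argument. Consequently
\[
\mathscr{T}(p,\underline{\ell}(r),X_\nu)\to q^{-\nu/2}\,[\nu \text{ even}],
\]
provided the remaining terms are $o\bigl((\ell_{1,r}-1)(\ell_{2,r}-1)\bigr)$.

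Second, we bound the elliptic terms. For fixed $\nu$, the sum over $\{\overline\gamma\}$ with $\overline{\Phi}(\overline\gamma,\overline{h_{p^\nu}})\ne0$ is finite: these classes correspond to $t\in\fo_F$ with $4p^\nu-t^2\gg0$ (compare Lemma \ref{lem:KL3}). Moreover, $|U_{\kappa-2}(\cos\theta)|\le 1/|\sin\theta|$ is bounded independently of $\kappa$ for each fixed regular $\theta$. Thus the elliptic contribution is $O_{p,\nu}(1)$, and $\mathscr{R}$ only appears at weight $(2,2)$. Since $\ell_{1,r}+\ell_{2,r}\to\infty$ and both weights are at least $2$, we have $(\ell_{1,r}-1)(\ell_{2,r}-1)\to\infty$. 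So the ratio converges to $q^{-\nu/2}$ for even $\nu$ and to $0$ for odd $\nu$, and likewise $d_{\underline{\ell}(r)}\sim\frac12\zeta_F(-1)(\ell_{1,r}-1)(\ell_{2,r}-1)$.

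Third, we check that $\mu_p$ has exactly these moments: $\int X_\nu\,d\mu_p=q^{-\nu/2}$ for even $\nu$ and $0$ for odd $\nu$. This is Serre's computation. Substitute $x=2\cos\theta$ and expand
\[
\frac{q+1}{(q^{1/2}+q^{-1/2})^2-x^2}
\]
as a geometric series in $q^{-1}e^{\pm2i\theta}$. This yields the density $\sum_{\mu\ge0}q^{-\mu}X_{2\mu}$ against the Sato--Tate measure, and orthogonality of the $X_\nu$ with respect to that measure gives the claimed moments. Linearity and density of polynomials in $C^0(\Omega)$, together with the uniform bound $|\mathscr{T}(p,\underline{\ell}(r),f)|\le\|f\|_\infty$, then conclude the proof. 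The main obstacle is the computation of $\overline{h_{p^\nu}}(1)$ and the matching normalization against $d_{\underline{\kappa}}$; I would verify it by explicitly decomposing $T_{p^\nu,\A_f}$ into right $K_f$-cosets at the place $p$.
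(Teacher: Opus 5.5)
Your proposal is correct and follows the same route as the paper. The paper's proof consists only of citing Theorem \ref{thm:traceformula} together with Serre's moment argument in \cite{Ser97}, and you have supplied exactly those details: the Hecke relation giving $X_\nu(\lambda_\phi(p))=\lambda_\phi(p^\nu)$, dominance of the identity term with $\overline{h_{p^\nu}}(1)=[\nu\text{ even}]$, boundedness of the finitely many elliptic terms uniformly in the weight, and Serre's computation of the moments of $\mu_p$.

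One small wording fix: $Z(\A_f)\cap T_{p^\nu,\A_f}=p^{\nu/2}Z(\widehat{\fo_F})$ is a coset of $Z(\widehat{\fo_F})$, not of $K_f$. In any case only the ratio $\overline{h_{p^\nu}}(1)/\overline{h_1}(1)$ enters your limit, so the normalization of the measure on $Z(\A_f)$ is irrelevant.
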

\begin{proof}
    This is a consequence of Theorem \ref{thm:traceformula} and the argument in \cite{Ser97}. For this reason, we omit the proof.
\end{proof}

\section{Elliptic terms}\label{sec:Elliptic}

Assume that the condition (I) holds. Let $\overline{\gamma}\in \overline{\rG}(F)$ be an $\R$-elliptic element. Fix $\gamma\in\rG(F)$ as its representative such that
\begin{align}
    t_0:=\mathrm{tr}\ \gamma\in\fo_F,\quad n_0:=\det\gamma\in\fo_F.\label{gamma}
\end{align}
Then the minimal polynomial of $\gamma$ over $F$ is equal to $X^2-t_0X+n_0$.

The aim of this section is to calculate the orbital integral $\overline{\Phi}(\overline{\gamma},\overline{h})$ given in \eqref{OrbitalInt}
where $h=h_{n}$ is defined in \S\ref{sec:Hecke operators}. 
We shall imitate the argument developed in \cite{KL06}*{\S 26}. 
To this end, we first compute the value of the orbital integral considered in \cite{KL06},
\[
\Phi(\gamma,\overline{h})\coloneqq \int_{\overline{\rG_\gamma(F)}\bs \overline{\rG}(\A_f)} \overline{h}(g^{-1}\gamma g)\, \rd g,
\]
and then derive $\overline{\Phi}(\overline{\gamma},\overline{h})$ by examining the difference between $\overline{\rG_\gamma(F)}$ and $\overline{\rG}_{\overline{\gamma}}(F)$.
Then we can view $\rG_{\gamma}(F)$ as the multiplicative group of the number field $E:=F(\sqrt{t_0^2-4n_0})$. In what follows, we identify $E$ with $\rG_{\gamma}(F)\cup\{0\}$ as sets. Since $\fo_F$ is a principal ideal domain due to the assumption (I), the ring of integers $\fo_E$ in $E$ is a free $\fo_F$-module of rank $2$. Hence, there exists an element $\epsilon\in E$ such that $\fo_E=\fo_F\oplus\epsilon\fo_F$.
\begin{lemma}\label{multiplicationLem}
After replacing $\gamma$ by a possible $\rG(F)$-conjugate, it holds that
\begin{align}
\epsilon[1\ \epsilon]=[1\ \epsilon]\epsilon\label{product in E}
\end{align}
as row vectors. Here, the left-hand side is the multiplication in $E$ (i.e. this equals $[\epsilon\ \epsilon^2]$), the right-hand side is the multiplication of the row vector $[1\ \epsilon]$ and the matrix $\epsilon$.
\end{lemma}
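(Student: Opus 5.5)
We have $E=F(\gamma)$ with $\fo_E=\fo_F\oplus\epsilon\fo_F$, and we identify $E^\times$ with $\rG_\gamma(F)$, i.e. with the set of matrices commuting with $\gamma$. The statement says that, after conjugating $\gamma$, the matrix in $\rG(F)$ attached to $\epsilon$ is exactly the matrix of multiplication by $\epsilon$ in the basis $\{1,\epsilon\}$, acting on row vectors from the right.

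The plan is to realize $E$ as $F^2$ via a basis and use the regular representation. Since $\gamma$ is regular, the $F$-algebra $F[\gamma]\subset \M_2(F)$ is a field isomorphic to $E$, and $F^2$ (row vectors, with $\M_2(F)$ acting on the right) is a one-dimensional $E$-vector space. So there is an $F$-linear isomorphism $\phi\colon E\to F^2$ (row vectors) satisfying
\[
\phi(xy)=\phi(x)\,\rho(y), \qquad x,y\in E,
\]
where $\rho\colon E\to F[\gamma]$ is the identification. Concretely, pick any nonzero row vector $v\in F^2$ and set $\phi(x)=v\,\rho(x)$. This is injective, since $\rho(x)$ is invertible for $x\neq0$, and hence bijective for dimension reasons.

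Now let $P\in\rG(F)$ be the matrix whose rows are $\phi(1)$ and $\phi(\epsilon)$; it is invertible because $1,\epsilon$ is an $F$-basis of $E$. The matrix $P\rho(\epsilon)P^{-1}$ has rows determined by the rows $\phi(1)\rho(\epsilon)=\phi(\epsilon)$ and $\phi(\epsilon)\rho(\epsilon)=\phi(\epsilon^2)$. Writing $\epsilon^2=a+b\epsilon$ with $a,b\in\fo_F$, we get
\[
P\rho(\epsilon)P^{-1}=\begin{pmatrix}0&1\\a&b\end{pmatrix},
\]
which is precisely the matrix $M_\epsilon$ with $[1\ \epsilon]M_\epsilon=[\epsilon\ \epsilon^2]$. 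Replacing $\gamma$ by $P\gamma P^{-1}$, which is a $\rG(F)$-conjugate, replaces the identification $\rho$ by $P\rho P^{-1}$. This gives \eqref{product in E}.

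Two points must be checked. First, the conjugated $\gamma$ still satisfies \eqref{gamma}. This is automatic, because trace and determinant are conjugation invariant. Second, the conjugation must be compatible with the identification of $E$ with $\rG_\gamma(F)\cup\{0\}$, which is simply transported by the conjugation. More generally, for every $x\in\fo_E$ the matrix $\rho(x)$ after conjugation is the multiplication matrix in the basis $\{1,\epsilon\}$, so it has entries in $\fo_F$. This integrality is presumably what the later orbital-integral computation will use.

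The only real subtlety is the convention: whether matrices act on row vectors from the right, and how the product $[1\ \epsilon]\epsilon$ is read. I will fix the right action on row vectors throughout, to match the statement. The rest is linear algebra; the principal-ideal hypothesis (I) enters only through the existence of the basis $\{1,\epsilon\}$.
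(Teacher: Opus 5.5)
Your strategy is the paper's: conjugate so that $\epsilon$ becomes a companion matrix of its minimal polynomial, and your cyclic-vector construction makes the paper's ``direct computation'' explicit. But the last identification is wrong: you end up with the transpose of the matrix the lemma requires. With $\epsilon^2=a+b\epsilon$ you have
\[
[1\ \epsilon]\begin{pmatrix}0&1\\ a&b\end{pmatrix}=[a\epsilon,\ 1+b\epsilon]\neq[\epsilon,\ \epsilon^2].
\]
Your matrix only satisfies the column identity $\left[\begin{smallmatrix}0&1\\ a&b\end{smallmatrix}\right]\left[\begin{smallmatrix}1\\ \epsilon\end{smallmatrix}\right]=\epsilon\left[\begin{smallmatrix}1\\ \epsilon\end{smallmatrix}\right]$. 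The right regular action on row vectors gives the matrix of multiplication by $\epsilon$ on \emph{row} coordinate vectors. The identity $[1\ \epsilon]M=\epsilon[1\ \epsilon]$ instead says $M$ is the matrix of multiplication by $\epsilon$ on \emph{column} coordinate vectors, $\left[\begin{smallmatrix}x\\ y\end{smallmatrix}\right]\leftrightarrow x+y\epsilon$. The required matrix is therefore $\left[\begin{smallmatrix}0&a\\ 1&b\end{smallmatrix}\right]$, which is the paper's $\left[\begin{smallmatrix}0&b\\ 1&a\end{smallmatrix}\right]$ in its normalization $\epsilon^2=a\epsilon+b$.

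This is not cosmetic. The lemma exists to make the correspondence \eqref{Corr.latices} compatible with the left action of $\rG(F)$ on $\fL$, so that $\gamma L$ corresponds to $\gamma\cL$; Lemmas \ref{Support} and \ref{KL Lem26.25} rely on this. Write $M_x$ for the matrix with $[1\ \epsilon]M_x=x[1\ \epsilon]$. Your normalization gives $\rho(x)=M_x^{T}$, and then $\left[\begin{smallmatrix}u\\ v\end{smallmatrix}\right]\mapsto u+v\epsilon$ is not $E$-equivariant, since that would need $M_\gamma^{T}=M_\gamma$. So the break is exactly at the step where you ``fix the right action on row vectors, to match the statement''. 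Likewise, your claim that $\rho(x)$ becomes ``the multiplication matrix in the basis $\{1,\epsilon\}$'' holds only up to transpose; integrality is unaffected.

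The repair is short. Use column vectors and the left action. Pick a nonzero column vector $w\in F^2$ and let $P$ have columns $w$ and $\rho(\epsilon)w$. These are independent because $\rho(\epsilon)$ has irreducible characteristic polynomial over $F$. Then $\rho(\epsilon)P=P\left[\begin{smallmatrix}0&a\\ 1&b\end{smallmatrix}\right]$. So after replacing $\gamma$ by $P^{-1}\gamma P$ you get $\rho(\epsilon)=\left[\begin{smallmatrix}0&a\\ 1&b\end{smallmatrix}\right]$ and $[1\ \epsilon]\rho(\epsilon)=[\epsilon,\ a+b\epsilon]=[\epsilon\ \epsilon^2]$, which is \eqref{product in E}.

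Alternatively, keep your construction and conjugate once more by $S=\left[\begin{smallmatrix}-b&1\\ 1&0\end{smallmatrix}\right]$, which satisfies $S\left[\begin{smallmatrix}0&1\\ a&b\end{smallmatrix}\right]S^{-1}=\left[\begin{smallmatrix}0&a\\ 1&b\end{smallmatrix}\right]$.

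The rest of your proposal is fine: invariance of $t_0,n_0$ under conjugation, transport of the identification, and the role of (I).
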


\begin{proof}
By considering a certain $\rG(F)$-conjugate of $\epsilon$, we may assume  
$\epsilon=\left[\begin{smallmatrix} {0} & {b} \\ {1} & {a} \end{smallmatrix}\right]$
for some $a,b\in F$. This is possible by a direct computation. Then we have
$$
[\epsilon, \epsilon^2]=[\epsilon,a\epsilon+b]=[1\ \epsilon]\left[\begin{smallmatrix} {0} & {b} \\ {1} & {a} \end{smallmatrix}\right].
$$
Hence the statement holds.
\end{proof}
Since the integral $\Phi(\gamma,h)$  is invariant under replacing $\gamma$ by a $\rG(F)$-conjugate, this operation causes no issue. In the following, we always assume that \eqref{product in E} holds.

Let $\fL$ denote the set of all free $\fo_F$-modules $L\subset F^2$ of rank 2. We view elements in $L$ as column vectors. Then the group $\rG(F)$ acts on the set $\fL$ from the left via the standard matrix multiplication. Similarly, for finite place $v$, let $\fL_v$ be the set of all $\fo_{F_v}$-modules of rank $2$. Set
$$
\fL_{\A_f}:=
\left\{
(L_v)_{v<\infty}\in\prod_{v<\infty}\fL_v
\mid
L_v=\left[\begin{smallmatrix} {\fo_{F_v}} \\ {\fo_{F_v}} \end{smallmatrix}\right]\text{ for almost all }v
\right\}.
$$

The next proposition is the local-global principle of lattices (cf. \cite{KL06}*{Theorem 26.15}).
\begin{proposition}\label{LGPlattice}
The correspondence
$$
\Psi:
\fL_{\A_f}\owns (L_v)_{v<\infty}
\mapsto
\cap_{v<\infty}(L_v\cap\left[\begin{smallmatrix} {F} \\ {F} \end{smallmatrix}\right])\in \fL
$$
is well-defined and bijective.
\end{proposition}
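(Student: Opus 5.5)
The plan is to prove the proposition by reducing it to the structure theory of finitely generated torsion-free modules over the Dedekind domain $\fo_F$, together with the fact that a lattice is determined by its localizations. No class number assumption is needed for bijectivity; we only use that, under Condition (I), every rank-2 lattice is free. Throughout, for $L\in\fL$ and a finite place $v$ we write $L_v\coloneqq \fo_{F_v}\otimes_{\fo_F}L$, viewed inside $F_v^2$.

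\emph{Well-definedness.} Let $(L_v)\in\fL_{\A_f}$ and put $L\coloneqq\bigcap_v(L_v\cap F^2)$.
\begin{itemize}
\item Choose a nonzero $a\in\fo_F$ such that $a\,\fo_{F_v}^2\subset L_v\subset a^{-1}\fo_{F_v}^2$ for every $v$. This is possible since only finitely many $L_v$ differ from $\fo_{F_v}^2$, and each $L_v$ is a compact open lattice.
\item Intersecting over all $v$ and using $\bigcap_v(\fo_{F_v}^2\cap F^2)=\fo_F^2$, we obtain $a\fo_F^2\subset L\subset a^{-1}\fo_F^2$.
\item Hence $L$ is an $\fo_F$-submodule of $F^2$ sandwiched between two free modules of rank 2. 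It is therefore finitely generated (as $\fo_F$ is Noetherian), torsion-free and of rank $2$, so it is projective of rank $2$.
\item By Steinitz's theorem, $L\simeq\fo_F\oplus\fa$ for some ideal $\fa$. Under Condition (I), $\fa$ is principal, so $L$ is free of rank $2$, i.e.\ $L\in\fL$.
\end{itemize}

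\emph{Key local claim.} We next show that $\fo_{F_v}L=L_v$ (the closure of $L$ in $F_v^2$) for every $v$. The inclusion $\subset$ is clear.

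For the reverse inclusion, fix $v$ and $x\in L_v$. By weak approximation (density of $F$ in $\prod_{w\in S}F_w$ for the finite set $S$ of places where $L_w\ne\fo_{F_w}^2$ or $w=v$), we choose $y\in F^2$ such that
\begin{itemize}
\item $y-x\in \varpi_v^N\fo_{F_v}^2\subset L_v$,
\item $y\in L_w$ for the other $w\in S$, using that $L_w$ is open,
\item $y\in\fo_{F_w}^2$ for $w\notin S$. To arrange this, we use strong approximation for the additive group $\A_f$ (density of $F$ in $\A_f$ after allowing one auxiliary place, or simply use $F+\prod_w\fo_{F_w}=\A_f$ up to the finite set), which is classical.
\end{itemize}
Then $y\in L$ and $y\equiv x$ modulo $L_v$ to arbitrary precision, so $x\in \fo_{F_v}L$ since $\fo_{F_v}L$ is open and closed.

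\emph{Injectivity and surjectivity.} Injectivity follows immediately: $(L_v)$ is recovered from $\Psi((L_v))$ by localization. For surjectivity, given $L\in\fL$, the family $(\fo_{F_v}L)_v$ lies in $\fL_{\A_f}$, because $L=g\fo_F^2$ for some $g\in\rG(F)$ and $g\in K_v$ for almost all $v$. It then remains to check that $\bigcap_v(\fo_{F_v}L\cap F^2)=L$, i.e.\ that a vector of $F^2$ locally in $L$ everywhere lies in $L$. Applying $g^{-1}$ reduces this to the case $L=\fo_F^2$, where it is the identity $\bigcap_v\fo_{F_v}=\fo_F$ for a Dedekind domain.

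The main obstacle is the approximation step in the local claim. An alternative that avoids adelic approximation is to note that the map $L\to L_v/\varpi_v^NL_v$ is surjective, because $L/(L\cap\varpi_v^NL_v)$ is a finite module whose $v$-adic completion computes $L_v$. This can be checked by comparing indices $[a^{-1}\fo_F^2:L]$ with local indices via the Chinese remainder theorem, and I would fall back on this index-counting argument if the approximation argument becomes messy.
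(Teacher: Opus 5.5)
Your proof is correct and takes essentially the same approach as the paper: the sandwich $a\fo_F^2\subset L\subset a^{-1}\fo_F^2$ for well-definedness, reduction via a basis (your matrix $g$) to $\bigcap_v(\fo_{F_v}\cap F)=\fo_F$ for surjectivity, and additive strong approximation (density of $F$ in $\A_f$) to show $\fo_{F_v}\Psi((L_v)_v)=L_v$ for injectivity. Your explicit choice of $y$ simply unpacks the paper's remark that $F^2\cap\prod_v L_v$ is dense in the open set $\prod_v L_v$; the index-counting fallback is not needed, and as phrased it presupposes the completion statement it is meant to establish.
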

\begin{proof}
Set $L=\Psi((L_v)_{v<\infty})$. First, we shall prove the well-definedness of the map, i.e. $L$ is in $\fL$. For a finite place $v$, we fix a prime element $\varpi_v\in\fo_F$ corresponding to $v$. Then, for each $v$ satisfying $L_v\ne\left[\begin{smallmatrix} {\fo_{F_v}} \\ {\fo_{F_v}} \end{smallmatrix}\right]$, there exists an integer $m_v\in\Z_{>0}$ such that  
$
\varpi^{m_v}\left[\begin{smallmatrix} {\fo_{F_v}} \\ {\fo_{F_v}} \end{smallmatrix}\right]
\subset L_v
\subset \varpi^{-m_v}\left[\begin{smallmatrix} {\fo_{F_v}} \\ {\fo_{F_v}} \end{smallmatrix}\right]
$.
We put $m_v=0$ when $L_p=\left[\begin{smallmatrix} {\fo_{F_v}} \\ {\fo_{F_v}} \end{smallmatrix}\right]$ and  $x=\prod_{v<\infty}\varpi^{m_v}$. Then  we have
$
x\left[\begin{smallmatrix} {\fo_{F}} \\ {\fo_{F}} \end{smallmatrix}\right]
\subset L
\subset x^{-1}\left[\begin{smallmatrix} {\fo_{F}} \\ {\fo_{F}} \end{smallmatrix}\right]
$. This proves $L\in\fL$. 

Second, we check the surjectivity. For each $L\in\fL$, fix an $\fo_F$-basis 
$\{\mathbf{x},\mathbf{y}\}$ of $L$ and set $L_v:=L\otimes_{\fo_F}\fo_{F_v}=\mathbf{x}\fo_{F_v}\oplus\mathbf{y}\fo_{F_v}$. Since $\mathbf{x}\fo_{F_v}\oplus\mathbf{y}\fo_{F_v}=\fo_{F_v}^2$ for almost all finite places $v$, the family $(L_v)_{v<\infty}$ is in $\fL_{\A_f}$. Then our claim is $\Psi((L_v)_{v<\infty})=L$. The inclusion $\supset$ is obvious. On the other hand, let $a\mathbf{x}+b\mathbf{y}\in \Psi((L_v)_{v<\infty})$ ($a,b\in F$) be an arbitrary element. Because  $\{\mathbf{x},\mathbf{y}\}$ is also an $F_v$-basis of $F_v^2$, $a\mathbf{x}+b\mathbf{y}\in\cap_{v<\infty}L_v$ implies that $x,y\in F\cap(\cap_{v<\infty}\fo_{F_v})=\fo_F$. Hence the converse inclusion $\subset$ holds.

Finally, it remains to prove the injectivity. For $(L_v)_{v<\infty}\in\fL_{\A_f}$, we set $M=\Psi((L_v)_{v<\infty})$. It suffices to show that $M\otimes_{\fo_F}\fo_{F_v}=L_v$ for all finite places $v$. Let $\mathbf{L}=\prod_{v<\infty}L_v\subset\A_f^2$, then, by regarding $M$ as a subset of $\A_f^2$ via diagonal embedding, it is clear that $M=\mathbf{L}\cap F^2$. By the strong approximation \eqref{eq:I1}, $F^2$ is dense in $\A_f^2$. Because $\mathbf{L}$ is an open subset of $\A_f^2$, it follows that $M$ is dense in $\mathbf{L}$, and hence $M$ is dense in $L_v$ for all finite places $v$. Since $M\otimes_{\fo_F}\fo_{F_v}$ is a closed subset of $L_v$ containing $M$, it must coincide with $L_v$. This completes the proof.

\end{proof}
It is easy to check that there exists an isomorphism
$$
\lambda:\rG(\A_f)/K_f\to\fL_{\A_f}
$$
defined by 
$
\lambda((g_v)_{v<\infty})
=\left(g_v\left[\begin{smallmatrix} {\fo_{F_v}} \\ {\fo_{F_v}} \end{smallmatrix}\right]\right)_{v<\infty}
$.
By combining this and Proposition \ref{LGPlattice}, we obtain the following corollary.
\begin{corollary}\label{Cor.1}
The map
$$
\Psi\circ\lambda:\rG(\A_f)/K_f\to\fL
$$
is a bijection. 
\end{corollary}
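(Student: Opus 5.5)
The plan is to show that $\lambda$ is a well-defined bijection and then compose it with the bijection $\Psi$ of Proposition \ref{LGPlattice}. Since a composition of bijections is a bijection, everything reduces to a local statement at each finite place $v$: the map $\GL_2(F_v)/K_v\to\fL_v$, $g_v\mapsto g_v\fo_{F_v}^2$, is a bijection. After that we only need to check that the restricted-product conditions on the two sides correspond.

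\emph{Well-definedness.} If $k_v\in K_v=\GL_2(\fo_{F_v})$, then $k_v\fo_{F_v}^2=\fo_{F_v}^2$, because $k_v$ and $k_v^{-1}$ both have entries in $\fo_{F_v}$. Hence $g_vk_v\fo_{F_v}^2=g_v\fo_{F_v}^2$. The module $g_v\fo_{F_v}^2$ is free of rank $2$, spanned by the columns of $g_v$. For $g=(g_v)\in\rG(\A_f)$ we have $g_v\in K_v$ for almost all $v$, so $g_v\fo_{F_v}^2=\fo_{F_v}^2$ for almost all $v$. Thus $\lambda(g)\in\fL_{\A_f}$.

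\emph{Local bijectivity.} For surjectivity, take $L_v\in\fL_v$. Here I read $\fL_v$, as intended, as the set of free rank-$2$ $\fo_{F_v}$-submodules of $F_v^2$ that span $F_v^2$. Choose an $\fo_{F_v}$-basis $\mathbf{x},\mathbf{y}$ of $L_v$ and let $g_v=[\mathbf{x}\ \mathbf{y}]$. Since the basis spans $F_v^2$, $g_v\in\GL_2(F_v)$, and $g_v\fo_{F_v}^2=L_v$. For injectivity, suppose $g_v\fo_{F_v}^2=g'_v\fo_{F_v}^2$. Then $k=g_v^{-1}g'_v$ satisfies $k\fo_{F_v}^2=\fo_{F_v}^2$. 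Applying this to the standard basis vectors shows that $k$ and $k^{-1}$ have entries in $\fo_{F_v}$, so $k\in K_v$ and $g_vK_v=g'_vK_v$.

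\emph{Globalization.} Injectivity of $\lambda$ then holds componentwise. For surjectivity, take $(L_v)\in\fL_{\A_f}$. Choose $g_v$ with $g_v\fo_{F_v}^2=L_v$, and take $g_v=1$ whenever $L_v=\fo_{F_v}^2$, which happens for almost all $v$. Then $(g_v)$ lies in the restricted product $\rG(\A_f)$ and maps to $(L_v)$. The only point needing care is exactly this choice of $g_v=1$ at almost all places, so that $(g_v)$ really is an adelic element. Composing with Proposition \ref{LGPlattice} gives the corollary.
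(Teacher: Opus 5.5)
Your proof is correct and follows the paper's own route. The paper simply asserts that $\lambda\colon \rG(\A_f)/K_f\to\fL_{\A_f}$ is a bijection (``easy to check'') and composes it with the bijection $\Psi$ of Proposition \ref{LGPlattice}; your local verification that $g_v\mapsto g_v\fo_{F_v}^2$ identifies $\GL_2(F_v)/K_v$ with $\fL_v$, together with the choice $g_v=1$ at almost all places to land in the restricted product, supplies exactly the details the paper leaves to the reader.
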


Ultimately, the orbital integral we seek can be expressed in terms of the class number of a certain order. We recall some definitions related to orders.
An order $\fo\subset E$ is a subring of $\fo_E$ that is a free $\Z$-module satisfying $\Q\fo=\fo_E$. We note that an order containing $\fo_F$ naturally becomes a free $\fo_F$-module. Let $\fL_E$ denotes the set of all free $\fo_F$-modules $\cL\subset E$ of rank 2. Then there exists a one-to-one correspondence:
\begin{align}
\fL\owns L\stackrel{\sim}{\mapsto} \cL\in\fL_E\label{Corr.latices}
\end{align}
where $\cL:=\{x+y\epsilon\mid \left[\begin{smallmatrix} {x} \\ {y} \end{smallmatrix}\right]\in L\}$. We note that this map is independent of the choice of $\fo_F$-basis of $\fo_E$. As in \eqref{Corr.latices}, in what follows, we write elements of 
$\fL$ in italics and elements of $\fL_E$ in calligraphic script. For $\cL\in\fL_E$, the order $\fo_{\cL}$ corresponding to $\cL$ is defined by 
$
\fo_{\cL}\coloneqq\{ \mu\in E \mid \mu\cL\subset\cL\}
$.
For an order $\fo$ containing $\fo_F$ and a lattice $\cL$, we say that $\cL$ is $\fo$-lattice if $\fo\cL_E=\cL_E$ holds under the standard multiplication in $E$. For a lattice $\cL\in\fL_E$, let us denote an order $\{\mu\in E\mid \mu\cL\subset \cL\}$ by $\fo_{\cL}$ and call this the order corresponding to $\cL$. For $\fo$-lattice $\cL$, we say that $\cL$ is proper if $\fo_{\cL}=\fo$.

For an $\fo$-lattice $\cL$, we say that $\cL$ is invertible if there exists an $\fo$-lattice $\cL^{-1}\in\fL_E$ such that $\cL\cL^{-1}=\fo$, and $\cL$ is principal if there exists $\alpha\in E^\times$ such that $\cL=\alpha\fo$. We denote the group of all invertible $\fo$-lattices by $\cI(\fo)$, and all principal $\fo$-lattices by $\cP(\fo)$. 
\begin{proposition}\label{ProperInvertible}
Let $\fo\subset E$ be an order containing $\fo_F$ and $\cL\in\fL_E$ be an $\fo$-lattice. Then $\cL$ is invertible if and only if $\cL$ is proper.
\end{proposition}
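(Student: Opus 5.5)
The plan is to reduce the statement to the classical fact that an order in a quadratic extension is Gorenstein. The reduction rests on the fact that $\fo_F$ is a principal ideal domain by Condition (I), so everything is a "relative quadratic" version of the argument over $\Z$ (cf. \cite{KL06}*{\S26} or the standard treatment of quadratic orders).

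\emph{Invertible implies proper.} This direction is formal. Suppose $\cL\cL^{-1}=\fo$ and $\mu\in\fo_{\cL}$, so $\mu\cL\subset\cL$. Then
\[
\mu\fo=\mu\cL\cL^{-1}\subset\cL\cL^{-1}=\fo,
\]
hence $\mu\in\fo$. Since $\fo\subset\fo_{\cL}$ holds for any $\fo$-lattice, we conclude $\fo_{\cL}=\fo$.

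\emph{Proper implies invertible.}

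1. Since $\fo\supset\fo_F$ and $\fo$ is free of rank $2$ over $\fo_F$, we can write $\fo=\fo_F\oplus f\epsilon\,\fo_F$ for some $f\in\fo_F\setminus\{0\}$. To see this, note that $\fo\cap\epsilon F$ is a rank-one $\fo_F$-submodule, hence principal, and $1\in\fo$ together with a basis adaptation gives the claim.

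2. After scaling by an element of $F^\times$, which changes neither properness nor invertibility, write $\cL=\fo_F\oplus\tau\fo_F$ with $\tau\in E\setminus F$. Here we use again that $\fo_F$ is a PID: $\cL\cap F$ is principal and can be rescaled to $\fo_F$, and the quotient $\cL/(\cL\cap F)$ is free of rank one.

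3. Let $a\tau^2+b\tau+c=0$ with $a,b,c\in\fo_F$ and $\gcd(a,b,c)=1$. Such a primitive triple exists because $\fo_F$ is a UFD.

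4. Compute $\fo_{\cL}$ directly: an element $x+y\tau$ stabilizes $\cL$ iff $a\mid y$. Hence $\fo_{\cL}=\fo_F\oplus a\tau\,\fo_F$, and properness means $\fo=\fo_F[a\tau]$.

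5. Take $\cL'=\fo_F\oplus\bar\tau\fo_F$, where the bar denotes the nontrivial automorphism of $E/F$. We have
\[
a\,\cL\cL'=\langle a,\;a\tau,\;a\bar\tau,\;a\tau\bar\tau\rangle=\langle a,\;a\tau,\;-b,\;c\rangle\cdot(\text{generated over }\fo_F),
\]
using $a(\tau+\bar\tau)=-b$ and $a\tau\bar\tau=c$. Since $\gcd(a,b,c)=1$, this is $\langle 1,a\tau\rangle=\fo$. Thus $\cL^{-1}=a^{-1}\cL'$, which is an $\fo$-lattice because $\fo_{\cL'}=\fo_{\cL}$ by conjugation symmetry.

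The main obstacle is the bookkeeping in step 5: checking that the $\fo_F$-span of the four products really equals $\fo$. It needs the gcd condition in a PID, replacing the Bézout argument over $\Z$. It also needs $a\tau\in\fo_E$ so that $\fo_F[a\tau]$ is genuinely an order. I would verify both using $N_{E/F}$ and $\mathrm{tr}_{E/F}$ of $a\tau$, which equal $ac$ and $-b$, both in $\fo_F$.
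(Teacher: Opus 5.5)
Your proposal is correct and follows the paper's proof essentially step for step. The forward direction is the same formal argument, and for the converse you likewise normalize to $\cL=\fo_F\oplus\tau\fo_F$, compute $\fo_{\cL}=\fo_F\oplus a\tau\fo_F$ from a primitive minimal polynomial as in Lemma~\ref{Corr. lattice}, and multiply by the conjugate lattice. One slip in step 5: your computation gives $\cL\cdot(a\cL')=\fo$, so $\cL^{-1}=a\cL'$ (the paper's $a\overline{\cL}$), not $a^{-1}\cL'$.
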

To prove the proposition, we need the following lemma
\begin{lemma}\label{Corr. lattice}
Let $\cL\in\fL_E$ be a lattice of the form $\cL=\fo_F\oplus\beta\fo_{F}$ with $\beta\in E$ whose minimal polynomial over $\fo_F$ is $aX^2+bX+c$ with relatively prime elements $a,b,c\in\fo_F$. Then $\fo_{\cL}=\fo_F\oplus a\beta\fo_F$.
\end{lemma}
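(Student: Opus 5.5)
We compute $\fo_{\cL}=\{\mu\in E\mid \mu\cL\subset\cL\}$ directly in the $\fo_F$-basis $\{1,\beta\}$. This works the same way as the classical argument over $\Z$ (cf. \cite{KL06}*{\S26}), since $\fo_F$ is a principal ideal domain by Condition (I), so divisibility arguments with coprime elements go through.

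We begin with the inclusion $\fo_F\oplus a\beta\fo_F\subset\fo_{\cL}$. Clearly $\fo_F\cL\subset\cL$, so it suffices to check that $a\beta$ preserves $\cL$. First, $a\beta\cdot 1=a\beta\in\cL$. Second, from $a\beta^2+b\beta+c=0$ we get $a\beta\cdot\beta=a\beta^2=-b\beta-c\in\cL$. Hence $a\beta\cL\subset\cL$.

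For the converse, we take $\mu\in\fo_{\cL}$. Since $\mu=\mu\cdot1\in\cL$, we can write $\mu=x+y\beta$ with $x,y\in\fo_F$. Because $x\in\fo_F\subset\fo_{\cL}$, we may subtract it and assume $\mu=y\beta$. The condition $\mu\beta\in\cL$ then reads
\[
y\beta^2=-\frac{yb}{a}\beta-\frac{yc}{a}\in\fo_F\oplus\beta\fo_F .
\]
Here $1,\beta$ are linearly independent over $F$, since $\beta\notin F$ as its minimal polynomial has degree $2$. Comparing coefficients therefore gives $a\mid yb$ and $a\mid yc$ in $\fo_F$. Trivially $a\mid ya$ as well, so $a$ divides $\gcd(ya,yb,yc)=y\gcd(a,b,c)$. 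This gcd makes sense because $\fo_F$ is a PID, and $\gcd(a,b,c)$ is a unit by hypothesis. Thus $a\mid y$, and so $\mu\in a\beta\fo_F$.

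We also record that $a\neq0$, since $aX^2+bX+c$ has degree $2$, so dividing by $a$ above is legitimate. The main point to handle carefully is this coprimality step. If one prefers to avoid a gcd of three elements, one can argue prime by prime: for each prime $\fp$ of $\fo_F$ dividing $a$, some one of $a,b,c$ is prime to $\fp$, so comparing $\fp$-adic valuations gives $v_\fp(y)\ge v_\fp(a)$. Everything else in the argument is routine.
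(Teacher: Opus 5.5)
Your proof is correct and follows the same route as the paper, which simply defers to \cite{KL06}*{Lemma 26.10}. That argument is exactly your direct computation in the basis $\{1,\beta\}$: show $a\beta\cL\subset\cL$ via $a\beta^2=-b\beta-c$, and conversely get $a\mid yb$ and $a\mid yc$, hence $a\mid y$ by coprimality, carried over from $\Z$ to the principal ideal domain $\fo_F$.
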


\begin{proof}
This can be easily shown in the same way as \cite{KL06}*{Lemma 26.10}
\end{proof}

\begin{proof}[Proof of Proposition \ref{ProperInvertible}]
Suppose that $\cL$ is invertible, then the inclusion $\fo\subset\fo_{\cL}$ is clear. For any $\lambda\in\fo_{\cL}$, we have $\lambda\in\lambda\fo=\lambda\cL\cL^{-1}\subset\cL\cL^{-1}=\fo$. Thus, we have $\fo\supset\fo_{\cL}$, and hence $\fo=\fo_{\cL}$.
Conversely, suppose that $\cL$ is proper $\fo$-lattice. By multiplying $\cL$ by a certain element in $E^\times$, We may assume that $\cL=\fo_F\oplus\beta\fo_F$ for some $\beta\in F^2$. Let $aX^2+bx+c$ be a minimal polynomial of $\beta$ over $\fo_F$ with relatively prime elements $a,b,c\in\fo_F$. Define the conjugate lattice $\overline{\cL}$ of $\cL$ by 
$$
\overline{\cL}:=\{\left[\begin{smallmatrix} {\overline{x}} \\ {\overline{y}} \end{smallmatrix}\right]\mid \left[\begin{smallmatrix} {x} \\ {y} \end{smallmatrix}\right]\in \cL\}
$$
where $\overline{x}$ is the unique non-trivial Galois conjugate of $x$ on $E/F$. Then we get
\begin{align*}
\cL\overline{\cL}=
\fo_F+\beta\fo_F+\overline{\beta}\fo_F&+\beta\overline{\beta}\fo_F=
\fo_F+\beta\fo_F+\left(-\frac{b}{a}-\beta\right)\fo_F+\frac{c}{a}\fo_F\\
&=a^{-1}(a\fo_F+b\fo_F+c\fo_F)+\beta\fo_F=a^{-1}(\fo_F+a\beta\fo_F)=a^{-1}\fo_{\cL}=a^{-1}\fo\\
\end{align*}
by Lemma \ref{Corr. lattice}. Therefore, we have $\cL^{-1}=a\overline{\cL}$, and hence $\cL$ is invertible.
\end{proof}

\begin{proposition}\label{KL prop26.23}
    There exist one-to-one correspondences between the following five sets:
    \begin{itemize}
        \item[(i)] $\overline{\rG_\gamma(F)}\bs\overline{\rG}(\A_f)/\overline{K_f}$
        \item[(ii)] $\rG_\gamma(F)\bs\rG(\A_f)/K_f$
        \item[(iii)] $\rG_\gamma(F)\bs\fL$
        \item[(iv)] $E^\times\bs\fL_E$
        \item[(v)] $\cup_{\substack{\fo\subset E:\text{order}\\ \fo_F\subset\fo}}\left(\cP(\fo)\bs\cI(\fo)\right)$
    \end{itemize}
\end{proposition}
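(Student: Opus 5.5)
I will establish the chain (i)$\leftrightarrow$(ii)$\leftrightarrow$(iii)$\leftrightarrow$(iv)$\leftrightarrow$(v) one link at a time, following \cite{KL06}*{Proposition 26.23}.

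\emph{(i)$\leftrightarrow$(ii).} I will use Condition (I) in the form \eqref{eq:I1} restricted to finite adeles: $\A_f^\times=F^\times U_f$. Hence $Z(\A_f)=Z(F)\,Z(U_f)$ with $Z(U_f)\subset K_f$. Since $Z(F)=F^\times\subset E^\times=\rG_\gamma(F)$, passing to the quotient by the center changes nothing on the double cosets. Concretely, the natural surjection $\rG_\gamma(F)\bs\rG(\A_f)/K_f\to\overline{\rG_\gamma(F)}\bs\overline{\rG}(\A_f)/\overline{K_f}$ is injective, because any $z\in Z(\A_f)$ factors as $z_Fz_U$ with $z_F$ absorbed on the left and $z_U$ absorbed on the right.

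\emph{(ii)$\leftrightarrow$(iii).} By Corollary \ref{Cor.1}, $\Psi\circ\lambda$ is a bijection $\rG(\A_f)/K_f\to\fL$. I need to check that it is equivariant for left multiplication by $\rG(F)$, and in particular by $\rG_\gamma(F)$. This is immediate: $\Psi(\delta L_v)=\delta\Psi(L_v)$ for $\delta\in\rG(F)$, since $\delta$ preserves $F^2$. The bijection therefore descends to the quotients by $\rG_\gamma(F)$.

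\emph{(iii)$\leftrightarrow$(iv).} I will use the correspondence \eqref{Corr.latices}, $L\mapsto\cL$, given by $[x\ y]^{T}\mapsto x+y\epsilon=[1\ \epsilon]\,[x\ y]^{T}$. I must show that the action of $\mu\in\rG_\gamma(F)\simeq E^\times$ on $L$ corresponds to multiplication by $\mu$ in $E$. Every element of $\rG_\gamma(F)$ is an $F$-polynomial in $\gamma$, and $\gamma$ is an $F$-polynomial in $\epsilon$; so it suffices to prove $[1\ \epsilon]\,(\mu v)=\mu\,([1\ \epsilon]v)$ for $\mu=\epsilon$. This is exactly Lemma \ref{multiplicationLem}, \eqref{product in E}, after which the case $\mu\in F$ is trivial. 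The map is therefore $E^\times$-equivariant and descends.

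\emph{(iv)$\leftrightarrow$(v).} Each $\cL\in\fL_E$ is a proper $\fo_\cL$-lattice. Here $\fo_\cL$ is an order containing $\fo_F$: it contains $\fo_F$ because $\cL$ is an $\fo_F$-module, and it lies in $\fo_E$ and spans $E$ by the usual argument. By Proposition \ref{ProperInvertible}, $\cL\in\cI(\fo_\cL)$. Since $\fo_{\mu\cL}=\fo_\cL$, the order is constant on $E^\times$-orbits, and the map $E^\times\cL\mapsto(\fo_\cL,\ \cP(\fo_\cL)\cL)$ is well defined. It is injective, since $\cP(\fo)\cL=\cP(\fo)\cL'$ means $\cL'=\alpha\fo\cL=\alpha\cL$. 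It is surjective, since every invertible $\fo$-lattice is proper (Proposition \ref{ProperInvertible}) and so lies in $\fL_E$ with order $\fo$; by (I), any $\fo_F$-lattice of rank 2 is free, so it does lie in $\fL_E$.

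\emph{Main obstacle.} I expect the hardest step to be the equivariance in (iii)$\leftrightarrow$(iv): pinning down how the fixed matrix $\gamma$ and the chosen basis $\{1,\epsilon\}$ are compatible, which is why the conjugation in Lemma \ref{multiplicationLem} is needed. I also need to verify carefully that $\fo_\cL$ is a genuine order in the paper's sense, namely that $\fo_\cL\subset\fo_E$ because its elements are integral over $\fo_F$.
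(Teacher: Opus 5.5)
Your proposal is correct and follows the same route as the paper: you absorb the center using $\A_f^\times=F^\times U_f$ from Condition (I), you descend the bijection of Corollary \ref{Cor.1} and the correspondence \eqref{Corr.latices} to the quotients via $\rG(F)$-equivariance and Lemma \ref{multiplicationLem}, and you use Proposition \ref{ProperInvertible} to identify $E^\times\bs\fL_E$ with $\bigcup_{\fo}\cP(\fo)\bs\cI(\fo)$. The only difference is that you write out the equivariance, injectivity, and order-theoretic checks that the paper's short proof leaves to the reader.
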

\begin{proof}
The correspondence (i)$\leftrightarrow$(ii) follows from the isomorphism
$$\overline{\rG_\gamma(F)}\bs\overline{\rG}(\A_f)/\overline{K_f}\cong\rG_\gamma(F)\bs\rG(\A_f)/K_f
$$
due to  $Z(F)\subset\rG_{\gamma}(F)$, $Z(\widehat{\fo_F})\subset K_f$, and the strong approximation \eqref{eq:I1}.
The correspondences (ii)$\leftrightarrow$(iii) and (iii)$\leftrightarrow$(iv) are direct consequences from Corollary \ref{Cor.1}, Lemma \ref{multiplicationLem}, and \eqref{Corr.latices}. Finally, we see the identification (iv) with (v). Indeed, 
$$
E^\times\bs\fL_E=\cup_{\substack{\fo\subset E:\text{order}\\ \fo_F\subset\fo}}\left(E^\times\bs\{\cL\in\fL_E\mid \fo_{\cL}=\fo\}\right)=\cup_{\substack{\fo\subset E:\text{order}\\ \fo_F\subset\fo}}\left(\cP(\fo)\bs\cI(\fo)\right)
$$
by Proposition \ref{ProperInvertible}. This completes the proof.
\end{proof}

\begin{lemma}\label{Support}
For $g\in\rG(\A_f)$, let $L=\Psi\circ\lambda(L)$ be the corresponding lattices in $\fL$. Then $h_{n}(g^{-1}\gamma g)\ne0$ if and only if $n/n_0\in\fo_F^\times$ and $\gamma L\subset L$.  
\end{lemma}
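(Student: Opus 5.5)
The plan is to unwind the definition of $h_n$ as the characteristic function of $T_{n,\A_f}$ and translate membership in $T_{n,\A_f}$ into the two stated conditions. First I will describe $T_{n,\A_f}$ locally. From \eqref{eq:hecke1}, together with \eqref{eq:I1} and \eqref{eq:I2}, one has
\[
T_{n,\A_f}=\prod_{v<\infty}\{x_v\in \M_2(\fo_{F_v})\mid \det x_v\in n\,\fo_{F_v}^\times\},
\]
which is the usual adelic description of the Hecke set. Indeed, the right-hand side is bi-$K_f$-invariant and compact. Intersecting $\rG(F)$ times it with $\rG(F_\inf)$ recovers exactly the integral matrices whose determinant lies in $\fo_F^\times n$, because an element of $F$ lying in $\fo_{F_v}$ (respectively in $n\fo_{F_v}^\times$) for every $v$ lies in $\fo_F$ (respectively in $n\fo_F^\times$, using $\bigcap_v(n\fo_{F_v}^\times\cap F)=n\fo_F^\times$).

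Granting this, $h_n(g^{-1}\gamma g)\neq0$ splits into two conditions. The first is $\det(g^{-1}\gamma g)=n_0\in n\fo_{F_v}^\times$ for all finite $v$. Since $n_0\in F$ and $n$ is fixed, this says that $n_0/n$ is a unit at every finite place, i.e.\ $n/n_0\in\fo_F^\times$. The second is $g_v^{-1}\gamma g_v\in\M_2(\fo_{F_v})$ for all $v$, which is equivalent to $\gamma g_v\fo_{F_v}^2\subset g_v\fo_{F_v}^2$, that is, $\gamma L_v\subset L_v$, where $(L_v)=\lambda(g)$.

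It then remains to pass between the local and global lattice statements via Proposition \ref{LGPlattice}. Put $L=\Psi((L_v))$. If $\gamma L_v\subset L_v$ for all $v$, then
\[
\gamma L=\bigcap_v\bigl(\gamma L_v\cap F^2\bigr)\subset \bigcap_v\bigl(L_v\cap F^2\bigr)=L,
\]
using $\gamma F^2=F^2$. Conversely, if $\gamma L\subset L$, then tensoring with $\fo_{F_v}$ and using $L\otimes\fo_{F_v}=L_v$ (the injectivity part of the proof of Proposition \ref{LGPlattice}) gives $\gamma L_v\subset L_v$.

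The only step that requires real care is the local description of $T_{n,\A_f}$. It is implicitly defined via \eqref{eq:hecke1}, so I would verify it by writing $T_{n,\A_f}$ as $\bigsqcup_t \mathrm{pr}_f(\beta_t^{-1})K_f$, as in the proof of Lemma \ref{lem:hecke1}, and comparing with the standard coset decomposition of the local Hecke sets. Everything else is formal.
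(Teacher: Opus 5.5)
Your proposal is correct and is essentially the paper's proof: the paper also takes the support of $h_n$ to be $\{x\in\M_2(\widehat{\fo_F})\cap\rG(\A_f)\mid\det x\in nU_f\}$, splits $h_n(g^{-1}\gamma g)\neq0$ into the condition $n_0\in nU_f$ and the local conditions $\gamma g_v\fo_{F_v}^2\subset g_v\fo_{F_v}^2$, and passes to $\gamma L\subset L$ via Proposition \ref{LGPlattice}; your version additionally spells out the unit argument and both directions of the local--global step. One caution: the decomposition $\bigsqcup_t\mathrm{pr}_f(\beta_t^{-1})K_f$ from the proof of Lemma \ref{lem:hecke1} consists of inverses of integral matrices (with determinants in $n^{-1}U_f$), so comparing against it will not reproduce your product set; instead, as the paper does, take the standard description as the meaning of $h_n$. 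This is harmless downstream, because only $\overline{h_n}$ enters the trace formula, $\gamma^{-1}=n_0^{-1}(t_0-\gamma)$, and $(t_0-\gamma)L\subset L$ holds iff $\gamma L\subset L$.
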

\begin{proof}
For an element $x\in\rG(\A_f)$, it is easy to check that $x$ is in the support of $h$ if and only if $\det x\in\alpha U_f$ and 
$
x\left[\begin{smallmatrix} {\widehat{\fo_F}} \\ {\widehat{\fo_F}} \end{smallmatrix}\right]
\subset
\left[\begin{smallmatrix} {\widehat{\fo_F}} \\ {\widehat{\fo_F}} \end{smallmatrix}\right]
$. Hence, the condition $h_{\alpha}(g^{-1}\gamma g)\ne 0$ is equivalent to that $g$ satisfies
$$
\det(g^{-1}\gamma g)=\det \gamma=n_0\in\alpha U_f
$$
and
$$
\gamma g\left[\begin{smallmatrix} {\fo_{F_v}} \\ {\fo_{F_v}} \end{smallmatrix}\right]
\subset
g\left[\begin{smallmatrix} {\fo_{F_v}} \\ {\fo_{F_v}} \end{smallmatrix}\right]
\text{ for all finite places $v$}.
$$
By the local-global principle of lattices (Proposition \ref{LGPlattice}), the second condition is equivalent to $\gamma L\subset L$. Hence the Lemma follows.
\end{proof}
\begin{lemma}\label{KL Lem26.25}
For $g\in\rG(\A_f)$, let $\cL\in\fL_E$ be the lattice corresponding to $g$ via the composition of $\Psi\circ\lambda$ and \eqref{Corr.latices}. Then 
$$
\#(g\overline{K_f}g^{-1}\cap \overline{\rG_{\gamma}(F)})
=[\fo_{\cL}^\times:\fo_F^\times].
$$
\end{lemma}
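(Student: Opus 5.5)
We identify the group $g\overline{K_f}g^{-1}\cap \overline{\rG_{\gamma}(F)}$ concretely, via the dictionary of Corollary \ref{Cor.1} and \eqref{Corr.latices}, and compare it with $\fo_\cL^\times/\fo_F^\times$. Set $L=\Psi\circ\lambda(g)\in\fL$, and let $\cL\in\fL_E$ be the corresponding lattice in $E$.

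\textbf{Step 1: identify $gK_fg^{-1}\cap\rG(F)$.} This is the stabilizer of $L$ in $\rG(F)$. Indeed, $gK_fg^{-1}$ is the stabilizer in $\rG(\A_f)$ of the family $(g_v\fo_{F_v}^2)_v$. For $x\in\rG(F)$, the identity $x(g_v\fo_{F_v}^2)_v=(g_v\fo_{F_v}^2)_v$ is equivalent to $xL=L$, by the bijectivity of $\Psi$ (Proposition \ref{LGPlattice}) together with $\Psi$ commuting with the action of $\rG(F)$.

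\textbf{Step 2: identify the stabilizer inside $\rG_\gamma(F)$.} Intersecting with $\rG_\gamma(F)=E^\times$, Lemma \ref{multiplicationLem} and \eqref{Corr.latices} show that the action of $\mu\in E^\times$ on column vectors corresponds to multiplication by $\mu$ on $\cL$. (The transpose convention has to be checked here: the row identity $[1\ \epsilon]\mu=\mu[1\ \epsilon]$ gives $\mu(x+y\epsilon)=[1\ \epsilon]\mu\left[\begin{smallmatrix} x\\ y\end{smallmatrix}\right]$.) So the stabilizer consists of those $\mu\in E^\times$ with $\mu\cL=\cL$. Such $\mu$ satisfy $\mu\in\fo_\cL$, and also $\mu^{-1}\in\fo_\cL$. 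The stabilizer is therefore exactly $\fo_\cL^\times$.

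\textbf{Step 3: pass to $\overline{\rG}$.} We must show that
\[
g\overline{K_f}g^{-1}\cap\overline{\rG_\gamma(F)}=\bigl(gK_fg^{-1}Z(\A_f)\cap \rG_\gamma(F)\bigr)/Z(F).
\]
Take $x\in\rG_\gamma(F)$ with $x=gkg^{-1}z$, where $z\in Z(\A_f)$. By \eqref{eq:I1} we can write $z=z_0u$ with $z_0\in Z(F)$ and $u\in Z(\widehat{\fo_F})\subset K_f$; the finite-adelic version of \eqref{eq:I1} holds because the class number is one. Hence $z_0^{-1}x\in gK_fg^{-1}\cap\rG_\gamma(F)=\fo_\cL^\times$. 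The elements of $Z(F)$ lying in $\fo_\cL^\times$ are exactly $\fo_F^\times$.

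The group in question is therefore $\fo_\cL^\times F^\times/F^\times\cong\fo_\cL^\times/(\fo_\cL^\times\cap F^\times)=\fo_\cL^\times/\fo_F^\times$. This index is finite, since $E$ is CM and so $[\fo_E^\times:\fo_F^\times]<\infty$. This gives the claimed count $[\fo_\cL^\times:\fo_F^\times]$.

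The main obstacle is Step 3, the passage to the projective group. It needs the decomposition of $Z(\A_f)$ from condition (I), and care that no extra elements arise from scalars, for example elements $x$ with $x^2$ scalar. Such elements are handled automatically because $x$ is required to lie in $E^\times$.
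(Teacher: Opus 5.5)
Your proof is correct and follows the same route as the paper. Both arguments identify $g\overline{K_f}g^{-1}\cap\overline{\rG_\gamma(F)}$ with the image of $\fo_\cL^\times$, using the local--global principle for lattices together with Lemma \ref{multiplicationLem}, and then divide by $\fo_\cL^\times\cap F^\times=\fo_F^\times$. Your Step 3 is in fact more careful than the paper: you write $z=z_0u$ with $z_0\in Z(F)$ and $u\in Z(\widehat{\fo_F})$ using class number one, whereas the paper glosses over this when it asserts $z_2\delta\cL=\cL$ for $z_2\in Z(\A_f)$.
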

\begin{proof} 
Since $\overline{\fo_{\cL}^\times}\cong \fo_{\cL}^\times/(Z(\A_f)\cap\fo_{\cL}^\times)=\fo_{\cL}^\times/\fo_F^\times$, it suffices to prove $g\overline{K_f}g^{-1}\cap \overline{\rG_{\gamma}(F)}=\overline{\fo_{\cL}^\times}$.  Let $\overline{x}\in g\overline{K_f}g^{-1}\cap \overline{\rG_{\gamma}(F)}$ be an element represented by $x\in\rG(\A_f)$. Then there exist $z_1,z_2\in Z(\A_f)$, and $\delta\in\rG_{\gamma}(F)$ such that $z_1^{-1}x=z_2\delta \in gK_fg^{-1}$. This yields that $z_2\delta g\left[\begin{smallmatrix} {\fo_{F_v}} \\ {\fo_{F_v}} \end{smallmatrix}\right]=g\left[\begin{smallmatrix} {\fo_{F_v}} \\ {\fo_{F_v}} \end{smallmatrix}\right]$ for all finite places $v$. The local-global principle of lattices and Lemma \ref{multiplicationLem}, imply $z_2\delta\cL=\cL$. This is equivalent to $z_1^{-1} x=z_2\delta\in\fo_{\cL}^\times$. Hence $\overline{x}\in\overline{\fo_{\cL}^\times}$. Therefore we have 
$g\overline{K_f}g^{-1}\cap \overline{\rG_{\gamma}(F)}\subset \overline{\fo_{\cL}^\times}$. The converse inclusion follows by reversing the above process. 
\end{proof}

\begin{proposition}\label{prop:KL}
Let $\gamma\in \rG(F)$ be an elliptic element satisfying the condition \eqref{gamma} and $n/n_0\in\fo_F^\times$. Then we have
\begin{align*}
\Phi(\overline{\gamma},\overline{h})=\sum_{\substack{\fo\subset E:\text{order}\\ \gamma\in\fo, \fo_F\subset\fo}}
\frac{h(\fo)}{[\fo^\times:\fo_F^\times]}=h_{E}[\fo_{E}^\times:\fo_F^\times]^{-1}\sum_{\substack{\fo\subset E:\text{order}\\ \gamma\in\fo, \fo_F\subset\fo}}N_{F/\Q}(\ff_{\fo})
\prod_{\fp|\ff_\fo}(1-(\tfrac{t^2-4n}{\fp})N_{F/\Q}(\fp)^{-1})
\end{align*}
where $h(\fo):=\#(\cP(\fo)\bs\cI(\fo))$ and $\ff_\fo$ is the conductor of $\fo$, which is defined to be an ideal $\fo_F$ satisfying $\fo=\fo_F\oplus\ff_{\fo}\epsilon$.
\end{proposition}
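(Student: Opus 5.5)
We follow the strategy of \cite{KL06}*{\S 26}: unfold the orbital integral into a finite sum over double cosets, translate each double coset into a lattice class using Proposition \ref{KL prop26.23}, and then count lattices by their multiplier orders.

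\emph{Step 1: unfolding.} Since $\overline{h}$ is bi-$\overline{K_f}$-invariant and $\int_{K_f}dg=1$, we decompose
\[
\overline{\rG_\gamma(F)}\bs\overline{\rG}(\A_f)=\bigsqcup_{g}\overline{\rG_\gamma(F)}\bs\overline{\rG_\gamma(F)}\,g\,\overline{K_f},
\]
where $g$ runs over representatives of the double cosets in set (i) of Proposition \ref{KL prop26.23}. The double coset through $g$ has volume
\[
\#(g\overline{K_f}g^{-1}\cap\overline{\rG_\gamma(F)})^{-1}\vol(\overline{K_f}).
\]
With the measure on $\overline{\rG}(\A_f)$ inherited from $\rG(\A_f)$ and $Z(\A_f)$, we expect $\overline{h}(g^{-1}\gamma g)=h(g^{-1}\gamma g)$ up to this normalization. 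I will check that the normalizations combine to $1$; this requires $Z(\A_f)=Z(F)Z(\widehat{\fo_F})$, which follows from \eqref{eq:I1}. This gives
\[
\Phi(\overline\gamma,\overline h)=\sum_{g}\frac{h(g^{-1}\gamma g)}{\#(g\overline{K_f}g^{-1}\cap\overline{\rG_\gamma(F)})}.
\]

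\emph{Step 2: lattice translation.} Pass from $g$ to the lattice $\cL\in\fL_E$ through the chain (ii)$\to$(iii)$\to$(iv) of Proposition \ref{KL prop26.23}. By Lemma \ref{KL Lem26.25}, the denominator equals $[\fo_{\cL}^\times:\fo_F^\times]$. By Lemma \ref{Support}, together with the hypothesis $n/n_0\in\fo_F^\times$ and Lemma \ref{multiplicationLem}, the numerator is $1$ exactly when $\gamma\cL\subset\cL$, that is, when $\gamma\in\fo_{\cL}$, and it is $0$ otherwise. Grouping the classes in (iv) by their order $\fo=\fo_{\cL}$ and using (iv)$\leftrightarrow$(v) with Proposition \ref{ProperInvertible}, each $\fo$ with $\fo_F\subset\fo$ and $\gamma\in\fo$ contributes $h(\fo)/[\fo^\times:\fo_F^\times]$. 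This proves the first equality.

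\emph{Step 3: class number of an order.} The second equality is the relative analogue of the classical formula for $h(\fo)$. Write $\fo=\fo_F+\ff\fo_E$ with $\ff=\ff_\fo$; such orders correspond bijectively to ideals $\ff$ of $\fo_F$, since $\fo_E=\fo_F\oplus\epsilon\fo_F$. We use the exact sequence
\[
1\to\fo^\times\to\fo_E^\times\to(\fo_E/\ff\fo_E)^\times/(\fo_F/\ff)^\times\to\operatorname{Pic}(\fo)\to\operatorname{Pic}(\fo_E)\to1,
\]
where surjectivity at the right end comes from invertible ideals prime to $\ff$. It gives
\[
\frac{h(\fo)}{[\fo^\times:\fo_F^\times]}=\frac{h_E}{[\fo_E^\times:\fo_F^\times]}\cdot\frac{\#(\fo_E/\ff\fo_E)^\times}{\#(\fo_F/\ff)^\times}.
\]
The last quotient is computed locally at each $\fp\mid\ff$ and equals $N_{F/\Q}(\fp)^{e}\bigl(1-(\tfrac{t^2-4n}{\fp})N_{F/\Q}(\fp)^{-1}\bigr)$, where $e=\mathrm{ord}_\fp\ff$. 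This is the standard split/inert/ramified count of $(\fo_E/\fp^{e}\fo_E)^\times$ against $(\fo_F/\fp^{e})^\times$.

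The main obstacle I expect is the bookkeeping in Step 1 and Step 3. In Step 1 the measure normalizations on $\overline{\rG}(\A_f)$ and the passage between $\overline{\rG_\gamma(F)}$ and $Z$ must produce exactly $1/[\fo^\times:\fo_F^\times]$, with no stray factor of $2$ or of $[\fo_F^\times:\fo_F^{\times2}]$. In Step 3 the unit-index terms must cancel correctly across the exact sequence. I would first check both points on the case $\ff=\fo_F$.
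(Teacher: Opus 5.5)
Your proposal is correct and follows the paper's proof: you unfold over $\overline{\rG_\gamma(F)}\bs\overline{\rG}(\A_f)/\overline{K_f}$, take the volume factor $[\fo_{\cL}^\times:\fo_F^\times]^{-1}$ from Lemma \ref{KL Lem26.25} and the support condition $\gamma\in\fo_{\cL}$ from Lemma \ref{Support}, and then group by orders via Proposition \ref{KL prop26.23}. The only difference is that you derive the second equality from the exact sequence for $\mathrm{Pic}(\fo)$ (conductor $\ff\fo_E$, with $\fo/\ff\fo_E\cong\fo_F/\ff$), whereas the paper simply cites \cite{Neu99}*{Theorem 12.12}, whose proof is that same sequence. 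One small correction: the normalization in Step 1 does not use $Z(\A_f)=Z(F)Z(\widehat{\fo_F})$; it only uses the determinant condition, which forces $z\in Z(\widehat{\fo_F})$ in $\int_{Z(\A_f)}h(zg^{-1}\gamma g)\,\rd z$.
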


\begin{proof}
Because the function 
$\overline{\rG}(\A_f)\owns g\mapsto h(g^{-1}\gamma g)$ is right $\overline{K_f}$-invariant, we have
$$
\Phi(\overline{\gamma},\overline{h})=\sum_{\overline{\rG_{\gamma}(F)}\bs\overline{\rG}(\A_f)/\overline{K_f}}
\mathrm{vol}_{\overline{\rG_{\gamma}(F)}\bs\overline{\rG}(\A_f)}(\overline{\rG_{\gamma}(F)} g\overline{K_f})\cdot \int_{Z(\A_f)}h(zg^{-1}\gamma g)\rd z.
$$
It is obvious that the last $Z(\A_f)$-integral equals $h(g^{-1}\gamma g)$ in view of the support condition. The corresponding volume factor can be calculated by the right-$\rG(\A_f)$ invariance of the measure and by using Lemma \ref{KL Lem26.25} as follows:
\begin{align*}
\mathrm{vol_{\overline{\rG_{\gamma}(F)}\bs\overline{\rG}(\A_f)}}(\overline{\rG_{\gamma}(F)} g\overline{K_f})&=
\mathrm{vol_{\overline{\rG_{\gamma}(F)}\bs\overline{\rG}(\A_f)}}(\overline{\rG_{\gamma}(F)} g\overline{K_f}g^{-1})\\
&=\frac{\mathrm{vol}_{\overline{\rG}(\A_f)}(g\overline{K_f}g^{-1})}{\#( g\overline{K_f}g^{-1}\cap\overline{\rG_{\gamma}(F)})}
=(\fo^\times:\fo_F^\times)^{-1}
\end{align*}
where $\fo=\fo_{\cL}$ and $\cL$ is a lattice in $\cL_E$ corresponding to $g$. By Lemma \ref{Support}, $h(g^{-1}\gamma g)\ne 0$ if and only if $\gamma\in\fo_{\cL}$. This and the correspondence (ii)$\leftrightarrow$(v) in Proposition \ref{KL prop26.23} reveal the first equation. The second equation follows from \cite{Neu99}*{Theorem 12.12}.
\end{proof}

\begin{proposition}\label{prop:KL2}
Let $\gamma\in\rG(F)$ be as in Proposition \ref{prop:KL}. Then we have $\Phi(\gamma,\overline{h})=0$ unless $t,n\in\fo_{F}$, equivalently unless $\gamma\in\fo_E$. In this case, it holds that
$$
\Phi(\gamma,\overline{h})=H_F(4n-t^2).
$$
Here $H_F(\xi)$ is the generalized $\xi$th Hurwitz class number defined by \eqref{GHCN}.

\end{proposition}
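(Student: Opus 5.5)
The plan is to reduce the statement to the second expression in Proposition \ref{prop:KL} and match it term by term with the definition \eqref{GHCN} of $H_F$, taking $\xi=4n-t^2$ where $t=t_0$ and $n=n_0$. As in the statement, I first assume $n$ and $n_0$ are identified up to the unit $n/n_0\in\fo_F^\times$; I revisit this at the end.

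\emph{Vanishing case.} Every order $\fo$ with $\fo_F\subset\fo$ lies in $\fo_E$. Hence if $\gamma\notin\fo_E$, the index set in Proposition \ref{prop:KL} is empty and $\Phi$ vanishes. Since $\gamma$ is a root of $X^2-tX+n$, it is integral over $\fo_F$ if and only if $t,n\in\fo_F$. This gives the first assertion.

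\emph{Parametrising the orders.} Now suppose $\gamma\in\fo_E$. Because $\fo_F$ is a PID (Condition (I)), every order $\fo$ with $\fo_F\subset\fo\subset\fo_E$ has the form $\fo=\fo_F\oplus\ff\,\epsilon$ for a unique integral ideal $\ff=\ff_\fo$. Conversely, each integral ideal $\ff$ gives such an order $\fo_\ff=\fo_F+\ff\fo_E$, and this is a ring. So the orders in Proposition \ref{prop:KL} correspond exactly to the ideals $\ff$ with $\gamma\in\fo_\ff$.

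\emph{The key step.} I then determine which $\ff$ satisfy $\gamma\in\fo_\ff$. The condition is $\fo_F[\gamma]\subset\fo_\ff$. The order $\fo_F[\gamma]=\fo_F\oplus\gamma\fo_F$ is itself $\fo_{\ff_\gamma}$ for some conductor $\ff_\gamma$. Since the orders $\fo_\ff$ are totally ordered by reverse divisibility of conductors, $\gamma\in\fo_\ff$ holds if and only if $\ff\mid\ff_\gamma$.

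To compute $\ff_\gamma$, compare relative discriminants. On one hand,
\[
\mathrm{disc}(\fo_\ff/\fo_F)=\ff^2\,\mathrm{disc}(\fo_E/\fo_F)=\ff^2\fD_{t^2-4n}.
\]
On the other hand, the discriminant of the basis $\{1,\gamma\}$ is $(t^2-4n)$. Therefore
\[
(t^2-4n)=\ff_\gamma^2\,\fD_{t^2-4n}.
\]
Since $-\xi=t^2-4n$, the defining relation $(-\xi)=\fD_{-\xi}\fF_{-\xi}^2$ gives $\ff_\gamma=\fF_{-\xi}$. Moreover $E=F(\sqrt{t^2-4n})=F(\sqrt{-\xi})$, and $-\xi=t^2-4n\equiv t^2\pmod 4$ is congruent to a square modulo $4$. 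So we are in the case where \eqref{GHCN} is nonzero, and the Artin symbols $(\tfrac{t^2-4n}{\fp})$ in Proposition \ref{prop:KL} coincide with $(\tfrac{-\xi}{\fp})$.

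\emph{Conclusion of the matching.} Substituting this into Proposition \ref{prop:KL}, the sum over orders becomes the sum over $\fa\mid\fF_{-\xi}$ of $N_{F/\Q}(\fa)\prod_{\fp\mid\fa}(1-(\tfrac{-\xi}{\fp})N_{F/\Q}(\fp)^{-1})$. The prefactor is $h_E[\fo_E^\times:\fo_F^\times]^{-1}$, which is the prefactor in \eqref{GHCN} because $h_F=1$. This gives $\Phi=H_F(4n-t^2)$.

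\emph{Main obstacle.} I expect the hardest part to be justifying $\fo_F[\gamma]=\fo_{\fF_{-\xi}}$ carefully, including at primes above $2$. Concretely, one must check that the order with $\fo_F$-basis $\{1,\gamma\}$ has discriminant exactly $(t^2-4n)$. One must also check that the inclusion $\fo_{\ff}\subset\fo_{\ff'}$ holds exactly when $\ff'\mid\ff$; this can be read off from the basis $\{1,\ff\epsilon\}$ of $\fo_\ff$.

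\emph{The unit $n/n_0$.} Finally, I would handle $n/n_0\in\fo_F^\times$ by choosing the representative $\gamma$ of $\overline{\gamma}$ in \eqref{gamma} so that $n_0$ is the given totally positive $n$. This is possible by scaling $\gamma$ by a unit, using \eqref{eq:HF1} and Condition (II). Scaling $\gamma$ by a unit does not change the order $\fo_F[\gamma]$.
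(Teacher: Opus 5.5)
Your proposal is correct and is essentially the paper's argument. The paper writes $\gamma=\alpha+\beta\epsilon$ and reads off $\ff_{\fo_\gamma}=(\beta)$ and $\fD_{t^2-4n}=(\beta^{-2}(t^2-4n))$ from $\gamma-\overline{\gamma}=\beta(\epsilon-\overline{\epsilon})$, which is exactly your discriminant comparison $(t^2-4n)=\ff_\gamma^2\fD_{t^2-4n}$. It then concludes via $\fo_\gamma\subset\fo\iff\ff_\fo\mid\fF_{t^2-4n}$, Proposition \ref{prop:KL}, and \eqref{GHCN}.

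Two small points. The orders are only partially (not totally) ordered by divisibility of conductors, but the inclusion criterion you state is all that is needed. Your worry about primes above $2$ is unfounded, since $(\gamma-\overline{\gamma})^2=t^2-4n$ holds identically.
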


\begin{proof}
The first assertion follows immediately from Proposition \ref{prop:KL}. Indeed, if $\gamma$ is contained some lattice $\fo$, then $\gamma\in\fo_E$ since $\fo\subset\fo_E$ for any order $\fo\subset E$. When $\gamma\in\fo_E$, we define an order $\fo_{\gamma}$ attached to $\gamma$ by $\fo_{\gamma}=\fo_F\oplus\gamma\fo_F$. We put $\gamma=\alpha+\beta\epsilon$ $(\alpha,\beta\in\fo_F)$. Then we obtain $\ff_{\fo_\gamma}=(\beta)$ and
$\fD_{t^2-4n}=((\epsilon-\overline{\epsilon})^2)=(\beta^{-2}(t^2-4n))$. Hence we get $\fF_{t^2-4n}=\ff_{\fo_{\gamma}}$. Therefore, for any order $\fo\subset E$, $\fo_{\gamma}\subset\fo$ if and only if $\ff_{\fo}|\fF_{t^2-4n}$. This condition, Proposition \ref{prop:KL}, and \eqref{GHCN} reveal the desired formula.
\end{proof}

\begin{lemma}\label{lem:KL3}
Let $\gamma\in\overline{\rG}(F)$. Then,
$$
\overline{\Phi}(\overline{\gamma},\overline{h})=\Phi(\gamma,\overline{h}) \times \begin{cases}
    1 & \text{if $\tr(\gamma)\neq 0$,} \\
    2 & \text{if $\tr(\gamma)=0$.}
\end{cases}
$$
\end{lemma}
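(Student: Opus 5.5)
The two integrals differ only in the discrete group we quotient by. $\overline{\Phi}(\overline{\gamma},\overline{h})$ integrates over $\overline{\rG}_{\overline{\gamma}}(F)\bs\overline{\rG}(\A_f)$, while $\Phi(\gamma,\overline{h})$ integrates over $\overline{\rG_\gamma(F)}\bs\overline{\rG}(\A_f)$. Since $\overline{\rG_\gamma(F)}\subset\overline{\rG}_{\overline{\gamma}}(F)$ and the integrand $g\mapsto\overline{h}(g^{-1}\overline{\gamma}g)$ is left invariant under $\overline{\rG}_{\overline{\gamma}}(F)$, unfolding gives
\[
\Phi(\gamma,\overline{h})=[\overline{\rG}_{\overline{\gamma}}(F):\overline{\rG_\gamma(F)}]\,\overline{\Phi}(\overline{\gamma},\overline{h}),
\]
provided the index is finite. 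The integrand is nonnegative, so no convergence issues arise. The plan is therefore to compute this index, and I expect it to be $1$ or $2$ according to whether $\tr\gamma\neq0$ or $\tr\gamma=0$. This matches the shape of the statement, but the direction of the factor must be checked carefully against the convention in Theorem \ref{thm:traceformula}, where classes are taken in $\overline{\rG}(F)$.

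\textbf{Step 1: computing the centralizer.} An element $g\in\rG(F)$ maps into $\overline{\rG}_{\overline{\gamma}}(F)$ iff $g\gamma g^{-1}=c\gamma$ for some $c\in F^\times$. Taking determinants gives $c^2=1$, and taking traces gives $c\,t_0=t_0$.
\begin{itemize}
\item If $t_0\neq0$, then $c=1$, so $g\in\rG_\gamma(F)$ and the index is $1$.
\item If $t_0=0$, then $\gamma^2=-n_0$ is central, and conjugation by $\gamma$ induces the nontrivial Galois automorphism of $E=F(\gamma)$. There exists $w\in\rG(F)$ with $w\gamma w^{-1}=-\gamma$ (for instance, in the basis of Lemma \ref{multiplicationLem}, a matrix realizing $x\mapsto\overline{x}$ on $E\cong F^2$). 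In this case $\rG_\gamma(F)$ has index $2$ in its normalizer, giving index $2$ in $\overline{\rG}_{\overline{\gamma}}(F)$.
\end{itemize}
I also need $\overline{\rG}_{\overline{\gamma}}(F)=\overline{N}$ for the full normalizer $N$, which follows since $c=\pm1$ are the only options. Note that $\overline{\rG_\gamma(F)}=E^\times/F^\times$.

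\textbf{Step 2: reconciling the direction of the factor.} Unfolding as in the display gives $\overline{\Phi}=\Phi/2$ when $t_0=0$, which is the opposite of the stated formula. I would resolve this by recalling that in \eqref{eq:trace} the sum is over $\overline{\rG}(F)$-classes $\{\overline{\gamma}\}$, whereas Theorem \ref{thm:ES} sums over $t\in\fo_F$. For $t_0=0$, the two lifts $\gamma$ and $-\gamma$ of $\overline{\gamma}$ share the same $t=0$; for $t_0\ne0$, the classes of $\gamma$ and $-\gamma$ give distinct $t=\pm t_0$. I expect the intended reading of $\Phi(\gamma,\overline h)$ to involve the orbital integral over the larger quotient counted per lift, or equivalently the measure normalization $\int_{\overline{\rG}_{\overline{\gamma}}(F_\infty)}=1$ versus the one for $\overline{\rG_\gamma}(F_\infty)$. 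When $t_0=0$, the archimedean group $\overline{\rG}_{\overline{\gamma}}(F_\infty)$ has two components, so normalizing its total volume to $1$ halves the measure on the identity component. That contributes a factor $2$ in the direction of the statement, and combined with the index it gives exactly the stated case split.

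\textbf{Main obstacle.} The hardest part is this bookkeeping of normalizations: the archimedean volume normalization in \eqref{eq:JPhi}, the finite index of the global groups, and the convention that $\overline{\gamma}$ has the lifts $\pm\gamma$. I would first redo \eqref{eq:JPhi} with $\overline{\rG_\gamma}$ in place of $\overline{\rG}_{\overline{\gamma}}$ and track every factor of $2$ explicitly. Then I would check the final answer against Corollary \ref{cor:main1}: the term $\tfrac12H_F(4)$ arising from $t=0$ must come out with the correct weight.
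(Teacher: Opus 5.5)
Your unfolding identity and Step~1 are the paper's argument. Its proof only points to the difference between $\overline{\rG_\gamma(F)}$ and $\overline{\rG}_{\overline{\gamma}}(F)$ (citing Knightly--Li, p.~295). Your computation of that index is correct: it is $1$ if $\tr\gamma\neq0$ and $2$ if $\tr\gamma=0$, via $c^2=1$, $ct_0=t_0$ and the Galois involution $w$.

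The genuine error is Step~2. Both $\Phi(\gamma,\overline{h})$ and $\overline{\Phi}(\overline{\gamma},\overline{h})$ integrate the same left $\overline{\rG}_{\overline{\gamma}}(F)$-invariant function over quotients of $\overline{\rG}(\A_f)$ by discrete groups. They use the same Haar measure on $\overline{\rG}(\A_f)$, and no archimedean measure appears in either definition. Normalizing $\overline{\rG}_{\overline{\gamma}}(F_\infty)$ to total volume $1$ enters only \eqref{eq:JPhi}, where $\vol\cdot J$ is identified with $\overline{\Phi}$. It cannot change the relation between $\overline{\Phi}$ and $\Phi$. Even granting an extra factor $2$, combining it with the index factor $\tfrac12$ gives $1$, not $2$. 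So your display already proves
\[
\Phi(\gamma,\overline{h})=[\overline{\rG}_{\overline{\gamma}}(F):\overline{\rG_\gamma(F)}]\;\overline{\Phi}(\overline{\gamma},\overline{h}),
\]
that is, $\overline{\Phi}=\tfrac12\Phi$ when $\tr\gamma=0$. No bookkeeping can turn this into $\overline{\Phi}=2\Phi$.

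The right conclusion is that the printed statement has the factor on the wrong side. What is true, and what the rest of the paper uses, is $\Phi(\gamma,\overline{h})=\overline{\Phi}(\overline{\gamma},\overline{h})$ times $1$ or $2$ according as $\tr\gamma\neq0$ or $\tr\gamma=0$.

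Your own remark about the lifts $\pm\gamma$ shows this. With $\det\gamma$ normalized to $n$, the relevant $\overline{\rG}(F)$-classes correspond to pairs $\{t,-t\}$. So $\tfrac12\sum_{t\in\fo_F}$ in Theorem~\ref{thm:ES} gives weight $H_F(4n-t^2)$ to each class with $t\neq0$, but only $\tfrac12H_F(4n)$ to the class with $t=0$. Together with Proposition~\ref{prop:KL2}, this forces $\overline{\Phi}=\tfrac12\Phi$ there.

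The sanity check you proposed confirms it. Take $D=5$ and $\underline{\kappa}=(2,2)$, with $H_F(0)=\tfrac1{60}$, $H_F(4)=\tfrac12$, $H_F(3)=\tfrac13$ and $I(5)=\tfrac25$. Corollary~\ref{cor:main1} gives $\tfrac1{60}+\tfrac12\cdot\tfrac12+\tfrac13+\tfrac25-1=0$. Weighting the $t=0$ class by $2H_F(4)=1$ instead would give $\tfrac34$, which is not a dimension.

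So drop the archimedean ``reconciliation''. Your Step~1 plus the unfolding is a complete proof of the lemma in the corrected direction.
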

\begin{proof}
This fact follows from the difference between $\overline{\rG_\gamma(F)}$ and $\overline{\rG}_{\overline{\gamma}}(F)$, see \cite{KL06}*{p.295}.    
\end{proof}
\begin{remark}\label{rem:conjugacyclass}
Fix $n$, and let $\gamma_j \in \rG(F)$ $(j=1,2)$ be as in Proposition \ref{prop:KL}, 
so that the minimal polynomial of $\gamma_j$ over $F$ is given by 
$X^2 - t_j X + n$, with $4n - t_j \gg 0$.
By the Skolem--Noether theorem, note that $\gamma_1$ and $\gamma_2$ are $G(F)$-conjugate if and only if $t_1 = t_2$.
\end{remark}

\section{Proof of the resolvent trace formula}
For each Hecke eigenform $\phi\in S_{\underline{\kappa}}(\Gamma_1)$ and a totally positive prime element $p\in\fo_F$, we may write $\lambda_\phi(p)=\alpha_{\phi}(p)+\alpha_{\phi}(p)^{-1}$
where $(\alpha_{\phi}(p),\alpha_\phi(p)^{-1})$ is the Satake parameter of $\phi$ at $p$, which satisfies $|\alpha_{\phi}(p)|=1$ by Theorem \ref{thm:GRC}. Then one has $(\mathbb{T}_{p}'-(X+X^{-1}))^{-1}\phi=(\alpha_{\phi}(p)+\alpha_{\phi}(p)^{-1}-(X+X^{-1}))^{-1}\phi$ for $|X|<1$.
\begin{proof}[Proof of Theorem \ref{RTF}]
Taking into account the basis $\{\phi_{\underline{\kappa},j}\}_{1\le j\le d_{\underline{\kappa}}}$ of $S_{\underline{\kappa}}(\Gamma_1)$ and the following identity:
\begin{align}
\frac{1}{\alpha+\alpha^{-1}-(X+X^{-1})}=-\sum_{\nu=0}^{\infty}U_{\nu}\left(\frac{\alpha+\alpha^{-1}}{2}\right)X^{\nu+1},\quad |X|<|\alpha|,\label{Id5}
\end{align}
we find that the desired trace is equal to
\begin{align*}
    -\sum_{\nu=0}^{\infty}\tr \mathbb{T}_{p^{\nu}}\cdot X^{\nu+1}.
\end{align*}
Here, we use the well-known recursions:
\begin{align*}
\mathbb{T}_{p^{\nu+1}}'=\mathbb{T}_{p}'\mathbb{T}_{p^{\nu}}'-\mathbb{T}_{p^{\nu-1}}',
\quad U_{\nu+1}(x)=U_{1}(x)U_{\nu}(x)-U_{\nu-1}(x),\quad \nu\ge1,
\end{align*}
in the above argument. 
Recall that $q= N_{F/\Q}(p)$. 
An explicit formula for $\tr \mathbb{T}_{p^{\nu}}'=q^{-\frac{\nu}{2}}\tr \mathbb{T}_{p^{\nu}}$ is already given in Theorem \ref{thm:ES}, and all terms except for the second are expressed in a simple form. By computing a simple geometric series for the first term and applying the identity \eqref{Id5} again for the third term, the proof is completed. 
\end{proof}

\begin{proof}[Proof of Corollary \ref{Optimal}]
By a short calculation, Theorem \ref{RTF} reveals
\begin{align}
    \sum_{\nu=0}^{\infty}A_{\underline{\kappa}}(p^{\nu})(q^{-\frac{1}{2}}X)^{\nu}=&
    \sum_{j=1}^{d_{\underline{\kappa}}}\frac{2}{(X-\alpha_{\phi_{\underline{\kappa},j}}(p))(X-\alpha_{\phi_{\underline{\kappa},j}}(p)^{-1})}\label{RTF2}\\
    &+\frac{1}{24}B_{2,\chi_D}(\kappa_1-1)(\kappa_2-1)\frac{1}{1-q^{-1}X^2}\notag\\
    &+\begin{cases}
    \frac{2}{(1-q^{\frac{1}{2}}X)(1-q^{-\frac{1}{2}}X)}\quad &if\ \kappa_1=\kappa_2=2,\\
    0 & otherwise,
    \end{cases}\notag
\end{align}
if $|X|$ is sufficiently small. We may suppose $f(x,y)=U_{l_1}(x)U_{l_2}(y)$ with $(l_1,l_2)\in(2\Z_{\ge 0})^{2}$. 
By applying the formula \eqref{RTF2} for the case $\underline{\kappa}=(l_1,l_2)$ and using the result of Theorem \ref{thm:GRC}, it turns out that the radius of convergence of the power series on the left-hand side is $q^{-\frac{1}{2}}$ or $1$ according as 
$(l_1,l_2)$ equals $(0,0)$ or not. Hence the statement holds by Cauchy's estimate.
\end{proof}

\begin{proof}[Proof of Theorem \ref{Equid}]
To confirm the assertion, it suffices to show the following:
\begin{align}
\lim_{\nu\to\infty}\langle \mu_{p,\nu},U_{l_1}(x)U_{l_2}(y)\rangle=\delta_{0,l_1}\delta_{0,l_2},\quad (l_1,l_2)\in(2\Z_{\ge 0})^2.\label{Asymp}
\end{align}
When $(l_1,l_2)\ne(0,0)$, we verify $\langle \mu_{p,\nu},U_{l_1}(x)U_{l_2}(y)\rangle=o(q^{-\frac{\nu}{2}})$ $(\nu\to\infty)$ from Corollary \ref{Optimal} and hence it tends to $0$ as $\nu\to \infty$. Next, we assume $(l_1,l_2)=(0,0)$. 
Since the radius of convergence of the left-hand side on \eqref{RTF2} is $q^{-\frac{1}{2}}$, and the first two terms on the right-hand side are holomorphic on $|X|<1$, which clearly contains $|X|<q^{-\frac{1}{2}}$, we have the estimate
\begin{align}
    q^{-\frac{\nu}{2}}A_{(2,2)}(p^{\nu})
    \sim
    2\frac{q^{\frac{\nu+1}{2}}-q^{-\frac{\nu+1}{2}}}{q^{\frac{1}{2}}-q^{-\frac{1}{2}}}\quad (\nu\to\infty)\label{Asymp2}
\end{align}
by comparing the coefficients of $X^\nu$. $ $Here, we use the fact that the right-hand side coincides with the coefficient of $X^\nu$ in $\frac{2}{(1-q^{\frac{1}{2}}X)(1-q^{-\frac{1}{2}}X)}$. We deduce the estimate \eqref{Asymp} for $(l_1,l_2)=(0,0)$ from \eqref{Asymp2}.
\end{proof}

\section{On algorithms for class numbers and traces of Hecke operators}\label{sec:algorithm}
In this section we describe some details on numerically implementing Theorem \ref{thm:ES}. To compute all traces $T_{n,F}$ where $N_{F/\Q}(n) < A$, we first need to compute all relevant generalized Hurwitz class numbers of the form $H_F(4n-t^2)$, where $4n-t^2$ is totally positive. To compute these, we directly use equation \eqref{eq:HF3}, noting that, for each increase of $n$, we roughly need another $\sqrt{N_{F/\Q}(n)}$ class numbers. Once we have the relevant list of class numbers, computing the geometric side of Theorem \ref{thm:ES} is rather straightforward, since it is essentially a weighted sum of generalized Hurwitz class numbers, hence the main bulk of the computation of the traces is in computing the class group structure of $F(\sqrt{4n-t^2})$. 

For an implementation, we consider the fields $F = \Q(\sqrt{5})$ and $F = \Q(\sqrt{29})$. For both fields, we computed the trace for all totally positive prime element $p$ with norm $N_{F/\Q}(p) \leq 2\times10^6$, requiring $3,410,968$ and $3,045,914$ Hurwitz class numbers for each field respectively. As an initial test of correctness of the Hurwitz class number computations, we consider the weight $(2,2)$, as this has the simplest formula. For $\Q(\sqrt{5})$, the space of Hilbert modular forms of weight $(2,2)$ is empty, thus we should expect all our traces to be $0$. We verified this to be true for all totally positive prime elements $p$ with norm $N_{F/\Q}(p) \leq 2\times10^6$. For $\Q(\sqrt{29})$, the space of Hilbert modular forms of weight $(2,2)$ is $1$-dimensional and the one form is associated to the elliptic curve \cite{lmfdb:2.2.29.1-1.1-a-curve} by modularity. In this case, we compared the trace values against computing points on the elliptic curve, which is substantially faster, taking only a few seconds to compute all the same data. Again, we get full agreement for all totally positive prime elements $p$ with norm $N_{F/\Q}(p) \leq 2\times10^6$. Furthermore, we see that the data agrees with \cite{lmfdb:2.2.29.1-1.1-a} for all prime ideals of norm up to $9967$. 

For $F = \Q(\sqrt{5})$, we use the Hurwitz class number data to test two pairs of weights: parallel weight $(8,8)$ and non-parallel weight $(4,8)$ (which will also compute its conjugate $(8,4)$). Both of these spaces are $1$-dimensional, allowing us to compute the Hecke eigenvalues directly. With this, we computed all $\lambda(p)$ with $N_{F/\Q}(p) \leq 2\times10^6$, giving $148,931$ $\lambda(p)$ values. In this case, we needed the same $3,410,968$ generalized Hurwitz class numbers. Using this data, we created the Sato--Tate plots in Figures \ref{fig:SatoTatePlot_5_88} and \ref{fig:SatoTatePlot_5_48}, which is a theorem in this case due to \cite{SatoTate}, to test correctness of the size of the Hecke eigenvalues.

For $\Q(\sqrt{29})$, we also test the non-parallel weight $(2,6)$, which is also $1$-dimensional. Here, we computed all $\lambda(p)$ with $N_{F/\Q}(p) \leq 2\times10^6$, giving $148,837$ $\lambda(p)$ values, using the $3,045,914$ Hurwitz class numbers computed. Similarly, we generated Sato--Tate plots for the $\Q(\sqrt{29})$ data in Figures \ref{fig:SatoTatePlot_29_22} and \ref{fig:SatoTatePlot_29_26}.

We remark that we normalize the trace formula values so that they take values in $[-2,2]$ by considering the normalization of the Hecke operator $\bT_n'= n^{-\frac{\kappa_1-1}{2}}\sigma(n)^{-\frac{\kappa_2-1}{2}} T_{n,F}$ given in Theorem~\ref{thm:GRC}. In the Sato--Tate figures below, the plots on the left are showing the distribution of the traces of the Hecke operators $\mathbb{T}'_{p}$ over totally positive prime elements $p$, in comparison to the semi-circle distribution. The plots on the right are showing the distribution on the angles $\theta_{p} = \arccos(\lambda(p)/2)$. 

This code was implemented in \texttt{PARI} \cite{PARI2}. Since we wanted some numerical validity to these tests, we computed the class number unconditionally using \texttt{PARI}, which is a $O(\Delta_{F(\sqrt{n})}^{1/2})$ algorithm (where $\Delta_{F(\sqrt{n})}$ is the discriminant of the number field extension $F(\sqrt{n})$), which is reasonable for the small discriminants we are working with. The class number computations took roughly $3$ hours for $\Q(\sqrt{5})$ and roughly $7$ and a half hours for $\Q(\sqrt{29})$ on $16$ cores (42.2 and 119.1 core hours respectively). The timings for the trace formula values are below:
\begin{table}[ht]
    \centering
    \begin{tabular}{c|c|c|c}
        Weight&D&Real time taken&Core hours\\
        \hline
        (2,2)&5&31 m 44 s&8.5\\
        (4,8)&5&36 m 9 s&9.6\\
        (8,8)&5&41 m 35 s&11.1\\
        (2,2)&29&10 m 48 s&2.9\\
        (2,6)&29&13 m 37 s&3.6
    \end{tabular}
    \caption{All the real timings in this table were run on 16 cores. The $D=29$ terms are faster as the number of terms in the elliptic sums grows proportional to $N_{F/\Q}(p)/\sqrt{D}$.}
    \label{tab:tracetimetable}
\end{table}
For future work, we aim to produce a larger list of class numbers by generalizing the work of Jacobson, Ramachandran, and Williams \cite{JRW06} to remove the GRH condition to generalized Hurwitz class numbers, using the trace formula. This would then allow for a larger computation in the future of the Hecke eigenvalues of Hilbert modular forms. The code used in this paper is available at \cite{code2026}.

\begin{figure}[H]
    \centering
    \includegraphics[width=\linewidth]{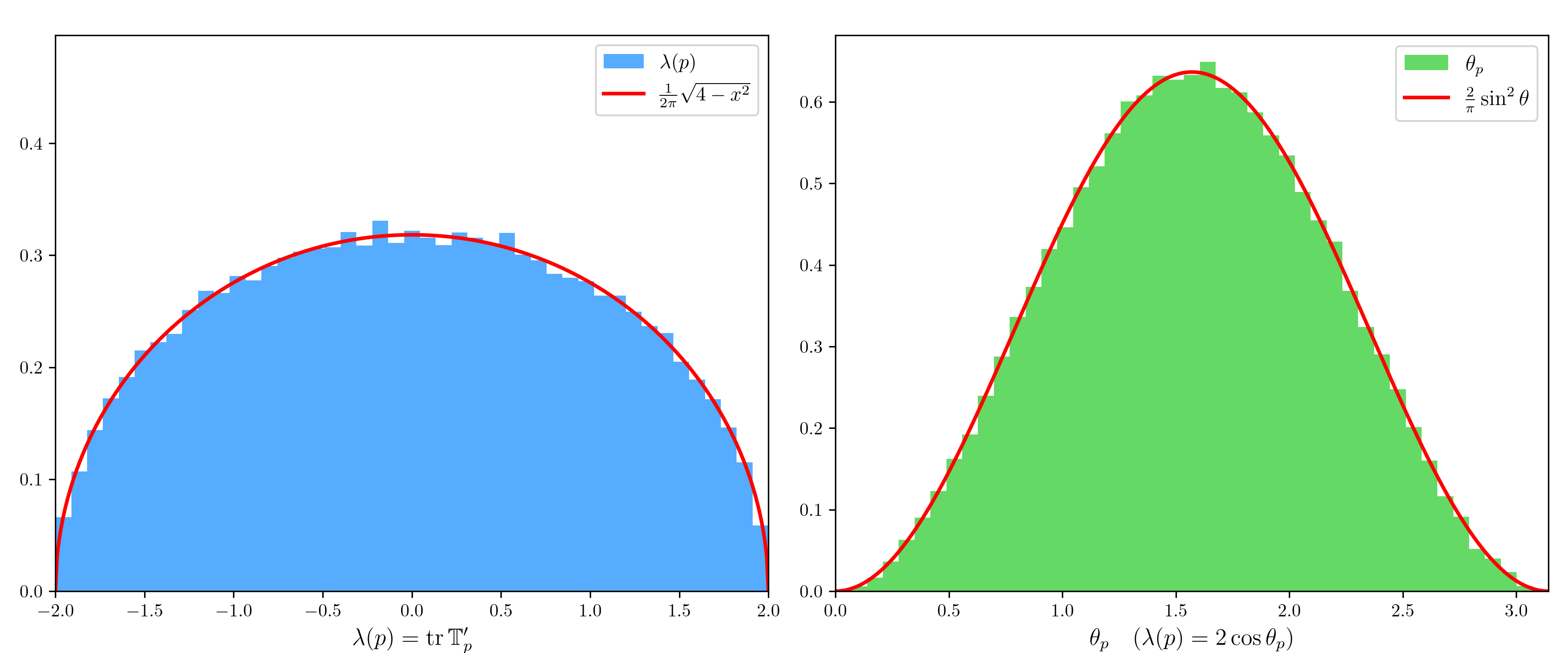}
    \caption{Sato--Tate plot for $\Q(\sqrt{29})$ and weight $(2,2)$. This has $148,837$ totally positive prime elements in $45$ bins.}
    \label{fig:SatoTatePlot_29_22}
\end{figure}
\begin{figure}[H]
    \centering
    \includegraphics[width=\linewidth]{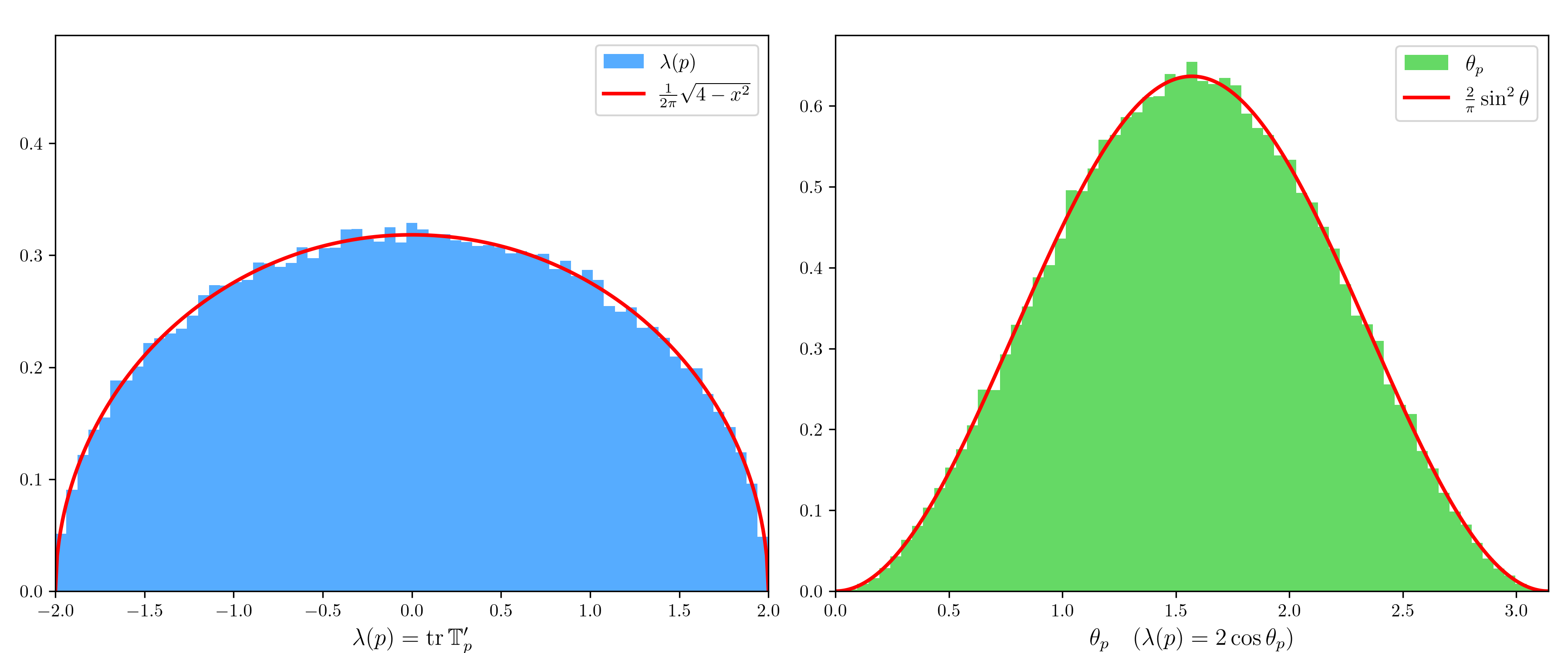}
    \caption{Sato--Tate plot for $\Q(\sqrt{29})$ and weight $(2,6)$. This has $148,837$ totally positive prime elements (giving here $297,674$ data points as we can consider both Galois embeddings) in $65$ bins.}
    \label{fig:SatoTatePlot_29_26}
\end{figure}
\begin{figure}[H]
    \centering
    \includegraphics[width=\linewidth]{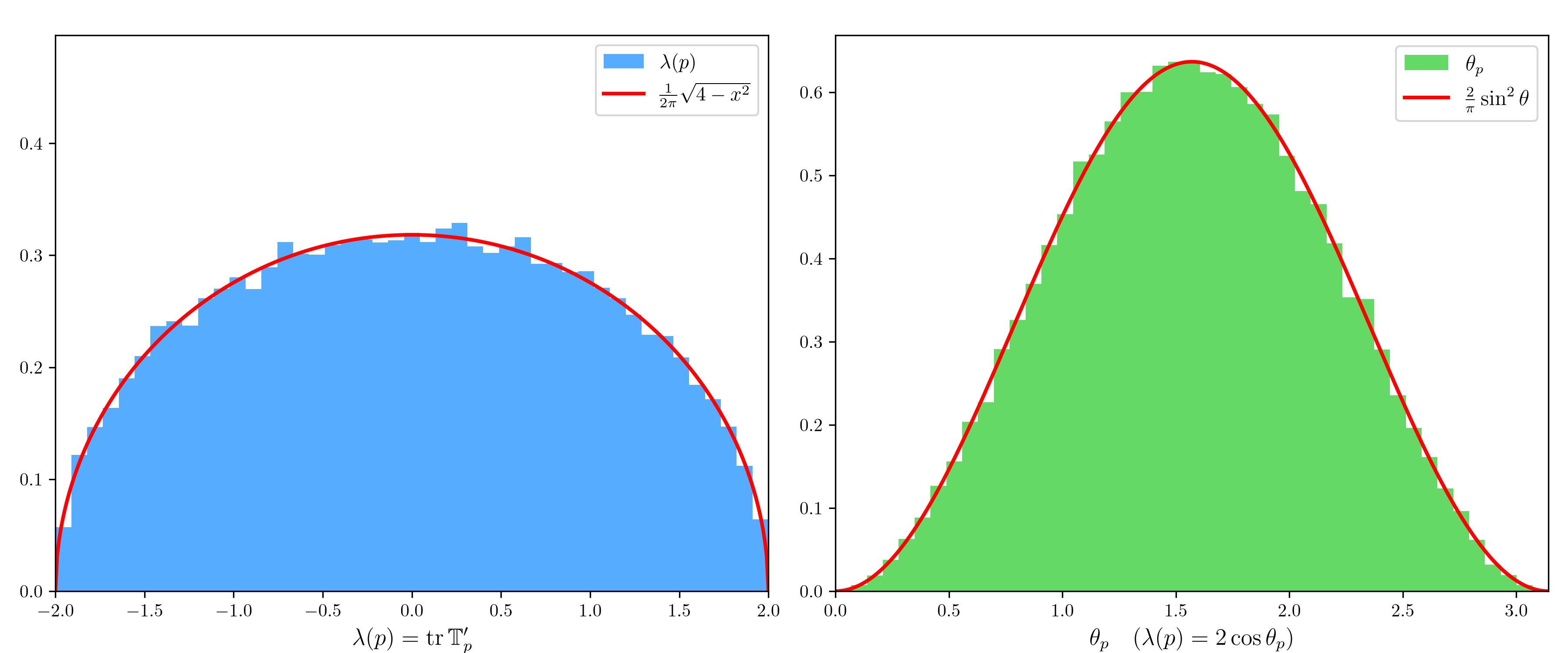}
    \caption{Sato--Tate plot for $\Q(\sqrt{5})$ and weight $(8,8)$. This has $148,931$ totally positive prime elements in $45$ bins.}
    \label{fig:SatoTatePlot_5_88}
\end{figure}
\begin{figure}[H]
    \centering
    \includegraphics[width=\linewidth]{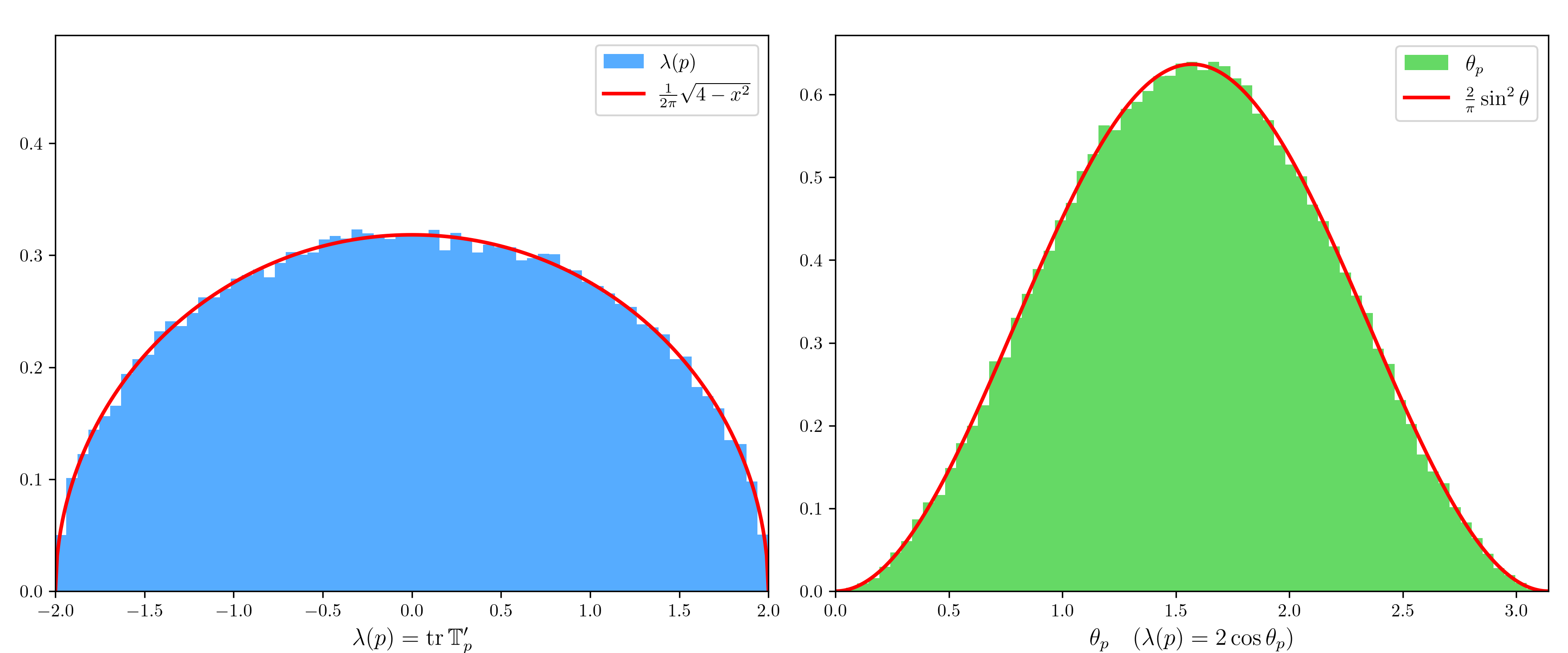}
    \caption{Sato--Tate plot for $\Q(\sqrt{5})$ and weight $(4,8)$. This has $148,931$ totally positive prime elements (giving here $297,862$ data points as we can consider both Galois embeddings) in $65$ bins.}
    \label{fig:SatoTatePlot_5_48}
\end{figure}

\medskip
{\em Acknowledgments.}
The second author has been supported by the National Research Foundation of Korea (NRF) grant funded by the Korea government (MSIP) (No. RS-2024-00415601 (G-BRL)). The third author is partially supported by JSPS Grant-in-Aid for Scientific Research (B) No.23K20785.

\bibliography{bibliography}

\begin{bibdiv}
\begin{biblist}

\bib{Art89}{article}{
      author={Arthur, James},
       title={The {$L^2$}-{L}efschetz numbers of {H}ecke operators},
        date={1989},
        ISSN={0020-9910,1432-1297},
     journal={Invent. Math.},
      volume={97},
      number={2},
       pages={257\ndash 290},
         url={https://doi.org/10.1007/BF01389042},
      review={\MR{1001841}},
}

\bib{Bla06}{incollection}{
      author={Blasius, Don},
       title={Hilbert modular forms and the {R}amanujan conjecture},
        date={2006},
   booktitle={Noncommutative geometry and number theory},
      series={Aspects Math.},
      volume={E37},
   publisher={Friedr. Vieweg, Wiesbaden},
       pages={35\ndash 56},
         url={https://doi.org/10.1007/978-3-8348-0352-8_2},
      review={\MR{2327298}},
}

\bib{SatoTate}{article}{
      author={Barnet-Lamb, Thomas},
      author={Gee, Toby},
      author={Geraghty, David},
       title={The {S}ato-{T}ate conjecture for {H}ilbert modular forms},
        date={2011},
        ISSN={0894-0347,1088-6834},
     journal={J. Amer. Math. Soc.},
      volume={24},
      number={2},
       pages={411\ndash 469},
         url={https://doi.org/10.1090/S0894-0347-2010-00689-3},
      review={\MR{2748398}},
}

\bib{Bou04}{book}{
      author={Bourbaki, Nicolas},
       title={Integration. {II}. {C}hapters 7--9},
      series={Elements of Mathematics (Berlin)},
   publisher={Springer-Verlag, Berlin},
        date={2004},
        ISBN={3-540-20585-3},
        note={Translated from the 1963 and 1969 French originals by Sterling K. Berberian},
      review={\MR{2098271}},
}

\bib{CD90}{article}{
      author={Clozel, Laurent},
      author={Delorme, Patrick},
       title={Le th{\'e}or{\`e}me de {Paley}-{Wiener} invariant pour les groupes de {Lie} r{\'e}ductifs. {II}. ({The} invariant {Paley}-{Wiener} theorem for reductive {Lie} groups. {II})},
    language={French},
        date={1990},
        ISSN={0012-9593},
     journal={Ann. Sci. {\'E}c. Norm. Sup{\'e}r. (4)},
      volume={23},
      number={2},
       pages={193\ndash 228},
         url={https://eudml.org/doc/82271},
}

\bib{Coh75}{article}{
      author={Cohen, Henri},
       title={Sums involving the values at negative integers of {$L$}-functions of quadratic characters},
        date={1975},
        ISSN={0025-5831,1432-1807},
     journal={Math. Ann.},
      volume={217},
      number={3},
       pages={271\ndash 285},
         url={https://doi.org/10.1007/BF01436180},
      review={\MR{382192}},
}

\bib{Dat93}{incollection}{
      author={Datskovsky, Boris~A.},
       title={A mean-value theorem for class numbers of quadratic extensions},
    language={English},
        date={1993},
   booktitle={A tribute to emil grosswald: number theory and related analysis},
   publisher={Providence, RI: American Mathematical Society},
       pages={179\ndash 242},
}

\bib{Eic55}{article}{
      author={Eichler, Martin},
       title={On the class of imaginary quadratic fields and the sums of divisors of natural numbers},
        date={1955},
        ISSN={0019-5839,2455-6475},
     journal={J. Indian Math. Soc. (N.S.)},
      volume={19},
       pages={153\ndash 180},
      review={\MR{80769}},
}

\bib{Eic57}{article}{
      author={Eichler, M.},
       title={Eine {V}erallgemeinerung der {A}belschen {I}ntegrale},
        date={1957},
        ISSN={0025-5874,1432-1823},
     journal={Math. Z.},
      volume={67},
       pages={267\ndash 298},
         url={https://doi.org/10.1007/BF01258863},
      review={\MR{89928}},
}

\bib{Gar90}{book}{
      author={Garrett, Paul~B.},
       title={Holomorphic {H}ilbert modular forms},
      series={The Wadsworth \& Brooks/Cole Mathematics Series},
   publisher={Wadsworth \& Brooks/Cole Advanced Books \& Software, Pacific Grove, CA},
        date={1990},
        ISBN={0-534-10344-8},
      review={\MR{1008244}},
}

\bib{Hij74}{article}{
      author={Hijikata, H.},
       title={Explicit formula of the traces of {Hecke} operators for {{\(\Gamma_0(N)\)}}},
    language={English},
        date={1974},
        ISSN={0025-5645},
     journal={J. Math. Soc. Japan},
      volume={26},
       pages={56\ndash 82},
}

\bib{Hur85}{article}{
      author={Hurwitz, Adolf},
       title={Ueber {R}elationen zwischen {C}lassenanzahlen bin\"arer quadratischer {F}ormen von negativer {D}eterminante},
        date={1885},
        ISSN={0025-5831,1432-1807},
     journal={Math. Ann.},
      volume={25},
      number={2},
       pages={157\ndash 196},
         url={https://doi.org/10.1007/BF01446402},
      review={\MR{1510301}},
}

\bib{HW18}{book}{
      author={Hoffmann, Werner},
      author={Wakatsuki, Satoshi},
       title={On the geometric side of the {Arthur} trace formula for the symplectic group of rank 2},
    language={English},
      series={Mem. Am. Math. Soc.},
   publisher={Providence, RI: American Mathematical Society (AMS)},
        date={2018},
      volume={1224},
        ISBN={978-1-4704-3102-0; 978-1-4704-4825-7},
}

\bib{HZ76}{article}{
      author={Hirzebruch, F.},
      author={Zagier, D.},
       title={Intersection numbers of curves on {H}ilbert modular surfaces and modular forms of {N}ebentypus},
        date={1976},
        ISSN={0020-9910,1432-1297},
     journal={Invent. Math.},
      volume={36},
       pages={57\ndash 113},
         url={https://doi.org/10.1007/BF01390005},
      review={\MR{453649}},
}

\bib{Ish74}{article}{
      author={Ishikawa, Hirofumi},
       title={On trace of {H}ecke operators for discontinuous groups operating on the product of the upper half planes},
        date={1974},
        ISSN={0040-8980},
     journal={J. Fac. Sci. Univ. Tokyo Sect. IA Math.},
      volume={21},
       pages={357\ndash 376},
      review={\MR{364105}},
}

\bib{Ish88}{article}{
      author={Ishikawa, Hirofumi},
       title={A table of the dimensions of the {H}ilbert modular type cusp forms over real quadratic fields},
        date={1988},
        ISSN={0386-2194},
     journal={Proc. Japan Acad. Ser. A Math. Sci.},
      volume={64},
      number={3},
       pages={84\ndash 87},
         url={http://projecteuclid.org/euclid.pja/1195513362},
      review={\MR{952813}},
}

\bib{JRW06}{incollection}{
      author={Jacobson, Michael~J.},
      author={Ramachandran, Shantha},
      author={Williams, Hugh~C.},
       title={Numerical results on class groups of imaginary quadratic fields},
    language={English},
        date={2006},
   booktitle={Algorithmic number theory. 7th international symposium, ants-vii, berlin, germany, july 23--28, 2006. proceedings.},
   publisher={Berlin: Springer},
       pages={87\ndash 101},
}

\bib{KL06}{book}{
      author={Knightly, Andrew},
      author={Li, Charles},
       title={Traces of {H}ecke operators},
      series={Mathematical Surveys and Monographs},
   publisher={American Mathematical Society, Providence, RI},
        date={2006},
      volume={133},
        ISBN={978-0-8218-3739-9; 0-8218-3739-7},
         url={https://doi.org/10.1090/surv/133},
      review={\MR{2273356}},
}

\bib{Kro60}{article}{
      author={Kronecker, L.},
       title={Ueber die {A}nzahl der verschiedenen {C}lassen quadratischer {F}ormen von negativer {D}eterminante},
        date={1860},
        ISSN={0075-4102,1435-5345},
     journal={J. Reine Angew. Math.},
      volume={57},
       pages={248\ndash 255},
         url={https://doi.org/10.1515/crll.1860.57.248},
      review={\MR{1579129}},
}

\bib{KTW22}{article}{
      author={Kim, Henry~H.},
      author={Tsuzuki, Masao},
      author={Wakatsuki, Satoshi},
       title={The {Shintani} double zeta functions},
    language={English},
        date={2022},
        ISSN={0933-7741},
     journal={Forum Math.},
      volume={34},
      number={2},
       pages={469\ndash 505},
}

\bib{Lan76}{book}{
      author={Lang, Serge},
       title={Introduction to modular forms},
      series={Grundlehren der Mathematischen Wissenschaften},
   publisher={Springer-Verlag, Berlin-New York},
        date={1976},
      volume={No. 222},
      review={\MR{429740}},
}

\bib{lmfdb:4.0.125.1}{misc}{
      author={{LMFDB Collaboration}, The},
       title={The {L}-functions and modular forms database, {H}ome page of number field \texttt{4.0.125.1}},
         how={\mbox{\url{https://www.lmfdb.org/NumberField/4.0.125.1}}},
        date={2026},
        note={[Online; accessed 19 June 2026]},
}

\bib{lmfdb:2.2.29.1-1.1-a-curve}{misc}{
      author={{LMFDB Collaboration}, The},
       title={The {L}-functions and modular forms database, {H}ome page of the {E}lliptic curve isogeny class 1.1-a over number field $\mathbb{Q}(\sqrt{29}$)},
         how={\mbox{\url{https://www.lmfdb.org/EllipticCurve/2.2.29.1/1.1/a/}}},
        date={2026},
        note={[Online; accessed 19 June 2026]},
}

\bib{lmfdb:2.2.29.1-1.1-a}{misc}{
      author={{LMFDB Collaboration}, The},
       title={The {L}-functions and modular forms database, {H}ome page of the {H}ilbert cusp form \texttt{2.2.29.1-1.1-a}},
         how={\mbox{\url{https://www.lmfdb.org/ModularForm/GL2/TotallyReal/2.2.29.1/holomorphic/2.2.29.1-1.1-a}}},
        date={2026},
        note={[Online; accessed 19 June 2026]},
}

\bib{SO94}{article}{
      author={Louboutin, St\'ephane},
      author={Okazaki, Ryotaro},
       title={Determination of all non-normal quartic {CM}-fields and of all non-abelian normal octic {CM}-fields with class number one},
        date={1994},
        ISSN={0065-1036,1730-6264},
     journal={Acta Arith.},
      volume={67},
      number={1},
       pages={47\ndash 62},
         url={https://doi.org/10.4064/aa-67-1-47-62},
      review={\MR{1292520}},
}

\bib{Mer14}{article}{
      author={Mertens, Michael~H.},
       title={Mock modular forms and class number relations},
        date={2014},
        ISSN={2522-0144,2197-9847},
     journal={Res. Math. Sci.},
      volume={1},
       pages={Art. 6, 16},
         url={https://doi.org/10.1186/2197-9847-1-6},
      review={\MR{3377995}},
}

\bib{Neu99}{book}{
      author={Neukirch, J\"urgen},
       title={Algebraic number theory},
      series={Grundlehren der mathematischen Wissenschaften [Fundamental Principles of Mathematical Sciences]},
   publisher={Springer-Verlag, Berlin},
        date={1999},
      volume={322},
        ISBN={3-540-65399-6},
         url={https://doi.org/10.1007/978-3-662-03983-0},
        note={Translated from the 1992 German original and with a note by Norbert Schappacher, With a foreword by G. Harder},
      review={\MR{1697859}},
}

\bib{Oka02}{article}{
      author={Okada, Kaoru},
       title={Hecke eigenvalues for real quadratic fields},
    language={English},
        date={2002},
        ISSN={1058-6458},
     journal={Exp. Math.},
      volume={11},
      number={3},
       pages={407\ndash 426},
         url={https://eudml.org/doc/53262},
}

\bib{PR94}{book}{
      author={Platonov, Vladimir},
      author={Rapinchuk, Andrei},
       title={Algebraic groups and number theory},
      series={Pure and Applied Mathematics},
   publisher={Academic Press, Inc., Boston, MA},
        date={1994},
      volume={139},
        ISBN={0-12-558180-7},
        note={Translated from the 1991 Russian original by Rachel Rowen},
      review={\MR{1278263}},
}

\bib{Pre68}{article}{
      author={Prestel, Alexander},
       title={Die elliptischen {F}ixpunkte der {H}ilbertschen {M}odulgruppen},
        date={1968},
        ISSN={0025-5831,1432-1807},
     journal={Math. Ann.},
      volume={177},
       pages={181\ndash 209},
         url={https://doi.org/10.1007/BF01350863},
      review={\MR{228439}},
}

\bib{Sai84}{article}{
      author={Saito, Hiroshi},
       title={On an operator {{\(U_{\chi}\)}} acting on the space of {Hilbert} cusp forms},
    language={English},
        date={1984},
        ISSN={0023-608X},
     journal={J. Math. Kyoto Univ.},
      volume={24},
       pages={285\ndash 303},
}

\bib{Sel56}{article}{
      author={Selberg, A.},
       title={Harmonic analysis and discontinuous groups in weakly symmetric {R}iemannian spaces with applications to {D}irichlet series},
        date={1956},
        ISSN={0019-5839,2455-6475},
     journal={J. Indian Math. Soc. (N.S.)},
      volume={20},
       pages={47\ndash 87},
      review={\MR{88511}},
}

\bib{Ser97}{article}{
      author={Serre, Jean-Pierre},
       title={R\'epartition asymptotique des valeurs propres de l'op\'erateur de {H}ecke {$T_p$}},
        date={1997},
        ISSN={0894-0347,1088-6834},
     journal={J. Amer. Math. Soc.},
      volume={10},
      number={1},
       pages={75\ndash 102},
         url={https://doi.org/10.1090/S0894-0347-97-00220-8},
      review={\MR{1396897}},
}

\bib{Shi63}{article}{
      author={Shimizu, Hideo},
       title={On discontinuous groups operating on the product of the upper half planes},
        date={1963},
        ISSN={0003-486X},
     journal={Ann. of Math. (2)},
      volume={77},
       pages={33\ndash 71},
         url={https://doi.org/10.2307/1970201},
      review={\MR{145106}},
}

\bib{Shi65}{article}{
      author={Shimizu, Hideo},
       title={On zeta functions of quaternion algebras},
        date={1965},
        ISSN={0003-486X},
     journal={Ann. of Math. (2)},
      volume={81},
       pages={166\ndash 193},
         url={https://doi.org/10.2307/1970389},
      review={\MR{171771}},
}

\bib{ST20}{article}{
      author={Sugiyama, Shingo},
      author={Tsuzuki, Masao},
       title={Optimal estimates for an average of {H}urwitz class numbers},
        date={2020},
        ISSN={1382-4090,1572-9303},
     journal={Ramanujan J.},
      volume={52},
      number={1},
       pages={91\ndash 104},
         url={https://doi.org/10.1007/s11139-019-00146-z},
      review={\MR{4083224}},
}

\bib{ST61}{book}{
      author={Shimura, Goro},
      author={Taniyama, Yutaka},
       title={Complex multiplication of abelian varieties and its applications to number theory},
      series={Publications of the Mathematical Society of Japan},
   publisher={Mathematical Society of Japan, Tokyo},
        date={1961},
      volume={6},
      review={\MR{125113}},
}

\bib{Str14}{article}{
      author={Streng, Marco},
       title={Computing {I}gusa class polynomials},
        date={2014},
        ISSN={0025-5718,1088-6842},
     journal={Math. Comp.},
      volume={83},
      number={285},
       pages={275\ndash 309},
         url={https://doi.org/10.1090/S0025-5718-2013-02712-3},
      review={\MR{3120590}},
}

\bib{Su16}{article}{
      author={Su, Ren-He},
       title={Eisenstein series in the {K}ohnen plus space for {H}ilbert modular forms},
        date={2016},
        ISSN={1793-0421,1793-7310},
     journal={Int. J. Number Theory},
      volume={12},
      number={3},
       pages={691\ndash 723},
         url={https://doi.org/10.1142/S1793042116500469},
      review={\MR{3477416}},
}

\bib{Tak86}{article}{
      author={Takase, Koichi},
       title={On the trace formula of the {H}ecke operators and the special values of the second {$L$}-functions attached to the {H}ilbert modular forms},
        date={1986},
        ISSN={0025-2611,1432-1785},
     journal={Manuscripta Math.},
      volume={55},
      number={2},
       pages={137\ndash 170},
         url={https://doi.org/10.1007/BF01168682},
      review={\MR{833241}},
}

\bib{PARI2}{manual}{
      author={{The PARI~Group}},
       title={{PARI/GP version \texttt{2.17.3}}},
     address={Univ. Bordeaux},
        date={2026},
        note={available from \url{http://pari.math.u-bordeaux.fr/}},
}

\bib{Zag75}{article}{
      author={Zagier, Don},
       title={Nombres de classes et formes modulaires de poids {$3/2$}},
        date={1975},
        ISSN={0151-0509},
     journal={C. R. Acad. Sci. Paris S\'er. A-B},
      volume={281},
      number={21},
       pages={Ai, A883\ndash A886},
      review={\MR{429750}},
}

\bib{Zag77}{incollection}{
      author={Zagier, D.},
       title={Correction to: ``{T}he {E}ichler-{S}elberg trace formula on {${\rm SL}\sb{2}({\bf Z})$}'' ({\it {i}ntroduction to modular forms}, {A}ppendix, pp. 44--54, {S}pringer, {B}erlin, 1976) by {S}. {L}ang},
        date={1977},
   booktitle={Modular functions of one variable, {VI} ({P}roc. {S}econd {I}nternat. {C}onf., {U}niv. {B}onn, {B}onn, 1976)},
      series={Lecture Notes in Math.},
      volume={Vol. 627},
   publisher={Springer, Berlin-New York},
       pages={171\ndash 173},
      review={\MR{480354}},
}

\end{biblist}
\end{bibdiv}
\bibliographystyle{amsplain}
\end{document}